\newtheorem{theorem}{Theorem}[section]
\newtheorem{ass}[theorem]{Assumption}
\newtheorem{alg}[theorem]{Algorithm}
\newtheorem{lemma}[theorem]{Lemma}
\theoremstyle{definition}
\newtheorem{example}[theorem]{Example}
\theoremstyle{remark}
\newtheorem{remark}[theorem]{Remark}
\numberwithin{equation}{section}
\begin{document}
\title
[Strongly convergent forward-reflected-backward splitting methods]
 {Strong Convergence of Forward-Reflected-Backward Splitting Methods for Solving Monotone Inclusions with Applications to Image Restoration and Optimal Control}

\author{Chinedu Izuchukwu$^1$, Simeon Reich$^{2}$, Yekini Shehu$^3$,  Adeolu Taiwo$^{4}$}

\keywords{Forward-reflected-backward method; inertial method; Halpern's iteration; viscosity iteration; monotone inclusion; strong convergence. \\
{\rm 2010} {\it Mathematics Subject Classification}: 47H09; 47H10; 49J20; 49J40\\\\
{$^{1, 2,4}$Department of Mathematics, The Technion -- Israel Institute of Technology, 32000 Haifa, Israel.}\\
{$^{3}$Department of Mathematics, Zhejiang Normal University, Jinhua 321004, People’s Republic of China.}\\
{$^{1}\mbox{izuchukwu}\_c$@yahoo.com; chi.izuchukw@campus.technion.ac.il}\\
{$^2$sreich@technion.ac.il}\\
{$^{3}$yekini.shehu@zjnu.edu.cn}\\
{$^{4}$ taiwoa@campus.technion.ac.il; taiwo.adeolu@yahoo.com}}
\begin{abstract}
In this paper, we propose and study several strongly convergent versions of the forward-reflected-backward splitting method of Malitsky and Tam for finding a zero of the sum of two monotone operators in a real Hilbert space. Our proposed methods only require one forward evaluation of the single-valued operator and one backward evaluation of the set-valued operator at each iteration; a feature that is absent in many other available strongly convergent splitting methods in the literature. We also develop inertial versions of our methods and strong convergence results are obtained for these methods when the set-valued operator is maximal monotone and the single-valued operator is Lipschitz continuous and monotone. Finally, we discuss some examples from image restorations and optimal control regarding the implementations of our methods in comparisons with known related methods in the literature.
\end{abstract}

\maketitle

\section{Introduction}\label{Se1}
\noindent Let $\mathcal{H}$ be a real Hilbert space. We are interested in the monotone inclusion problem
\begin{eqnarray}\label{MIP}
\mbox{find} ~\widehat{w}\in \mathcal{H}~\mbox{such that}~0\in (S+T)\widehat{w},
\end{eqnarray}
where $S:\mathcal{H}\to2^{\mathcal{H}}$ and $T:\mathcal{H}\to {\mathcal{H}}$ are monotone operators. We denote the solution set of \eqref{MIP} by $(S+T)^{-1}(0)$. Problem \eqref{MIP} naturally includes many important optimization problems such as variational inequalities, minimization problems, linear inverse problems, saddle-point problems, fixed point problems, split feasibility problems, Nash equilibrium problems in noncooperative games, and many more (see \cite{IRS4}). Also, many problems in signal processing, image recovery and machine learning can be formulated as Problem \eqref{MIP}.

\noindent Some of the most commonly used methods for solving the monotone inclusion problem \eqref{MIP} are various splitting methods. These methods involve tackling each of the two operators ($S$ and $T$) separately (by means of forward evaluation of the single-valued operator and backward evaluation of the set-valued operator) rather than their sum. A popular splitting method for solving Problem \eqref{MIP} is the forward-backward splitting method \cite{28Y, 35Y}
\begin{eqnarray}\label{FB}
w_{n+1}=(I_{\mathcal{H}}+\bar{\delta} S)^{-1}(w_n-\bar{\delta}  Tw_n),~n\geq 1,
\end{eqnarray}
where $I_{\mathcal{H}}$ is the identity operator on $\mathcal{H}$ and $\bar{\delta} >0$ is a constant. Method \eqref{FB} requires at each iteration only one forward evaluation of $T$ and one backward evaluation of $S$. This method is known to converge weakly to a solution of Problem \eqref{MIP} when $T$ is $\mathsf{L}^{-1}$-cocoercive,  $\bar{\delta}\in (0, 2\mathsf{L}^{-1})$, $S$ is maximal monotone and $(S+T)^{-1}(0)$ is nonempty. Apart from the cocoercivity assumption on $T$ (which is strict), other assumptions which guarantee the convergence of \eqref{FB} are the strong monotonicity of $S+T$ \cite{12M} or the use of a backtracking technique \cite{6M} (which are also strict).

\noindent In order to weaken the cocoercivity assumption on $T$, the following forward-backward-forward splitting method was introduced in \cite{Tseng}:
\begin{eqnarray}\label{FBF}
\begin{cases}
v_n=(I_{\mathcal{H}}+\bar{\delta} S)^{-1}(w_n-\bar{\delta} Tw_n),\\
w_{n+1}=v_n-\bar{\delta} Tv_n+\bar{\delta} Tw_n,~n\geq 1.
\end{cases}
\end{eqnarray}
This method converges weakly to a solution of \eqref{MIP} under the assumptions that $T$ is monotone and $\mathsf{L}$-Lipschitz continuous, $\bar{\delta}\in (0, \mathsf{L}^{-1})$, $S$ is maximal monotone and $(S+T)^{-1}(0)\neq \emptyset$.
Note that monotonicity and Lipschitz continuity are much weaker assumptions than cocoercivity and strong monotonicity \cite[Remark 2.1]{IRS1}. However, the forward-backward-forward splitting method \eqref{FBF} requires an additional forward evaluation of $T$ (that is, it involves two forward evaluations of $T$ and one backward evaluation of $S$ per iteration). This might affect the efficiency of the method especially when it is applied to solving large-scale optimization problems and optimization problems emanating from optimal control theory, where computations involving pertinent operators are often very expensive (see \cite{19Hv}).\\

\noindent
 Strongly convergent variants of both the forward-backward splitting method \eqref{FB} and the forward-backward-forward splitting method \eqref{FBF} have been studied in the literature recently in \cite{FBFS4, FBFS1, FBFS3, FBFS2, FBS2}. In \cite{FBFS4}, four modifications of the inertial forward-backward
splitting method for solving monotone inclusion problem \eqref{MIP} are discussed. For instance, the authors studied the following inertial viscosity-type forward-backward-forward splitting method  \cite[Algorithm 3.4]{FBFS4}:
\begin{eqnarray}\label{SFBF}
\begin{cases}
u_n=w_n + \vartheta_n(w_n-w_{n-1}),\\
v_n=(I_{\mathcal{H}}+\bar{\delta} S)^{-1}(u_n-\bar{\delta} Tu_n),\\
z_n=v_n-\bar{\delta} Tv_n+\bar{\delta} Tw_n,\\
w_{n+1}=\sigma_n Uw_n+(1-\sigma_n)z_n,~n\geq 1,
\end{cases}
\end{eqnarray}
where $U$ is a contraction mapping  and $\vartheta_n \in [0,1)$ is the inertial parameter. It was shown in \cite[Theorem 3.4]{FBFS4} that $\{w_n\}$ converges strongly to a point in $(S+T)^{-1}(0)$. However, observe that \cite[Algorithm 3.4]{FBFS4} and other strongly convergent splitting methods in \cite{FBFS4, FBFS1, XU, FBFS3, FBS1, FBFS2, FBS2} have the same drawback of at least two forward evaluations of the single-valued operator $T$ per iteration or the assumption that $T$ is cocoercive. \\

\noindent In order to overcome this disadvantage inherent in the forward-backward-forward splitting method \eqref{FBF}, Malitsky and Tam \cite{MT} proposed the following forward-reflected-backward splitting method:
\begin{eqnarray}\label{FRB}
w_{n+1}=(I_{\mathcal{H}}+\bar{\delta} S)^{-1}(w_n-2\bar{\delta} Tw_n+\bar{\delta} Tw_{n-1}),~n\geq 1,
\end{eqnarray}
where $\bar{\delta}\in (0, \frac{1}{2}\mathsf{L}^{-1})$. Method \eqref{FRB} converges weakly to a solution of \eqref{MIP} under the same assumptions as the forward-backward-forward splitting method \eqref{FBF}, but has the same computational structure as the forward-backward splitting method \eqref{FB}. In other words, it converges weakly when $T$ is monotone and Lipschitz continuous, and $S$ is maximal monotone, but only requires one forward evaluation of $T$ and one backward evaluation of $S$. See \cite{MT3, MT1, GRAAL, MT2} and references therein for several weakly convergent variants of Method \eqref{FRB}.\\

\noindent
It is worth noting that in infinite dimensional spaces, strong convergence results are much more desirable than weak convergence results for iterative algorithms. However, due to the computational structure of Method \eqref{FRB}, its strongly convergent variants are very rare in the literature (see, e.g., \cite{HV}). Note that the strongly convergent method proposed in \cite{HV} is of a hybrid projection type and it tackles the particular case where $S$ in the inclusion problem \eqref{MIP} is the normal cone of a nonempty, closed and convex set.
 
\noindent
\textbf{Our Contributions.} Motivated by the above-mentioned results, our contributions in this paper are summarized below.
\begin{itemize}
  \item We introduce new strongly convergent splitting methods for solving the monotone inclusion problem \eqref{MIP}. In similar computational structure as the forward-reflected-backward splitting method \eqref{FRB}, our proposed methods only require one forward evaluation of $T$ and one backward evaluation of $S$ at each iteration, and strong convergence results are obtained without assuming strong monotonicity of either $S$ or $T$. Thus, our results are strongly convergent versions of the weak convergence results obtained in \cite{MT3, MT1, MT}.
  \item Our first method employs the anchored (or Halpern type) extrapolation technique for a general maximal monotone operator $S$ and our convergence analysis is entirely different from that given in \cite{HV}. Moreover, it is known that besides ensuring strong convergence, the anchored extrapolation step improves the convergence rate of iterative methods (see \cite{XU, 70} for details). Viscosity-type approximations are also discussed and strong convergence results are given under mild assumptions.
      \item To further increase the convergence speed of our methods, we propose inertial variants of both the anchored-type and the viscosity-type approximations, and establish their corresponding strong convergent results.
\item We conduct numerical experiments arising from image restoration and optimal control problems in order to illustrate the validity of our proposed methods. Results from the numerical tests show that our methods are efficient, computationally inexpensive and  outperform related methods in the literature.
\end{itemize}

\noindent
\textbf{Organization.} The rest of our paper is organized as follows: Section \ref{Se2} contains basic definitions and results. In Section \ref{Se3} we present the first method of this paper and establish its strong convergence. In Section \ref{Se4} we present an inertial variant of the first method and obtain some convergence results for it. In Section \ref{VIS} we study the viscosity and inertial viscosity variants of our proposed methods.
In Section \ref{Se5} we present several numerical examples which illustrate the implementation of our algorithms in comparison with known methods in the literature. We then make some concluding remarks and suggest possible directions for future research in Section \ref{Se6}.

\hfill

\section{Preliminaries}\label{Se2}
\noindent Throughout this work, $\langle\cdot , \cdot\rangle$ and $\|\cdot \|$ are the inner product and norm, respectively, of a real Hilbert space $\mathcal{H}$.

\noindent The operator $T: \mathcal{H}\rightarrow \mathcal{H}$ is called {$\mathsf{L}$-cocoercive (or inverse strongly monotone)} if there exists  $\mathsf{L}>0$ such that
	\begin{align*}
	\langle T{a}-T{b},{a}-{b}\rangle\geq \mathsf{L}\|T{a}-T{b}\|^2~ \forall a,b \in \mathcal{H},
	\end{align*}
and {monotone} if
	\begin{align*}\langle Ta-Tb,a-b\rangle\geq 0 \; ~ \forall a,b \in \mathcal{H}.
	\end{align*}
The operator $T$ is called $\mathsf{L}$-{Lipschitz continuous} if there exists $\mathsf{L}>0$ such that
	\begin{align*}
	\|Ta-Tb\|\leq \mathsf{L}\|a-b\|~ \forall a,b\in \mathcal{H}.
	\end{align*}
If $\mathsf{L}\in [0, 1)$ then $T$ is a contraction.	
	
\noindent Let $S$ be a set-valued operator $S:\mathcal{H}\rightarrow 2^{\mathcal{H}}$, then $S$ is said to be {monotone} if $$\langle \widehat{a}-\widehat{b}, a-b\rangle\geq 0 \; ~\forall a,b \in \mathcal{H},~\widehat{a}\in Sa,~\widehat{b}\in Sb.$$ The monotone operator $S$ is called {maximal} if the graph $\mathcal{G}(S)$ of $S$, defined by
$$\mathcal{G}(S):=\{(a, \widehat{a})\in \mathcal{H}\times \mathcal{H} : \widehat{a}\in Sa\},$$ is not properly contained in the graph of any other monotone operator.
In other words, $S$ is called a maximal monotone operator if for $(a,\widehat{a})\in \mathcal{H}\times \mathcal{H}$, we have that  $\langle \widehat{a}-\widehat{b}, a-b\rangle \geq 0$ for all $(b,\widehat{b})\in \mathcal{G}(S)$ implies $\widehat{a}\in Sa$.\\
For a set-valued operator $S$, the resolvent associated with it is the mapping $J_{\bar{\delta}}^{S}: \mathcal{H}\rightarrow 2^{\mathcal{H}}$ defined by
\begin{eqnarray*}
J_{\bar{\delta}}^{S}(a) := (I_{\mathcal{H}}+\bar{\delta} S)^{-1}(a), ~~a\in \mathcal{H},~\bar{\delta}>0.
 \end{eqnarray*}
If $S$ is maximal monotone and $T$ is single-valued, then both $J_{\bar{\delta}}^{S}$ and $J_{\bar{\delta}}^{S}(I_{\mathcal{H}}-\bar{\delta} T)$ are single-valued and everywhere defined on $\mathcal{H}$ \cite{IRS1}.

\hfill

\begin{lemma}\label{NL}  The following equalities are true:
\begin{itemize}
 \item[(i)] $2\langle \widehat{a}, \widehat{b}\rangle=\|\widehat{a}\|^{2}+\|\widehat{b}\|^{2}-\|\widehat{a}-\widehat{b}\|^{2}=\|\widehat{a}+\widehat{b}\|^{2}-\|\widehat{a}\|^{2}-\|\widehat{b}\|^{2}~~~\forall~ \widehat{a},\widehat{b}\in \mathcal{H}.$
 \item[(ii)] $\|\bar{r}~\widehat{a}+(1-\bar{r})\widehat{b}\|^2=\bar{r}\|\widehat{a}\|^2+(1-\bar{r})\|\widehat{b}\|^2-\bar{r}(1-\bar{r})\|\widehat{a}-\widehat{b}\|^2~~~\forall~ \widehat{a},\widehat{b}\in \mathcal{H},~\bar{r}\in \mathbb{R}$.
\end{itemize}
\end{lemma}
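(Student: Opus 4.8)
The plan is to prove both identities by direct expansion using the defining relation $\|x\|^2=\langle x,x\rangle$ together with the bilinearity and symmetry of the inner product on $\mathcal{H}$. These are the two standard polarization-type identities in a Hilbert space, so the argument is purely computational; I do not expect any genuine obstacle, and the only thing to watch is the algebraic bookkeeping in part (ii).

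For part (i), I would first expand $\|\widehat{a}-\widehat{b}\|^{2}=\langle \widehat{a}-\widehat{b},\widehat{a}-\widehat{b}\rangle=\|\widehat{a}\|^{2}-2\langle\widehat{a},\widehat{b}\rangle+\|\widehat{b}\|^{2}$ and rearrange to isolate $2\langle\widehat{a},\widehat{b}\rangle$, which yields the first equality $2\langle\widehat{a},\widehat{b}\rangle=\|\widehat{a}\|^{2}+\|\widehat{b}\|^{2}-\|\widehat{a}-\widehat{b}\|^{2}$. Expanding $\|\widehat{a}+\widehat{b}\|^{2}=\|\widehat{a}\|^{2}+2\langle\widehat{a},\widehat{b}\rangle+\|\widehat{b}\|^{2}$ and again isolating $2\langle\widehat{a},\widehat{b}\rangle$ gives the second equality $2\langle\widehat{a},\widehat{b}\rangle=\|\widehat{a}+\widehat{b}\|^{2}-\|\widehat{a}\|^{2}-\|\widehat{b}\|^{2}$, completing (i).

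For part (ii), I would expand the left-hand side by bilinearity as $\|\bar{r}\,\widehat{a}+(1-\bar{r})\widehat{b}\|^{2}=\bar{r}^{2}\|\widehat{a}\|^{2}+2\bar{r}(1-\bar{r})\langle\widehat{a},\widehat{b}\rangle+(1-\bar{r})^{2}\|\widehat{b}\|^{2}$, and then substitute the first identity of part (i) for the cross term $2\langle\widehat{a},\widehat{b}\rangle$. Collecting coefficients via $\bar{r}^{2}+\bar{r}(1-\bar{r})=\bar{r}$ and $(1-\bar{r})^{2}+\bar{r}(1-\bar{r})=1-\bar{r}$ produces exactly $\bar{r}\|\widehat{a}\|^{2}+(1-\bar{r})\|\widehat{b}\|^{2}$, while the remaining term $-\bar{r}(1-\bar{r})\|\widehat{a}-\widehat{b}\|^{2}$ appears directly from the substitution. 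Since every step is an equality valid for arbitrary scalars, no positivity or convexity assumption on $\bar{r}$ is needed, which is consistent with the stated hypothesis $\bar{r}\in\mathbb{R}$.
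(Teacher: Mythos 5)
Your proof is correct, and since the paper states this lemma without proof (it consists of the classical Hilbert space polarization-type identities), your direct expansion via $\|x\|^2=\langle x,x\rangle$ and bilinearity is exactly the standard argument the authors implicitly rely on. The algebra in (ii), including the coefficient collections $\bar{r}^{2}+\bar{r}(1-\bar{r})=\bar{r}$ and $(1-\bar{r})^{2}+\bar{r}(1-\bar{r})=1-\bar{r}$, checks out, and you are right that no restriction on $\bar{r}$ beyond $\bar{r}\in\mathbb{R}$ is needed.
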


\begin{lemma}\cite{P49}\label{lem6}
Suppose that $\{t_{n}\}$ is a sequence of nonnegative real numbers, $\{\sigma_{n}\}$ is a sequence of real numbers in $(0, 1)$ satisfying $\sum_{n=1}^\infty \sigma_n=\infty$, and  $\{h_n\}$ is a sequence of real numbers such that $$ t_{n+1}\leq(1-\sigma_{n})t_{n}+\sigma_{n}h_{n},~n\geq 1.$$ If $\limsup\limits_{i\to\infty} h_{n_i}\leq 0$ for each subsequence $\{t_{n_i}\}$ of $\{t_{n}\}$ satisfying $\liminf\limits_{i\to\infty}\left(t_{n_i+1}-t_{n_i}\right)\geq 0$, then $\lim\limits_{n\to\infty} t_n=0$.
\end{lemma}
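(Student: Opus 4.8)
The plan is to argue by a case distinction according to whether the sequence $\{t_n\}$ is eventually monotone, relying in the harder case on the index-selection device of Maing\'e. First I would dispose of the case in which $\{t_n\}$ is eventually nonincreasing. Being bounded below by $0$, it then converges to some $t^\ast\ge 0$, and taking the whole tail as the required subsequence gives $t_{n+1}-t_n\to 0$, so that $\liminf(t_{n+1}-t_n)=0\ge 0$ and the hypothesis forces $\limsup_n h_n\le 0$. Rewriting the recursion as $\sigma_n(t_n-h_n)\le t_n-t_{n+1}$ and summing, the right-hand side telescopes to a finite quantity, whereas if $t^\ast>0$ then $\liminf(t_n-h_n)\ge t^\ast>0$, so the left-hand side is eventually bounded below by $(t^\ast/2)\sigma_n$, whose partial sums diverge because $\sum_{n}\sigma_n=\infty$. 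This contradiction yields $t^\ast=0$.

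The main case is that $\{t_n\}$ is not eventually monotone, i.e.\ $t_k<t_{k+1}$ for infinitely many $k$. Here I would introduce $\tau(n):=\max\{k\le n:\, t_k<t_{k+1}\}$, which is well defined for all large $n$, is nondecreasing, tends to infinity, and satisfies $t_{\tau(n)}<t_{\tau(n)+1}$ together with $t_n\le t_{\tau(n)+1}$. Along the (infinitely many distinct) indices visited by $\tau$, the increments are strictly positive, so $\liminf\bigl(t_{\tau(n)+1}-t_{\tau(n)}\bigr)\ge 0$ and the hypothesis gives $\limsup_n h_{\tau(n)}\le 0$. Substituting $t_{\tau(n)}\le t_{\tau(n)+1}$ into the recursion cancels the factor $(1-\sigma_{\tau(n)})$ and leaves $t_{\tau(n)}\le h_{\tau(n)}$; hence $\limsup_n t_{\tau(n)}\le 0$, and since $t_n\ge 0$ we get $t_{\tau(n)}\to 0$. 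This sandwiches $h_{\tau(n)}$ between $t_{\tau(n)}$ and its own nonpositive $\limsup$, so $h_{\tau(n)}\to 0$, and then the recursion forces the convex combination $t_{\tau(n)+1}\to 0$. Finally $0\le t_n\le t_{\tau(n)+1}\to 0$ closes the argument.

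The step I expect to be the main obstacle is the non-monotone case, specifically the need to produce a subsequence that both reflects the asymptotic size of $\{t_n\}$ and has nonnegative increments, so that the somewhat unusual hypothesis (phrased over subsequences with $\liminf(t_{n_i+1}-t_{n_i})\ge 0$) can actually be invoked. The device $\tau(n)$ is precisely what resolves this tension: its indices always sit at a genuine increase of the sequence, supplying the nonnegative increments, while the transfer inequality $t_n\le t_{\tau(n)+1}$ propagates the convergence obtained at those indices back to the entire sequence. The one technical point to verify carefully is that $\tau(n)$ indeed sweeps through infinitely many distinct indices, so that it defines a legitimate subsequence; this is immediate from $\tau(n)\to\infty$.
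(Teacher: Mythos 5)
Your proof is correct and is essentially the proof given in the source the paper cites for this lemma, \cite{P49} (the paper itself states the result without proof): the same dichotomy between an eventually nonincreasing tail, disposed of by summing $\sigma_n(t_n-h_n)\le t_n-t_{n+1}$ against $\sum_{n}\sigma_n=\infty$, and the non-monotone case handled by Maing\'e's index map $\tau(n)=\max\{k\le n:\,t_k<t_{k+1}\}$ with the transfer inequality $t_n\le t_{\tau(n)+1}$. All steps check out, including the one genuine technical point you flag, namely that $\tau$ visits infinitely many distinct indices so the subsequence hypothesis can legitimately be invoked.
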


\begin{lemma}\label{le5}\cite{SY15}
Suppose that $T:\mathcal{H}\rightarrow \mathcal{H}$ is monotone and Lipschitz continuous, and $S:\mathcal{H}\rightarrow2^{\mathcal{H}}$ is maximal monotone, then  $(S+T):\mathcal{H}\rightarrow 2^{\mathcal{H}}$ is maximal monotone.
\end{lemma}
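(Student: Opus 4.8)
The plan is to invoke the classical surjectivity characterization of maximal monotonicity due to Minty, which states that a monotone operator $A:\mathcal{H}\to 2^{\mathcal{H}}$ is maximal monotone if and only if $\operatorname{range}(I_{\mathcal{H}}+A)=\mathcal{H}$. First I would record that $S+T$ is monotone: for $a,b$ in the domain of $S$ and $\widehat{a}\in Sa$, $\widehat{b}\in Sb$, we have
\[
\langle (\widehat{a}+Ta)-(\widehat{b}+Tb),\,a-b\rangle=\langle \widehat{a}-\widehat{b},a-b\rangle+\langle Ta-Tb,a-b\rangle\geq 0,
\]
since both summands are nonnegative by monotonicity of $S$ and of $T$. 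Because $T$ is single-valued and everywhere defined, $\operatorname{dom}(S+T)=\operatorname{dom}(S)$, so $S+T$ is a genuine monotone operator and it makes sense to ask whether it is maximal.

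Next, the key observation is that maximality is invariant under positive scaling: $A$ is maximal monotone if and only if $\bar{\delta}A$ is maximal monotone for any fixed $\bar{\delta}>0$, since the graph transformation $(x,u)\mapsto(x,\bar{\delta}u)$ preserves both the monotonicity inequality and the defining maximality inequality. Consequently, by Minty's theorem it suffices to exhibit a single $\bar{\delta}>0$ for which $\operatorname{range}\bigl(I_{\mathcal{H}}+\bar{\delta}(S+T)\bigr)=\mathcal{H}$; this will force $\bar{\delta}(S+T)$, and hence $S+T$, to be maximal monotone.

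To verify this surjectivity, I would fix $y\in\mathcal{H}$ and seek $x\in\mathcal{H}$ solving $y\in x+\bar{\delta}Sx+\bar{\delta}Tx$. Rearranging, this is equivalent to $y-\bar{\delta}Tx\in(I_{\mathcal{H}}+\bar{\delta}S)x$, i.e. to the fixed point equation
\[
x=J_{\bar{\delta}}^{S}\bigl(y-\bar{\delta}Tx\bigr)=:\Phi(x).
\]
Because $S$ is maximal monotone, $J_{\bar{\delta}}^{S}$ is single-valued, everywhere defined, and nonexpansive (indeed firmly nonexpansive), as already noted in the preliminaries, while the map $x\mapsto y-\bar{\delta}Tx$ is $\bar{\delta}\mathsf{L}$-Lipschitz by the Lipschitz continuity of $T$. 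Hence $\Phi$ is $\bar{\delta}\mathsf{L}$-Lipschitz, and choosing $\bar{\delta}\in(0,\mathsf{L}^{-1})$ makes $\Phi$ a strict contraction on the complete space $\mathcal{H}$. By the Banach fixed point theorem, $\Phi$ has a (unique) fixed point $x$, which solves $y\in\bigl(I_{\mathcal{H}}+\bar{\delta}(S+T)\bigr)x$. As $y$ was arbitrary, $\operatorname{range}\bigl(I_{\mathcal{H}}+\bar{\delta}(S+T)\bigr)=\mathcal{H}$, which completes the proof.

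The main obstacle is that the naive fixed point map obtained by taking $\bar{\delta}=1$ need not be a contraction, since the Lipschitz constant $\mathsf{L}$ may exceed $1$; the essential device is to exploit the scale-invariance of maximality so that surjectivity only needs to be checked for a conveniently small $\bar{\delta}$, where the composition of the nonexpansive resolvent with the contractive perturbation $I_{\mathcal{H}}-\bar{\delta}T$ becomes a contraction. The only other point requiring care is the justification that $J_{\bar{\delta}}^{S}$ is globally defined and nonexpansive, which is precisely where maximality of $S$ enters, again through the Minty characterization.
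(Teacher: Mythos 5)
Your proof is correct, and there is an important contextual point: the paper itself gives no proof of this lemma --- it is quoted with a citation to the literature (Lemaire), so there is no in-paper argument to diverge from. Your route, Minty's surjectivity criterion plus the Banach fixed point theorem, is the classical self-contained proof of exactly this statement, and every step checks out. Monotonicity of the sum is immediate; maximality is indeed invariant under multiplication by $\bar{\delta}>0$, since $(a,\widehat{a})$ is monotonically related to $\mathcal{G}(S+T)$ if and only if $(a,\bar{\delta}\widehat{a})$ is monotonically related to $\mathcal{G}\bigl(\bar{\delta}(S+T)\bigr)$; the equivalence $y\in\bigl(I_{\mathcal{H}}+\bar{\delta}(S+T)\bigr)x \Leftrightarrow x=J_{\bar{\delta}}^{S}(y-\bar{\delta}Tx)$ uses only the single-valuedness and full domain of the resolvent, both of which follow from the maximal monotonicity of $S$ and are recorded in the paper's preliminaries; and since $J_{\bar{\delta}}^{S}$ is nonexpansive while $x\mapsto y-\bar{\delta}Tx$ is $\bar{\delta}\mathsf{L}$-Lipschitz, any $\bar{\delta}\in(0,\mathsf{L}^{-1})$ makes $\Phi$ a strict contraction on the complete space $\mathcal{H}$, so the needed surjectivity follows. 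One cosmetic slip: in your closing paragraph you describe the inner map as the perturbation $I_{\mathcal{H}}-\bar{\delta}T$, whereas the map actually composed with the resolvent is the affine map $x\mapsto y-\bar{\delta}Tx$; it has the same Lipschitz constant $\bar{\delta}\mathsf{L}$, so nothing breaks, but the phrasing should be fixed. It is also worth noting what your argument buys: monotonicity of $T$ enters only to make $S+T$ monotone, and Lipschitz continuity only to make $\Phi$ contractive, so you obtain the result by entirely elementary means, avoiding the heavier Rockafellar sum theorem with its domain qualification hypotheses --- which is unnecessary here precisely because $T$ is single-valued, everywhere defined and continuous.
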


\begin{lemma}\label{l5} \cite[Lem.\ 3.1]{pem}
Suppose that $\{t_{n}\}$ and $\{r_{n}\}$ are sequences of nonnegative real numbers such that
\begin{equation*}
t_{n+1}\leq(1-\sigma_{n})t_{n}+s_{n}+r_{n},\ \ n\geq1,
\end{equation*}
where $\{\sigma_{n}\}$ is a sequence in $(0,1)$ and $\{s_{n}\}$ is a
real sequence. Let $\sum_{n=1}^{\infty}r_{n}<\infty$ and $s_{n}\leq\sigma_{n}M$ for some $M\geq0$. Then $\{t_{n}\}$ is bounded.
 \end{lemma}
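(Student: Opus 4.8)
The plan is to prove boundedness directly by induction, after first absorbing the term $s_n$ into the recursion. Since $s_n \leq \sigma_n M$ and $\sigma_n \in (0,1)$, the hypothesis immediately gives
\begin{equation*}
t_{n+1} \leq (1-\sigma_n) t_n + \sigma_n M + r_n, \quad n \geq 1.
\end{equation*}
The right-hand side is now a convex-combination-type update of $t_n$ toward the target value $M$, perturbed by the summable nonnegative term $r_n$. The naive guess that $\{t_n\}$ is bounded by a single fixed constant fails because the perturbations $r_n$ accumulate; the correct device is to let the bound grow by the partial sums of $\{r_n\}$, which remain controlled precisely because $\sum_{n=1}^\infty r_n < \infty$.

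Concretely, I would set $C := \max\{t_1, M\}$ and prove by induction on $n$ that
\begin{equation*}
t_n \leq C + \sum_{k=1}^{n-1} r_k \quad \text{for all } n \geq 1,
\end{equation*}
with the convention that the empty sum is zero. The base case $n=1$ is immediate from $t_1 \leq C$. For the inductive step, I would substitute the induction hypothesis into the reduced recursion and use $M \leq C$ together with $1 - \sigma_n \in (0,1)$ to estimate
\begin{equation*}
t_{n+1} \leq (1-\sigma_n)\Big(C + \sum_{k=1}^{n-1} r_k\Big) + \sigma_n C + r_n = C + (1-\sigma_n)\sum_{k=1}^{n-1} r_k + r_n.
\end{equation*}
Since $1 - \sigma_n \leq 1$ and each $r_k \geq 0$, the middle term is bounded above by $\sum_{k=1}^{n-1} r_k$, which yields $t_{n+1} \leq C + \sum_{k=1}^{n} r_k$ and closes the induction.

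The conclusion then follows at once: writing $R := \sum_{k=1}^\infty r_k < \infty$, the established bound gives $t_n \leq C + R = \max\{t_1, M\} + R$ for every $n$, so $\{t_n\}$ is bounded. I do not expect any genuine obstacle here; the only point requiring care is the formulation of the induction hypothesis, since one must carry the accumulated perturbation $\sum_{k=1}^{n-1} r_k$ rather than attempt a fixed bound, and one must exploit the estimate $(1-\sigma_n)C + \sigma_n M \leq (1-\sigma_n)C + \sigma_n C = C$, which is exactly what forces the choice $C = \max\{t_1, M\}$.
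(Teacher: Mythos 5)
Your proof is correct. Note that the paper itself offers no proof of this lemma---it is quoted verbatim from Maing\'{e} \cite[Lem.\ 3.1]{pem}---and your induction, carrying the accumulated perturbation $\sum_{k=1}^{n-1} r_k$ and using $(1-\sigma_n)C + \sigma_n M \leq C$ with $C = \max\{t_1, M\}$, is precisely the standard argument behind that cited result, so there is nothing to repair or compare further.
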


\hfill

\section{Modified Forward-Reflected-Backward Splitting Method}\label{Se3}
\hrule\hrule	
	\begin{ass}\label{AS1} ${}$
\hrule\hrule
\begin{itemize}
\item[(a)] $S$ is maximal monotone,
\item[(b)]  $T$ is monotone and Lipschitz continuous with constant $\mathsf{L}>0$,
\item[(c)] $(S+T)^{-1}(0)$ is nonempty.
		\end{itemize}
\end{ass}		

\hfill

\hrule \hrule
\begin{alg}\label{AL1} Let $\delta_0,\delta_1>0$, $\bar{r}\in \Big(\bar{\beta},~\frac{1-2\bar{\beta}}{2}\Big)$ with $\bar{\beta}\in (0, \frac{1}{4})$, and choose the sequences $\{\sigma_n\}$ in $(0, 1)$ and $\{c_n\}$ in $[0, \infty)$ such that $\sum_{n=1}^{\infty} c_n< \infty$. For arbitrary $\hat{v}, w_0,w_1 \in \mathcal{H}$, let the sequence $\{w_n\}$ be generated by
	\hrule \hrule
	 \begin{eqnarray}\label{a1}
w_{n+1}=J^{S}_{\delta_n}\big(\sigma_n \hat{v}+(1-\sigma_n) w_n-\delta_nTw_n-\delta_{n-1}(1-\sigma_n)(Tw_{n}-Tw_{n-1})\big),~ n\geq 1,
\end{eqnarray}
where
\begin{eqnarray}\label{a11}
	\delta_{n+1}=\begin{cases}
	\min \left\{\frac{\bar{r}\|w_n-w_{n+1}\|}{\|Tw_n-Tw_{n+1}\|},~\delta_{n}+c_{n}\right\}, & \mbox{if}~  Tw_n\neq Tw_{n+1},\\
	\delta_{n}+c_n,& \mbox{otherwise}.
	\end{cases}
	\end{eqnarray}
\hrule\hrule
\end{alg}

\noindent We call Algorithm \ref{AL1} a {\it forward-reflected-anchored-backward splitting method} with a self-adaptive step size $\delta_n$, an anchor $\hat{v}$ and an anchoring coefficient $\sigma_n$. Since this algorithm is based on the Halpern  iteration, it can also be viewed as a Halpern-type forward-reflected-backward method. For more information on the convergence of Halpern-type methods for solving optimization problems, see, for example, \cite{24, XU, QY, 70}.

\hfill

\begin{remark} \label{VSS} ${}$
By \eqref{a11}, $\lim\limits_{n\to\infty}\delta_n=\bar{\delta}$, where $\bar{\delta}\in [\min\{\bar{r}\mathsf{L}^{-1}, \delta_1\}, ~\delta_1+\bar{c}]$ with $\bar{c}=\sum_{n=1}^\infty c_n$  (see \cite{HY}).
If $c_n=0$, then the step size $\delta_n$ in \eqref{a11} is similar to the one in \cite{MT1}.
\end{remark}

\hfill

\noindent We now establish the strong convergence of Algorithm \ref{AL1}. We begin with the following lemma.
\begin{lemma}\label{lem1} Let $\{w_n\}$ be generated by Algorithm \ref{AL1} and assume that Assumption \ref{AS1} holds. If $\lim\limits_{n\to\infty}\sigma_n=0$, then $\{w_n\}$ is bounded.
\end{lemma}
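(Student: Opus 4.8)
The plan is to fix a solution $\widehat{w}\in (S+T)^{-1}(0)$, so that $-T\widehat{w}\in S\widehat{w}$, and to steer everything toward the form required by Lemma \ref{l5}. Writing $y_n$ for the bracketed argument of the resolvent in \eqref{a1}, the identity $w_{n+1}=J^S_{\delta_n}(y_n)$ gives $\delta_n^{-1}(y_n-w_{n+1})\in Sw_{n+1}$. Pairing this with $-T\widehat{w}\in S\widehat{w}$ through the monotonicity of $S$, adding the monotonicity inequality $\langle Tw_{n+1}-T\widehat{w},w_{n+1}-\widehat{w}\rangle\ge0$ of $T$, and multiplying by $\delta_n$, I would obtain one inequality whose operator dependence is only through the differences $Tw_{n+1}-Tw_n$ and $Tw_n-Tw_{n-1}$. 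Splitting $\sigma_n\hat v+(1-\sigma_n)w_n-w_{n+1}=\sigma_n(\hat v-\widehat{w})+(1-\sigma_n)(w_n-\widehat{w})-(w_{n+1}-\widehat{w})$ and applying Lemma \ref{NL}(i) to the averaged inner product yields, after moving the resulting $\tfrac{1-\sigma_n}{2}\|w_{n+1}-\widehat{w}\|^2$ to the left,
\[ \|w_{n+1}-\widehat{w}\|^2\le(1-\widetilde\sigma_n)\|w_n-\widehat{w}\|^2+\widetilde\sigma_n\langle\hat v-\widehat{w},w_{n+1}-\widehat{w}\rangle-(1-\widetilde\sigma_n)\|w_n-w_{n+1}\|^2+\tfrac{2}{1+\sigma_n}\Xi_n, \]
where $\widetilde\sigma_n=2\sigma_n/(1+\sigma_n)$ and $\Xi_n:=\langle\delta_n(Tw_{n+1}-Tw_n)-(1-\sigma_n)\delta_{n-1}(Tw_n-Tw_{n-1}),w_{n+1}-\widehat{w}\rangle$ is the reflected forward part.

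Next I would absorb $\Xi_n$ into a Lyapunov functional. Setting $\Gamma_n:=\delta_{n-1}\langle Tw_n-Tw_{n-1},w_n-\widehat{w}\rangle$, the leading piece of $\Xi_n$ is exactly $\Gamma_{n+1}$; writing $w_{n+1}-\widehat{w}=(w_n-\widehat{w})+(w_{n+1}-w_n)$ in the remaining piece produces $\Gamma_{n+1}-(1-\sigma_n)\Gamma_n$ together with a cross term $-(1-\sigma_n)\delta_{n-1}\langle Tw_n-Tw_{n-1},w_{n+1}-w_n\rangle$. Moving $\Gamma_{n+1}$ to the left then suggests the functional $\Phi_n:=\|w_n-\widehat{w}\|^2-2\Gamma_n+\kappa\|w_n-w_{n-1}\|^2$ for a suitable $\kappa>0$ (the coefficient $2$ coming from $\tfrac{2}{1+\sigma_n}\to2$).

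The quantitative control comes from the step-size rule \eqref{a11}, which guarantees $\delta_n\|Tw_{n-1}-Tw_n\|\le\bar r\|w_{n-1}-w_n\|$ for every $n$ (trivially in the second branch, where $Tw_{n-1}=Tw_n$), whence $\delta_{n-1}\|Tw_n-Tw_{n-1}\|\le(\delta_{n-1}/\delta_n)\,\bar r\,\|w_{n-1}-w_n\|$. By Remark \ref{VSS}, $\delta_{n-1}/\delta_n\to1$, so this factor is bounded by an $\bar r'$ as close to $\bar r$ as desired for large $n$. Cauchy--Schwarz and Young's inequality then bound both the cross term and $2\Gamma_n$ by expressions in $\|w_{n-1}-w_n\|^2$, $\|w_n-w_{n+1}\|^2$ and $\|w_n-\widehat{w}\|^2$; the admissible window $\bar r\in(\bar\beta,\tfrac{1-2\bar\beta}{2})$ with $\bar\beta\in(0,\tfrac14)$ is precisely what forces the resulting coefficients of the two squared increments to keep the correct sign and renders $\Phi_n$ coercive, i.e.\ $\Phi_n\ge\gamma\|w_n-\widehat{w}\|^2$ up to a summable remainder for some $\gamma>0$.

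Finally I would assemble the recursion. The apparent circularity of the anchor term is removed by $\langle\hat v-\widehat{w},w_{n+1}-\widehat{w}\rangle\le\tfrac12\|\hat v-\widehat{w}\|^2+\tfrac12\|w_{n+1}-\widehat{w}\|^2$ and absorbing the $\|w_{n+1}-\widehat{w}\|^2$-part to the left, turning the anchoring contribution into an $s_n\le\sigma_n M$ term with $M=\|\hat v-\widehat{w}\|^2$. The residual $\sigma_n$-terms, chiefly $2\sigma_n\Gamma_n\le\sigma_n\bar r'(\|w_{n-1}-w_n\|^2+\|w_n-\widehat{w}\|^2)$, are harmless: since $\bar r'<\tfrac12$ while $\widetilde\sigma_n\approx2\sigma_n$, the effective coefficient of $\|w_n-\widehat{w}\|^2$ stays strictly below $1$ and the contraction factor survives, while the increments governed by $c_n$ (with $\sum c_n<\infty$) furnish a summable error $r_n$. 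This produces $\Phi_{n+1}\le(1-\hat\sigma_n)\Phi_n+s_n+r_n$ with $s_n\le\hat\sigma_n M$ and $\sum r_n<\infty$, so Lemma \ref{l5} gives boundedness of $\{\Phi_n\}$ and hence of $\{w_n\}$ by coercivity. The main obstacle is the bookkeeping in the middle two steps: simultaneously telescoping the reflected terms through $\Phi_n$, tracking the $(1-\sigma_n)$ and $\delta_{n-1}/\delta_n$ factors, and choosing the Young weights so that the $\bar r,\bar\beta$ window delivers nonnegative increment coefficients without spoiling the $s_n\le\sigma_n M$ structure of the anchor.
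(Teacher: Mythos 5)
Your proposal is correct and follows essentially the same route as the paper: your functional $\Phi_n$ (with $\kappa=\tfrac12$) is exactly the paper's $t_n=\|w_n-\widehat{w}\|^2+2\delta_{n-1}\langle Tw_n-Tw_{n-1},\widehat{w}-w_n\rangle+\tfrac12\|w_n-w_{n-1}\|^2$, and you use the same ingredients in the same order --- the resolvent inclusion with monotonicity of $S$ and $T$, the step-size bound $\delta_{n-1}\|Tw_n-Tw_{n-1}\|\le\frac{\delta_{n-1}}{\delta_n}\bar r\,\|w_n-w_{n-1}\|$ with $\frac{\delta_{n-1}}{\delta_n}\bar r<\tfrac12-\bar\beta$ eventually, the coercivity of the functional from the $(\bar\beta,\frac{1-2\bar\beta}{2})$ window, and the final appeal to Lemma \ref{l5}. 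The only deviation is cosmetic: you normalize by $(1+\sigma_n)$ and treat the anchor via Young's inequality with absorption (generating small $O(\sigma_n)$ and summable $r_n$ corrections), whereas the paper uses the exact three-point identity \eqref{H**} to land directly on $t_{n+1}\le(1-\sigma_n)t_n+\sigma_n\|\hat v-\widehat{w}\|^2$ with $r_n=0$.
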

	\begin{proof}
		Let $\widehat{w}\in (S+T)^{-1}(0).$  Then $-\delta_n T\widehat{w}\in \delta_nS\widehat{w}.$ Set $a_n:=\sigma_n\hat{v}+(1-\sigma_n) w_n$. Then by \eqref{a1}, we have
		\begin{eqnarray}\label{**}
		a_n-\delta_nTw_n-\delta_{n-1}(1-\sigma_n)(Tw_{n}-Tw_{n-1})-w_{n+1}\in \delta_n  S w_{n+1}.
		\end{eqnarray}
		Thus, by the monotonicity of $S$, we see that
		$$\left\langle a_n-\delta_nTw_n-\delta_{n-1}(1-\sigma_n)(Tw_{n}-Tw_{n-1})-w_{n+1}+\delta_n T\widehat{w},~ w_{n+1}-\widehat{w}\right\rangle \geq 0,$$
	which  implies that
	\begin{eqnarray}\label{M1}
	0&\leq & 2\left\langle w_{n+1}- a_n+\delta_nTw_n+\delta_{n-1}(1-\sigma_n)(Tw_{n}-Tw_{n-1})-\delta_n T\widehat{w},~ \widehat{w}-w_{n+1}\right\rangle\nonumber\\
	&=&2\langle w_{n+1}-a_n,\widehat{w}-w_{n+1}\rangle +2\delta_{n}\langle Tw_n-T\widehat{w},\widehat{w}-w_{n+1}\rangle +2\delta_{n-1}(1-\sigma_n)\langle Tw_n-Tw_{n-1},\widehat{w}-w_{n}\rangle \nonumber\\
&&+2\delta_{n-1}(1-\sigma_n)\langle Tw_n-Tw_{n-1},w_{n}-w_{n+1}\rangle\nonumber\\
&=&\|a_n-\widehat{w}\|^2-\|w_{n+1}-\widehat{w}\|^2-\|w_{n+1}-a_n\|^2 +2\delta_{n}\langle Tw_n-T\widehat{w},\widehat{w}-w_{n+1}\rangle  \nonumber\\
&&+2\delta_{n-1}(1-\sigma_n)\langle Tw_n-Tw_{n-1},\widehat{w}-w_{n}\rangle+2\delta_{n-1}(1-\sigma_n)\langle Tw_n-Tw_{n-1},w_{n}-w_{n+1}\rangle,	
\end{eqnarray}		
where the last equation follows from Lemma \ref{NL}(i). Next, since $T$ is monotone, we have that
\begin{eqnarray}\label{M2}
\langle Tw_n-T\widehat{w}, \widehat{w}-w_{n+1}\rangle\leq \langle Tw_n-Tw_{n+1}, \widehat{w}-w_{n+1}\rangle.
\end{eqnarray}
Also, from \eqref{a11}, we get
	\begin{align}\label{a3A}\nonumber
	2\delta_{n-1}\langle Tw_n-Tw_{n-1},w_n-w_{n+1}\rangle&\leq 2\delta_{n-1} \|Tw_n-Tw_{n-1}\|\|w_n-w_{n+1}\|\\
     &\leq \frac{2\delta_{n-1}}{\delta_n}{\bar{r}}\|w_n-w_{n-1}\|\|w_n-w_{n+1}\|\nonumber\\
	&\leq \frac{\delta_{n-1}}{\delta_n}{\bar{r}}\Big(\|w_n-w_{n-1}\|^2+\|w_{n+1}-w_n\|^2\Big).
	\end{align}
By Remark \ref{VSS} and the condition $\bar{r}\in \Big(\bar{\beta},~\frac{1-2\bar{\beta}}{2}\Big),$ we get $\lim\limits_{n\to\infty}\frac{\delta_{n-1}}{\delta_{n}}\bar{r} =\bar{r}<\frac{1}{2}-\bar{\beta}$. Thus, there exists $n_0\geq 1$ such that $\frac{\delta_{n-1}}{\delta_{n}}\bar{r} <\frac{1}{2}-\bar{\beta}~\forall n\geq n_0$. Hence, using \eqref{M2} and \eqref{a3A} in \eqref{M1},  we get
\begin{eqnarray}\label{LL*}
	&&\|w_{n+1}-\widehat{w}\|^2 +2\delta_{n}\langle Tw_{n+1}-Tw_n,\widehat{w}-w_{n+1}\rangle +\frac{1}{2}\|w_{n+1}-w_n\|^2 \nonumber\\
	&&\leq  \|a_n-\widehat{w}\|^2-\|w_{n+1}-a_n\|^2 +2\delta_{n-1}(1-\sigma_n)\langle Tw_n-Tw_{n-1},\widehat{w}-w_{n}\rangle  \nonumber\\
&&+(1-\sigma_n)\big(\frac{1}{2}-\bar{\beta}\big)\|w_n-w_{n-1}\|^2+\left[\frac{\delta_{n-1}}{\delta_n}{\bar{r}}(1-\sigma_n)+\frac{1}{2}\right]\|w_{n+1}-w_n\|^2~\forall n\geq n_0.	
\end{eqnarray}	
Using Lemma \ref{NL}(i), we see that
\begin{eqnarray}\label{H1}
\|a_n-\widehat{w}\|^2&=&\|(w_n-\widehat{w})-\sigma_n(w_n-\hat{v})\|^2\nonumber\\
&=&\|w_n-\widehat{w}\|^2+\sigma_n^2\|w_n-\hat{v}\|^2-2\sigma_n\langle w_n-\widehat{w}, w_n-\hat{v}\rangle\nonumber\\
&=&\|w_n-\widehat{w}\|^2+\sigma_n^2\|w_n-\hat{v}\|^2-\sigma_n\|w_n-\hat{v}\|^2-\sigma_n\|w_n-\widehat{w}\|^2+\sigma_n\|\hat{v}-\widehat{w}\|^2.
\end{eqnarray}
Replacing $\widehat{w}$ by $w_{n+1}$ in \eqref{H1}, we get
\begin{eqnarray}\label{H2}
\|a_n-w_{n+1}\|^2&=&\|w_n-w_{n+1}\|^2+\sigma_n^2\|w_n-\hat{v}\|^2-\sigma_n\|w_n-\hat{v}\|^2-\sigma_n\|w_n-w_{n+1}\|^2\nonumber \\
&&+\sigma_n\|\hat{v}-w_{n+1}\|^2.
\end{eqnarray}
Now, subtracting \eqref{H2} from \eqref{H1}, we obtain
\begin{eqnarray}\label{H**}
&&\|a_n-\widehat{w}\|^2-\|a_n-w_{n+1}\|^2\nonumber\\
&=&(1-\sigma_n)\|w_n-\widehat{w}\|^2+\sigma_n\|\hat{v}-\widehat{w}\|^2-(1-\sigma_n)\|w_{n+1}-w_n\|^2-\sigma_n\|w_{n+1}-\hat{v}\|^2.
\end{eqnarray}

\noindent Using \eqref{H**} in \eqref{LL*}, we get
\begin{eqnarray}\label{LL**}
	&&\|w_{n+1}-\widehat{w}\|^2 +2\delta_{n}\langle Tw_{n+1}-Tw_n,\widehat{w}-w_{n+1}\rangle +\frac{1}{2}\|w_{n+1}-w_n\|^2 \nonumber\\
	&&\leq  (1-\sigma_n)\|w_n-\widehat{w}\|^2+\sigma_n\|\hat{v}-\widehat{w}\|^2-(1-\sigma_n)\|w_{n+1}-w_n\|^2-\sigma_n\|w_{n+1}-\hat{v}\|^2 \nonumber\\
	&& +2\delta_{n-1}(1-\sigma_n)\langle Tw_n-Tw_{n-1},\widehat{w}-w_{n}\rangle +(1-\sigma_n)\big(\frac{1}{2}-\bar{\beta}\big)\|w_n-w_{n-1}\|^2 \nonumber\\
&&+\left[\frac{\delta_{n-1}}{\delta_n}{\bar{r}}(1-\sigma_n)+\frac{1}{2}\right]\|w_{n+1}-w_n\|^2\nonumber\\
&&\leq  (1-\sigma_n)\left[\|w_n-\widehat{w}\|^2 +2\delta_{n-1}\langle Tw_n-Tw_{n-1},\widehat{w}-w_{n}\rangle +\frac{1}{2}\|w_n-w_{n-1}\|^2\right] \nonumber\\
&&+\sigma_n\|\hat{v}-\widehat{w}\|^2 -\left[\frac{1}{2}-\sigma_n-\frac{\delta_{n-1}}{\delta_n}{\bar{r}}(1-\sigma_n)\right]\|w_{n+1}-w_n\|^2-\bar{\beta}(1-\sigma_n)\|w_n-w_{n-1}\|^2~\forall n\geq n_0.	
\end{eqnarray}	

\noindent Set $t_n:=\|w_n-\widehat{w}\|^2 +2\delta_{n-1}\langle Tw_n-Tw_{n-1},\widehat{w}-w_{n}\rangle +\frac{1}{2}\|w_n-w_{n-1}\|^2$. Then for all $n\geq n_0$,
\begin{eqnarray}\label{Pos}
t_n&\geq & \|w_n-\widehat{w}\|^2 -2\delta_{n-1}\| Tw_n-Tw_{n-1}\| \|w_{n}-\widehat{w}\| +\frac{1}{2}\|w_n-w_{n-1}\|^2\nonumber\\
&\geq & \|w_n-\widehat{w}\|^2 -\frac{\delta_{n-1}}{\delta_{n}} \bar{r}\left(\| w_n-w_{n-1}\|^2+ \|w_{n}-\widehat{w}\|^2\right) +\frac{1}{2}\|w_n-w_{n-1}\|^2\nonumber\\
&\geq & \|w_n-\widehat{w}\|^2 +(\bar{\beta}-\frac{1}{2}) \left(\| w_n-w_{n-1}\|^2+ \|w_{n}-\widehat{w}\|^2\right) +\frac{1}{2}\|w_n-w_{n-1}\|^2\nonumber\\
&=& (\frac{1}{2}+\bar{\beta})\|w_{n}-\widehat{w}\|^2+\bar{\beta}\|w_n-w_{n-1}\|^2.
\end{eqnarray}
Hence, $t_n\geq 0$ for all $n\geq n_0$. On the other hand, since $\lim\limits_{n\to\infty}\sigma_n=0$ and $\bar{r}<\frac{1-2\bar{\beta}}{2}$, we have
$$\lim\limits_{n\to\infty}\left[\frac{1}{2}-\sigma_n-\frac{\delta_{n-1}}{\delta_n}{\bar{r}}(1-\sigma_n)\right]=\frac{1}{2}-\bar{r}>\bar{\beta}.$$
Thus, there exists $n_1\in \mathbb{N}$, $n_1\geq n_0$, such that $\frac{1}{2}-\sigma_n-\frac{\delta_{n-1}}{\delta_n}{\bar{r}}(1-\sigma_n)>\bar{\beta}~\forall n\geq n_1.$ Therefore, it follows from \eqref{LL**} that
\begin{eqnarray}
t_{n+1}&\leq & (1-\sigma_n)t_n +\sigma_n\|\hat{v}-\widehat{w}\|^2-\bar{\beta}\|w_{n+1}-w_n\|^2-\bar{\beta}(1-\sigma_n)\|w_n-w_{n-1}\|^2\nonumber\\
&\leq & (1-\sigma_n)t_n +\sigma_n\|\hat{v}-\widehat{w}\|^2 ~\forall n\geq n_1\geq n_0,
\end{eqnarray}
which by Lemma \ref{l5} implies that $\{t_n\}$ is bounded. It follows from \eqref{Pos} that $\{w_n\}$ is bounded too.
\end{proof}

\hfill

\begin{theorem}\label{THM1} Let $\{w_n\}$ be generated by Algorithm \ref{AL1} when Assumption \ref{AS1} holds. If $\lim\limits_{n\to\infty}\sigma_n=0$ and $\sum\limits_{n=1}^\infty\sigma_n=\infty$, then $\{w_n\}$ converges strongly to $P_{(S+T)^{-1}(0)}\hat{v}$.
\end{theorem}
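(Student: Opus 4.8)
The plan is to apply the key analytic tool already prepared in Lemma \ref{lem6} to the nonnegative sequence $t_n=\|w_n-\widehat{w}\|^2 +2\delta_{n-1}\langle Tw_n-Tw_{n-1},\widehat{w}-w_{n}\rangle +\frac{1}{2}\|w_n-w_{n-1}\|^2$ with $\widehat{w}=P_{(S+T)^{-1}(0)}\hat{v}$. From the recursion established at the end of the proof of Lemma \ref{lem1}, namely
\begin{equation*}
t_{n+1}\leq (1-\sigma_n)t_n +\sigma_n\|\hat{v}-\widehat{w}\|^2-\bar{\beta}\|w_{n+1}-w_n\|^2-\bar{\beta}(1-\sigma_n)\|w_n-w_{n-1}\|^2,
\end{equation*}
I would first sharpen it into the form $t_{n+1}\leq(1-\sigma_n)t_n+\sigma_n h_n$ required by Lemma \ref{lem6}. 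The natural choice is $h_n=2\langle \hat{v}-\widehat{w},\, w_{n+1}-\widehat{w}\rangle$ (or a comparable inner-product term), obtained by revisiting the estimate and keeping the anchor term $\sigma_n\|\hat{v}-\widehat{w}\|^2$ together with the telescoping pieces from $\|a_n-\widehat{w}\|^2$; the defining property $\langle \hat{v}-\widehat{w}, z-\widehat{w}\rangle\le 0$ for $z$ in the solution set, which characterizes the metric projection $P_{(S+T)^{-1}(0)}\hat{v}$, is what will eventually force $\limsup h_{n_i}\le 0$.

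Next I would verify the hypothesis of Lemma \ref{lem6} by a subsequence argument. Take any subsequence $\{t_{n_i}\}$ with $\liminf_{i\to\infty}(t_{n_i+1}-t_{n_i})\geq 0$. Feeding this back into the discarded nonnegative terms of the recursion, the inequality forces $\bar{\beta}\|w_{n_i+1}-w_{n_i}\|^2+\bar{\beta}(1-\sigma_{n_i})\|w_{n_i}-w_{n_i-1}\|^2\to 0$ (using $\sigma_n\to 0$ and boundedness of $t_n$, hence of $\|\hat{v}-\widehat{w}\|^2$), so $\|w_{n_i+1}-w_{n_i}\|\to 0$ and $\|w_{n_i}-w_{n_i-1}\|\to 0$. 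Combined with $\sigma_n\to0$, this gives $\|a_{n_i}-w_{n_i+1}\|\to0$, so the defining inclusion \eqref{**} shows that the residual $\frac{1}{\delta_{n_i}}(a_{n_i}-\delta_{n_i}Tw_{n_i}-\cdots-w_{n_i+1})+Tw_{n_i+1}\in (S+T)w_{n_i+1}$ tends to $0$.

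Then I would extract a weakly convergent sub-subsequence $w_{n_{i_k}+1}\rightharpoonup w^\ast$ (possible since $\{w_n\}$ is bounded by Lemma \ref{lem1}) and use the maximal monotonicity of $S+T$, guaranteed by Lemma \ref{le5}, together with the standard weak-strong closedness of the graph of a maximal monotone operator, to conclude $0\in(S+T)w^\ast$, i.e. $w^\ast\in(S+T)^{-1}(0)$. The projection inequality then yields $\langle \hat{v}-\widehat{w}, w^\ast-\widehat{w}\rangle\le 0$, and since $\|w_{n_i+1}-w_{n_i}\|\to0$ implies the weak limit of $w_{n_i}$ agrees with $w^\ast$, passing to the limit along this sub-subsequence gives $\limsup_i h_{n_i}=2\langle\hat{v}-\widehat{w}, w^\ast-\widehat{w}\rangle\le 0$; because this holds for every such sub-subsequence, $\limsup_i h_{n_i}\le0$. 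Finally, Lemma \ref{lem6} delivers $t_n\to0$, and the coercive lower bound $t_n\ge(\tfrac12+\bar{\beta})\|w_n-\widehat{w}\|^2$ from \eqref{Pos} forces $\|w_n-\widehat{w}\|\to0$, i.e. strong convergence to $P_{(S+T)^{-1}(0)}\hat{v}$.

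I expect the main obstacle to be the bookkeeping in recovering the correct $h_n$: the recursion as stated carries the full $\sigma_n\|\hat{v}-\widehat{w}\|^2$ term, whereas for Lemma \ref{lem6} I need $h_n$ to be an \emph{inner-product} quantity whose sign is controlled by the projection. This requires retracing the derivation of \eqref{LL**} and not fully absorbing the cross term $2\langle w_{n+1}-a_n,\widehat{w}-w_{n+1}\rangle$ into the squared norms, so that a term proportional to $\langle\hat{v}-\widehat{w},w_{n+1}-\widehat{w}\rangle$ survives; handling the self-adaptive ratio $\delta_{n-1}/\delta_n\to1$ and the auxiliary $2\delta_{n-1}\langle Tw_n-Tw_{n-1},\widehat{w}-w_n\rangle$ term inside $t_n$ uniformly along the subsequence is the delicate part, since these must be shown to vanish in the limit to identify $h_n$ cleanly.
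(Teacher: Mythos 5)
Your proposal is correct and follows essentially the same route as the paper's proof: the same Lyapunov quantity $t_n$, Lemma \ref{lem6} applied after sharpening the recursion so that $h_n$ carries the anchor inner product (the paper takes $h_n=\sigma_n\|\hat{v}-\widehat{w}\|^2+2(1-\sigma_n)\langle \hat{v}-\widehat{w}, w_n-\widehat{w}\rangle+2(1-\sigma_n)M\|w_{n+1}-w_n\|$, exactly the \quotes{comparable inner-product term} you anticipate), then asymptotic regularity along the distinguished subsequence, identification of a weak cluster point in $(S+T)^{-1}(0)$ via Lemma \ref{le5}, the projection characterization to get $\limsup_i h_{n_i}\leq 0$, and the coercivity bound \eqref{Pos} to conclude. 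The only cosmetic deviations are that you extract $\|w_{n_i+1}-w_{n_i}\|\to 0$ from the discarded $\bar{\beta}$-terms of the boundedness recursion rather than from the retained coefficient $\frac{1}{2}+\sigma_n^2-2\sigma_n-\frac{\delta_{n-1}}{\delta_n}\bar{r}(1-\sigma_n)$ in \eqref{ABC}, and that you invoke weak--strong closedness of the graph of the maximal monotone operator $S+T$ applied to a vanishing residual, whereas the paper tests directly against arbitrary $(x,y)\in\mathcal{G}(S+T)$ in \eqref{a13} --- these are equivalent arguments.
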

\begin{proof}
Let $\widehat{w}=P_{(S+T)^{-1}(0)}\hat{v}$. Using Lemma \ref{NL}(i), we obtain
\begin{eqnarray}\label{T1}
\|a_n-\widehat{w}\|^2&=&\|\sigma_n(\hat{v}-\widehat{w})+(1-\sigma_n)(w_n-\widehat{w})\|^2\nonumber\\
&=&\sigma_n^2\|\hat{v}-\widehat{w}\|^2+(1-\sigma_n)^2\|w_n-\widehat{w}\|^2+2\sigma_n(1-\sigma_n)\langle \hat{v}-\widehat{w}, w_n-\widehat{w}\rangle
\end{eqnarray}
Similarly, we obtain
\begin{eqnarray}\label{T2}
\|a_n-w_{n+1}\|^2&=&\sigma_n^2\|\hat{v}-w_{n+1}\|^2+(1-\sigma_n)^2\|w_n-w_{n+1}\|^2+2\sigma_n(1-\sigma_n)\langle \hat{v}-w_{n+1}, w_n-w_{n+1}\rangle\nonumber\\
&\geq &\sigma_n^2\|w_{n+1}-\hat{v}\|^2+(1-\sigma_n)^2\|w_{n+1}-w_n\|^2-2\sigma_n(1-\sigma_n)\|w_{n+1}-\hat{v}\| \|w_{n+1}-w_n\|\nonumber\\
&\geq &\sigma_n^2\|w_{n+1}-\hat{v}\|^2+(1-\sigma_n)^2\|w_{n+1}-w_n\|^2-2\sigma_n(1-\sigma_n)M \|w_{n+1}-w_n\|,
\end{eqnarray}
where $M:=\sup\limits_{n\geq 1}\|w_{n+1}-\hat{v}\|$ (recall that in view of Lemma \ref{lem1}, the sequence $\{w_n\}$ is bounded). Now, using \eqref{T1} and \eqref{T2} in \eqref{LL*}, we see that
\begin{eqnarray}\label{ABC}
	&&\|w_{n+1}-\widehat{w}\|^2 +2\delta_{n}\langle Tw_{n+1}-Tw_n,\widehat{w}-w_{n+1}\rangle +\frac{1}{2}\|w_{n+1}-w_n\|^2 \nonumber\\
	&&\leq  \sigma_n^2\|\hat{v}-\widehat{w}\|^2+(1-\sigma_n)^2\|w_n-\widehat{w}\|^2+2\sigma_n(1-\sigma_n)\langle \hat{v}-\widehat{w}, w_n-\widehat{w}\rangle-\sigma_n^2\|w_{n+1}-\hat{v}\|^2\nonumber\\
	&&-(1-\sigma_n)^2\|w_{n+1}-w_n\|^2+2\sigma_n(1-\sigma_n)M \|w_{n+1}-w_n\|+2\delta_{n-1}(1-\sigma_n)\langle Tw_n-Tw_{n-1},\widehat{w}-w_{n}\rangle \nonumber\\
	&&  +(1-\sigma_n)\big(\frac{1}{2}-\bar{\beta}\big)\|w_n-w_{n-1}\|^2+\left[\frac{\delta_{n-1}}{\delta_n}{\bar{r}}(1-\sigma_n)+\frac{1}{2}\right]\|w_{n+1}-w_n\|^2\nonumber\\
	&&\leq  (1-\sigma_n) \left(\|w_n-\widehat{w}\|^2 +2\delta_{n-1}\langle Tw_n-Tw_{n-1},\widehat{w}-w_{n}\rangle +\frac{1}{2}\|w_{n}-w_{n-1}\|^2\right) \nonumber\\
	&&+\sigma_n\left(\sigma_n\|\hat{v}-\widehat{w}\|^2+2(1-\sigma_n)\langle \hat{v}-\widehat{w}, w_n-\widehat{w}\rangle+2(1-\sigma_n)M \|w_{n+1}-w_n\|\right) \nonumber\\
	&& -\left(\frac{1}{2}+\sigma_n^2-2\sigma_n-\frac{\delta_{n-1}}{\delta_n}{\bar{r}}(1-\sigma_n)\right)\|w_{n+1}-w_n\|^2-\bar{\beta}(1-\sigma_n)\|w_n-w_{n-1}\|^2~\forall n\geq n_1.	
\end{eqnarray}	

\noindent Therefore, for all $n\geq n_1$, we have
\begin{eqnarray}\label{STRa}
t_{n+1} &\leq & (1-\sigma_n)t_n+\sigma_n h_n,
\end{eqnarray}
where $h_n=\sigma_n\|\hat{v}-\widehat{w}\|^2+2(1-\sigma_n)\langle \hat{v}-\widehat{w}, w_n-\widehat{w}\rangle+2(1-\sigma_n)M \|w_{n+1}-w_n\|$.

\noindent To conclude the proof, it suffices to show, in view of Lemma \ref{lem6}, that $\limsup\limits_{i\to\infty}h_{n_i}\leq 0$ for each subsequence $\{t_{n_i}\}$ of $\{t_n\}$ such that $\liminf\limits_{i\to \infty}\left(t_{n_i+1}-t_{n_i}\right)\geq 0$.
To this end, let $\{t_{n_i}\}$ be a subsequence of $\{t_n\}$ such that $\liminf\limits_{i\to \infty}\left(t_{n_i+1}-t_{n_i}\right)\geq 0$.
Using \eqref{ABC}, we obtain
\begin{eqnarray*}
&&\limsup_{i\to\infty}\left[\left(\frac{1}{2}+\sigma_{n_i}^2-2\sigma_{n_i}-\frac{\delta_{{n_i}-1}}{\delta_{n_i}}{\bar{r}}(1-\sigma_{n_i})\right)\|w_{{n_i}+1}-w_{n_i}\|^2\right]\\
&&\leq \limsup_{i\to \infty}\left[(t_{n_i}-t_{{n_i}+1})+\sigma_{n_i} (h_{n_i}-t_{n_i})\right]\\
&&\leq -\liminf_{i\to \infty}(t_{{n_i}+1}-t_{n_i})\leq 0.
\end{eqnarray*}
Since $\lim\limits_{i\to\infty}\left(\frac{1}{2}+\sigma_{n_i}^2-2\sigma_{n_i}-\frac{\delta_{{n_i}-1}}{\delta_{n_i}}{\bar{r}}(1-\sigma_{n_i})\right)=\frac{1}{2}-\bar{r}>0$, we get
\begin{eqnarray}\label{Z1}
\lim_{i\to\infty}\|w_{{n_i}+1}-w_{n_i}\|=0.
\end{eqnarray}
Also,
\begin{eqnarray}\label{Z2}
\lim_{i\to\infty} \|a_{n_i}-w_{n_i}\|=\lim_{i\to\infty}\sigma_{n_i}\|w_{n_i}-\hat{v}\|=0.
\end{eqnarray}
Using \eqref{Z1} and \eqref{Z2}, we find that
\begin{eqnarray}\label{Z3}
\lim_{i\to\infty}\|w_{{n_i}+1}-a_{n_i}\|=0.
\end{eqnarray}
Combining the Lipschitz continuity of $T$ and \eqref{Z1}, we get
\begin{eqnarray}\label{Z4}
\lim_{i\to\infty}\|Tw_{{n_i}+1}-Tw_{n_i}\|=0.
\end{eqnarray}
In view of Lemma \ref{lem1}, $\{w_{n_i}\}$ is bounded. Thus, we can choose a subsequence $\{w_{n_{i_j}}\}$ of $\{w_{n_i}\}$ which converges weakly to some $w^*\in \mathcal{H}$ such that
\begin{eqnarray}\label{Z5}
\limsup_{i\to\infty}\langle \hat{v}-\widehat{w}, w_{n_i}-\widehat{w}\rangle=\lim_{j\to\infty}\langle \hat{v}-\widehat{w}, w_{{n_i}_j}-\widehat{w}\rangle=\langle \hat{v}-\widehat{w}, w^*-\widehat{w}\rangle.
\end{eqnarray}

\noindent Now, consider $(x, y)\in \mathcal{G}(S+T)$. Then $\delta_{n_{i_j}}(y-Tx)\in \delta_{n_{i_j}}Sx$. Using this, \eqref{**} and the monotonicity of $S$, we find that
\begin{eqnarray*}
\langle \delta_{n_{i_j}}(y-Tx) -a_{n_{i_j}}+\delta_{n_{i_j}}Tw_{n_{i_j}}+\delta_{{n_{i_j}}-1}(1-\sigma_{n_{i_j}})(Tw_{n_{i_j}}-Tw_{{n_{i_j}}-1})+w_{{n_{i_j}}+1}, x - w_{{n_{i_j}}+1}\rangle \geq 0.
\end{eqnarray*}
Thus, using the monotonicity of $T$, we obtain
\begin{eqnarray}\label{a13}
\langle y, x-w_{{n_{i_j}}+1}\rangle &\geq& \frac{1}{ \delta_{n_{i_j}}} \langle \delta_{n_{i_j}}Tx+a_{n_{i_j}}-\delta_{n_{i_j}}Tw_{n_{i_j}}-\delta_{{n_{i_j}}-1}(1-\sigma_{n_{i_j}})(Tw_{n_{i_j}}-Tw_{{n_{i_j}}-1})-w_{{n_{i_j}}+1}, x - w_{n_j+1}\rangle\nonumber\\
&=& \langle Tx-Tw_{{n_{i_j}}+1}, x - w_{{n_{i_j}}+1}\rangle + \langle Tw_{{n_{i_j}}+1}-Tw_{n_{i_j}}, x - w_{{n_{i_j}}+1}\rangle\nonumber\\
&& +\frac{\delta_{{n_{i_j}}-1}}{ \delta_{n_{i_j}}}(1-\sigma_{n_{i_j}})\langle Tw_{{n_{i_j}}-1}-Tw_{n_{i_j}}, x - w_{{n_{i_j}}+1}\rangle + \frac{1}{ \delta_{n_{i_j}}}\langle a_{n_{i_j}}-w_{{n_{i_j}}+1}, x - w_{{n_{i_j}}+1}\rangle\nonumber\\
&\geq &   \langle Tw_{{n_{i_j}}+1}-Tw_{n_{i_j}}, x - w_{{n_{i_j}}+1}\rangle +\frac{\delta_{{n_{i_j}}-1}}{ \delta_{n_{i_j}}}(1-\sigma_{n_{i_j}})\langle Tw_{{n_{i_j}}-1}-Tw_{n_{i_j}}, x - w_{{n_{i_j}}+1}\rangle\nonumber\\
&&  + \frac{1}{ \delta_{n_{i_j}}}\langle a_{n_{i_j}}-w_{{n_{i_j}}+1}, x - w_{{n_{i_j}}+1}\rangle.
\end{eqnarray}
As $j\to \infty$ in \eqref{a13}, we get, using \eqref{Z3} and \eqref{Z4}, that $\langle y, x-w^*\rangle\geq 0$. Thus, since $S+T$ is maximal monotone  (see Lemma \ref{le5}), we get that $w^*\in (S+T)^{-1}(0)$.\\
Since $\widehat{w}=P_{(S+T)^{-1}(0)}\hat{v}$, it follows from \eqref{Z5} and the characterization of the metric projection that
\begin{eqnarray}\label{Z6a}
\limsup_{i\to\infty}\langle \hat{v}-\widehat{w}, w_{n_i}-\widehat{w}\rangle=\langle \hat{v}-\widehat{w}, w^*-\widehat{w}\rangle\leq 0.
\end{eqnarray}
Using \eqref{Z1}, \eqref{Z6a} and the condition $\lim\limits_{i\to\infty}\sigma_{n_{i}}=0$, we find that $\limsup\limits_{i\to\infty}h_{n_{i}}\leq0$. Thus, in view of the condition $\sum\limits_{n=1}^\infty\sigma_n=\infty$ and Lemma \ref{lem6}, it follows from \eqref{STRa} that  $\lim\limits_{n\to\infty}t_n=0$. Hence, using \eqref{Pos}, we conclude that $\{w_n\}$ converges strongly to $\widehat{w}=P_{(S+T)^{-1}(0)}\hat{v}$, as asserted.
\end{proof}

\hfill

\noindent The step size defined in \eqref{a11} makes it possible for  Algorithm \ref{AL1} to be applied in practice even when the Lipschitz constant $\mathsf{L}$ of $T$ is not known. However, when this constant is known or can be calculated, we simply adopt the following variant of Algorithm \ref{AL1}:

\hfill

\hrule \hrule
\begin{alg}\label{AL1A**}  Let $\bar{\delta}\in \Big(0,~\frac{1}{2L}\Big)$ and choose the sequence $\{\sigma_n\}$ in $(0, 1)$. For arbitrary $\hat{v}, w_0,w_1 \in \mathcal{H}$, let the sequence $\{w_n\}$ be generated by
	\hrule \hrule
	 \begin{eqnarray*}
w_{n+1}=J^{S}_{\bar{\delta}}\big(\sigma_n \hat{v}+(1-\sigma_n) w_n-\bar{\delta} Tw_n-\bar{\delta}(1-\sigma_n)(Tw_{n}-Tw_{n-1})\big),~ n\geq 1.
\end{eqnarray*}
	\hrule\hrule
\end{alg}

\hfill

\begin{remark}
Using arguments similar to those in Lemma \ref{lem1} and Theorem \ref{THM1},  we can establish that the sequence $\{w_n\}$ generated by Algorithm \ref{AL1A**} converges strongly to $P_{(S+T)^{-1}(0)}\hat{v}$.
\end{remark}

\hfill

\section{Inertial Modified Forward-Reflected-Backward Splitting Method}\label{Se4}
\noindent In this section we first propose and then study the following inertial variant of Algorithm \ref{AL1}.
\hrule \hrule
\begin{alg}\label{AL2} Let $\delta_0,\delta_1>0$, $\bar{\vartheta}\in [0, 1)$, $\bar{r}\in \Big(\bar{\beta},~\frac{1-2\bar{\beta}}{2}\Big)$ with $\bar{\beta}\in (0, \frac{1}{4})$, and choose sequences $\{\sigma_n\}$ in $(0, 1)$ and $\{c_n\}$ in $[0, \infty)$ such that $\sum_{n=1}^{\infty} c_n< \infty$. For arbitrary $\hat{v}, w_0,w_1 \in \mathcal{H}$, let the sequence $\{w_n\}$ be generated by
	\hrule \hrule
	 \begin{eqnarray*}
w_{n+1}=J^{S}_{\delta_n}\left(\sigma_n \hat{v}+(1-\sigma_n)(w_n+\bar{\vartheta}(w_n-w_{n-1}))-\delta_nTw_n-\delta_{n-1}(1-\sigma_n)(Tw_{n}-Tw_{n-1})\right),~\forall n\geq 1,
\end{eqnarray*}
where the step size $\delta_n$ is defined by \eqref{a11}.
\hrule\hrule
\end{alg}

\noindent Algorithm \ref{AL2} combines the anchored step, inertial extrapolation step and the forward-reflected-backward splitting technique. Therefore, it can be referred to as an {\it inertial forward-reflected-anchored-backward splitting method}.

\hfill

\begin{lemma}\label{lem2} Let $\{w_n\}$ be generated by Algorithm \ref{AL2} when Assumption \ref{AS1} holds. If $0\leq \bar{\vartheta}<\min\left\lbrace\frac{\bar{\beta}}{2},~\frac{\frac{1}{2}-\bar{r}}{2}\right\rbrace$ and $\lim\limits_{n\to\infty}\sigma_n=0$, then the sequence $\{w_n\}$ is bounded.
\end{lemma}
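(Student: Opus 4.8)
The plan is to mimic the boundedness argument of Lemma \ref{lem1}, the difference being that the anchored term now acts on the inertially extrapolated point $w_n+\bar{\vartheta}(w_n-w_{n-1})$ rather than on $w_n$ itself. First I would set $\widehat{w}\in(S+T)^{-1}(0)$, write $y_n:=w_n+\bar{\vartheta}(w_n-w_{n-1})$ and $a_n:=\sigma_n\hat{v}+(1-\sigma_n)y_n$, and record from the resolvent inclusion that $a_n-\delta_nTw_n-\delta_{n-1}(1-\sigma_n)(Tw_n-Tw_{n-1})-w_{n+1}\in\delta_n Sw_{n+1}$. Testing against $w_{n+1}-\widehat{w}$ using monotonicity of $S$ and $T$, exactly as in \eqref{M1}--\eqref{LL*}, and invoking the step-size estimate \eqref{a3A} together with Remark \ref{VSS} to control the term $\frac{\delta_{n-1}}{\delta_n}\bar{r}$ below $\frac12-\bar{\beta}$ for $n\geq n_0$, I would reach the analogue of \eqref{LL*} with $a_n$ now built from $y_n$.

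The new work is the expansion of $\|a_n-\widehat{w}\|^2-\|a_n-w_{n+1}\|^2$. Here I would use Lemma \ref{NL}(ii) (the convex-combination identity) rather than the simpler computation in \eqref{H1}--\eqref{H**}, since $a_n$ is a convex combination of $\hat{v}$ and $y_n$. This produces a term $(1-\sigma_n)\|y_n-\widehat{w}\|^2$ in place of $(1-\sigma_n)\|w_n-\widehat{w}\|^2$, and I would then estimate
\begin{eqnarray*}
\|y_n-\widehat{w}\|^2&=&\|(w_n-\widehat{w})+\bar{\vartheta}(w_n-w_{n-1})\|^2\\
&\leq&\|w_n-\widehat{w}\|^2+2\bar{\vartheta}\|w_n-w_{n-1}\|\,\|w_n-\widehat{w}\|+\bar{\vartheta}^2\|w_n-w_{n-1}\|^2.
\end{eqnarray*}
The cross term is the crux: it must be absorbed into the negative quantities $-\bar{\beta}(1-\sigma_n)\|w_n-w_{n-1}\|^2$ that the Lyapunov functional $t_n$ supplies, and this is precisely where the hypothesis $\bar{\vartheta}<\frac{\bar{\beta}}{2}$ is used. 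Bounding $2\bar{\vartheta}\|w_n-w_{n-1}\|\,\|w_n-\widehat{w}\|\leq\bar{\vartheta}(\|w_n-w_{n-1}\|^2+\|w_n-\widehat{w}\|^2)$ and using boundedness of $\{w_n-\widehat{w}\}$ (or folding it against the available negative margins), the second condition $\bar{\vartheta}<\frac{\frac12-\bar{r}}{2}$ should guarantee that, after passing to $n\geq n_1$ where $\frac12-\sigma_n-\frac{\delta_{n-1}}{\delta_n}\bar{r}(1-\sigma_n)>\bar{\beta}$, the coefficient of $\|w_{n+1}-w_n\|^2$ stays nonpositive and the inertial surplus is dominated.

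With the same functional $t_n:=\|w_n-\widehat{w}\|^2+2\delta_{n-1}\langle Tw_n-Tw_{n-1},\widehat{w}-w_n\rangle+\frac12\|w_n-w_{n-1}\|^2$, the positivity lower bound \eqref{Pos} carries over verbatim, so $t_n\geq0$ for $n\geq n_0$. The target is a recursion of the form $t_{n+1}\leq(1-\sigma_n)t_n+\sigma_n\|\hat{v}-\widehat{w}\|^2+r_n$ with a summable (in fact, absorbable) remainder $r_n$ coming from the inertial cross term, after which Lemma \ref{l5} yields boundedness of $\{t_n\}$ and hence, via \eqref{Pos}, of $\{w_n\}$. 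I expect the main obstacle to be the bookkeeping in that single recursion: one must verify that both smallness conditions on $\bar{\vartheta}$ simultaneously keep the $\|w_{n+1}-w_n\|^2$ coefficient nonpositive and force the extra inertial contribution to fit inside the $\sigma_nM$ slot required by Lemma \ref{l5}, rather than contributing an unsummable positive term.
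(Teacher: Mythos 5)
Your setup and the reduction to the analogue of \eqref{LL*} are exactly the paper's, but your treatment of the inertial term has a genuine gap. After Young's inequality your estimate of $\|y_n-\widehat{w}\|^2$ leaves the surplus $(1-\sigma_n)\bar{\vartheta}\|w_n-\widehat{w}\|^2$ on the right-hand side, so with the \emph{unmodified} functional $t_n$ the recursion reads $t_{n+1}\leq(1-\sigma_n)(1+\bar{\vartheta})\|w_n-\widehat{w}\|^2+\cdots$, whose leading coefficient exceeds $1$ for all large $n$ since $\sigma_n\to0$ and $\bar{\vartheta}>0$ is fixed. This surplus cannot be \quotes{folded against the available negative margins}: the only negative quantities the inequality supplies are multiples of $\|w_n-w_{n-1}\|^2$ and $\|w_{n+1}-w_n\|^2$, not of $\|w_n-\widehat{w}\|^2$; it is not summable, so it does not fit the $r_n$ slot of Lemma \ref{l5}; and it is not dominated by $\sigma_n M$ unless $\{w_n\}$ is already known to be bounded --- which is precisely the conclusion of the lemma, so invoking \quotes{boundedness of $\{w_n-\widehat{w}\}$} is circular. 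Your claim that \quotes{the positivity lower bound \eqref{Pos} carries over verbatim} confirms you are keeping the old $t_n$, and with it the target recursion is unreachable.

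The missing idea, which is how the paper proceeds, is to take the \emph{exact} identity of Lemma \ref{NL}(ii),
\begin{equation*}
\|b_n-\widehat{w}\|^2=(1+\bar{\vartheta})\|w_n-\widehat{w}\|^2-\bar{\vartheta}\|w_{n-1}-\widehat{w}\|^2+\bar{\vartheta}(1+\bar{\vartheta})\|w_n-w_{n-1}\|^2,
\end{equation*}
with no cross term and no inequality, and correspondingly to modify the Lyapunov functional to
$\hat{t}_n:=\|w_n-\widehat{w}\|^2-\bar{\vartheta}\|w_{n-1}-\widehat{w}\|^2+2\delta_{n-1}\langle Tw_n-Tw_{n-1},\widehat{w}-w_{n}\rangle+\frac{1}{2}\|w_n-w_{n-1}\|^2$,
so that the excess $\bar{\vartheta}\|w_n-\widehat{w}\|^2$ is carried telescopically from one step to the next instead of being absorbed. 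The price is that nonnegativity of $\hat{t}_n$ is no longer \eqref{Pos} verbatim: using $\|w_{n-1}-\widehat{w}\|^2\leq 2\|w_n-w_{n-1}\|^2+2\|w_n-\widehat{w}\|^2$ one gets
$\hat{t}_n\geq\bigl(\frac{1}{2}-\frac{\delta_{n-1}}{\delta_n}\bar{r}-2\bar{\vartheta}\bigr)\bigl[\|w_n-\widehat{w}\|^2+\|w_n-w_{n-1}\|^2\bigr]$,
and it is exactly here that the hypothesis $\bar{\vartheta}<\frac{\frac{1}{2}-\bar{r}}{2}$ is consumed --- not, as you suggest, in controlling the $\|w_{n+1}-w_n\|^2$ coefficient, for which the weaker bound $\bar{\vartheta}<\frac{1}{2}-\bar{r}$ already suffices via $\lim_{n}\bigl[(1-\sigma_n)(1-\bar{\vartheta})-\frac{\delta_{n-1}}{\delta_n}\bar{r}(1-\sigma_n)-\frac{1}{2}\bigr]=\frac{1}{2}-\bar{\vartheta}-\bar{r}>0$. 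The condition $\bar{\vartheta}<\frac{\bar{\beta}}{2}$ then absorbs the combined inertial surplus $2\bar{\vartheta}(1-\sigma_n)\|w_n-w_{n-1}\|^2$, coming from the identity above together with the lower bound $\|w_{n+1}-b_n\|^2\geq(1-\bar{\vartheta})\|w_{n+1}-w_n\|^2+(\bar{\vartheta}^2-\bar{\vartheta})\|w_n-w_{n-1}\|^2$, into $-\bar{\beta}(1-\sigma_n)\|w_n-w_{n-1}\|^2$. This yields the clean recursion $\hat{t}_{n+1}\leq(1-\sigma_n)\hat{t}_n+\sigma_n\|\hat{v}-\widehat{w}\|^2$ for large $n$, whence Lemma \ref{l5} gives boundedness of $\{\hat{t}_n\}$ and the coercive lower bound on $\hat{t}_n$ gives boundedness of $\{w_n\}$.
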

	\begin{proof}
		Let $\widehat{w}\in (S+T)^{-1}(0)$ and set $a_n:=\sigma_n\hat{v}+(1-\sigma_n) b_n$, where $b_n=w_n+\bar{\vartheta}(w_n-w_{n-1})$. Then by arguments similar to those used in the proof of Lemma \ref{lem1} up to \eqref{LL**}, we obtain
	\begin{eqnarray}\label{LL**2}
	&&\|w_{n+1}-\widehat{w}\|^2 +2\delta_{n}\langle Tw_{n+1}-Tw_n,\widehat{w}-w_{n+1}\rangle +\frac{1}{2}\|w_{n+1}-w_n\|^2 \nonumber\\
	&&\leq  (1-\sigma_n)\left[\|b_n-\widehat{w}\|^2 +2\delta_{n-1}\langle Tw_n-Tw_{n-1},\widehat{w}-w_{n}\rangle +\frac{1}{2}\|w_n-w_{n-1}\|^2\right] \nonumber\\
&&+\sigma_n\|\hat{v}-\widehat{w}\|^2-(1-\sigma_n)\|w_{n+1}-b_n\|^2 +\left[\frac{\delta_{n-1}}{\delta_n}{\bar{r}}(1-\sigma_n)+\frac{1}{2}\right]\|w_{n+1}-w_n\|^2\nonumber\\
&&-\bar{\beta}(1-\sigma_n)\|w_n-w_{n-1}\|^2~\forall n\geq n_0.	
\end{eqnarray}	
It follows from Lemma \ref{NL}(ii) that
\begin{eqnarray}\label{NL1}
\|b_n-\widehat{w}\|^2&=&\|(1+\bar{\vartheta})(w_n-\widehat{w})-\bar{\vartheta}(w_{n-1}-\widehat{w})\|^2\nonumber\\
&=&(1+\bar{\vartheta})\|w_n-\widehat{w}\|^2-\bar{\vartheta}\|w_{n-1}-\widehat{w}\|^2+\bar{\vartheta}(1+\bar{\vartheta})\|w_n-w_{n-1}\|^2.
\end{eqnarray}
From Lemma \ref{NL}(i) it follows that
\begin{eqnarray}\label{NL2}
\|w_{n+1}-b_n\|^2&=&\|(w_{n+1}-w_n)-\bar{\vartheta}(w_n-w_{n-1})\|^2\nonumber\\
&=&\|w_{n+1}-w_n\|^2+\bar{\vartheta}^2\|w_n-w_{n-1}\|^2-2\bar{\vartheta}\langle w_{n+1}-w_n, w_n-w_{n-1}\rangle\nonumber\\
&\geq &(1-\bar{\vartheta})\|w_{n+1}-w_n\|^2+(\bar{\vartheta}^2-\bar{\vartheta})\|w_n-w_{n-1}\|^2.
\end{eqnarray}
Using \eqref{NL1} and \eqref{NL2} in \eqref{LL**2}, we find that
\begin{eqnarray*}
	&&\|w_{n+1}-\widehat{w}\|^2 +2\delta_{n}\langle Tw_{n+1}-Tw_n,\widehat{w}-w_{n+1}\rangle +\frac{1}{2}\|w_{n+1}-w_n\|^2 \nonumber\\
	&&\leq  (1-\sigma_n)\left[(1+\bar{\vartheta})\|w_n-\widehat{w}\|^2 -\bar{\vartheta}\|w_{n-1}-\widehat{w}\|^2+2\delta_{n-1}\langle Tw_n-Tw_{n-1},\widehat{w}-w_{n}\rangle +\frac{1}{2}\|w_n-w_{n-1}\|^2\right] \nonumber\\
&&+(1-\sigma_n)\bar{\vartheta}(1+\bar{\vartheta})\|w_n-w_{n-1}\|^2+\sigma_n\|\hat{v}-\widehat{w}\|^2-(1-\sigma_n)(1-\bar{\vartheta})\|w_{n+1}-w_n\|^2\nonumber\\
&&-(1-\sigma_n)(\bar{\vartheta}^2-\bar{\vartheta})\|w_n-w_{n-1}\|^2+\left[\frac{\delta_{n-1}}{\delta_n}{\bar{r}}(1-\sigma_n)+\frac{1}{2}\right]\|w_{n+1}-w_n\|^2-\bar{\beta}(1-\sigma_n)\|w_n-w_{n-1}\|^2.
\end{eqnarray*}	
This implies that
\begin{eqnarray}\label{LL**2*}
	&&\|w_{n+1}-\widehat{w}\|^2-\bar{\vartheta}\|w_n-\widehat{w}\|^2 +2\delta_{n}\langle Tw_{n+1}-Tw_n,\widehat{w}-w_{n+1}\rangle +\frac{1}{2}\|w_{n+1}-w_n\|^2 \nonumber\\
	&&\leq  (1-\sigma_n)\left[\|w_n-\widehat{w}\|^2-\bar{\vartheta}\|w_{n-1}-\widehat{w}\|^2 +2\delta_{n-1}\langle Tw_n-Tw_{n-1},\widehat{w}-w_{n}\rangle +\frac{1}{2}\|w_n-w_{n-1}\|^2\right] \nonumber\\
&&+\sigma_n\|\hat{v}-\widehat{w}\|^2-(1-\sigma_n)(\bar{\beta}-2\bar{\vartheta})\|w_n-w_{n-1}\|^2\nonumber\\
&&-\left[(1-\sigma_n)(1-\bar{\vartheta})-\frac{\delta_{n-1}}{\delta_n}{\bar{r}}(1-\sigma_n)-\frac{1}{2}\right]\|w_{n+1}-w_n\|^2~\forall n\geq n_0.
\end{eqnarray}

\noindent Set $\hat{t}_n:=\|w_n-\widehat{w}\|^2-\bar{\vartheta}\|w_{n-1}-\widehat{w}\|^2 +2\delta_{n-1}\langle Tw_n-Tw_{n-1},\widehat{w}-w_{n}\rangle +\frac{1}{2}\|w_n-w_{n-1}\|^2$. \\
Then for all $n\geq n_0$, we have
\begin{eqnarray}\label{Pos2}
\hat{t}_n&\geq & \|w_n-\widehat{w}\|^2-\bar{\vartheta}\|w_{n-1}-\widehat{w}\|^2 -\frac{\delta_{n-1}}{\delta_{n}} \bar{r}\left(\| w_n-w_{n-1}\|^2+ \|w_{n}-\widehat{w}\|^2\right) +\frac{1}{2}\|w_n-w_{n-1}\|^2\nonumber\\
&= & \left(1-\frac{\delta_{n-1}}{\delta_{n}} \bar{r}\right)\|w_n-\widehat{w}\|^2-\bar{\vartheta}\|w_{n-1}-\widehat{w}\|^2  +\left(\frac{1}{2}-\frac{\delta_{n-1}}{\delta_{n}} \bar{r}\right)\|w_n-w_{n-1}\|^2.
\end{eqnarray}
It follows from Lemma \ref{NL}(i) that
\begin{eqnarray}\label{NL3}
\|w_{n-1}-\widehat{w}\|^2&=&\|(w_{n-1}-w_n)+(w_n-\widehat{w})\|^2\nonumber\\
&=&\|w_{n-1}-w_n\|^2+\|w_n-\widehat{w}\|^2+2\langle w_{n-1}-w_n, w_n-\widehat{w}\rangle\nonumber\\
&\leq & 2\|w_n-w_{n-1}\|^2+2\|w_n-\widehat{w}\|^2.
\end{eqnarray}
Now, using \eqref{NL3} in \eqref{Pos2}, we see that
\begin{eqnarray}\label{Pos2*}
\hat{t}_n&\geq  & \left(1-\frac{\delta_{n-1}}{\delta_{n}} \bar{r}\right)\|w_n-\widehat{w}\|^2-\bar{\vartheta}\left[2\|w_n-w_{n-1}\|^2+2\|w_n-\widehat{w}\|^2\right] +\left(\frac{1}{2}-\frac{\delta_{n-1}}{\delta_{n}} \bar{r}\right)\|w_n-w_{n-1}\|^2\nonumber\\
&\geq  & \left(\frac{1}{2}-\frac{\delta_{n-1}}{\delta_{n}} \bar{r}-2\bar{\vartheta}\right)\left[\|w_n-\widehat{w}\|^2+\|w_n-w_{n-1}\|^2\right].
\end{eqnarray}
Since $\bar{\vartheta}<\frac{\frac{1}{2}-\bar{r}}{2}$, we have
$\lim\limits_{n\to\infty}\left(\frac{1}{2}-\frac{\delta_{n-1}}{\delta_{n}} \bar{r}-2\bar{\vartheta}\right)=\frac{1}{2}-\bar{r}-2\bar{\vartheta}>0$. Thus, there exists $n_1\in \mathbb{N}$, $n_1\geq n_0$ such that $\frac{1}{2}-\frac{\delta_{n-1}}{\delta_{n}} \bar{r}-2\bar{\vartheta}>0~\forall n\geq n_1.$ Hence, we get from \eqref{Pos2*} that $\hat{t}_n\geq 0$ for all $n\geq n_1\geq n_0$.

\noindent On the other hand, since $\lim\limits_{n\to\infty}\sigma_n=0$ and $\bar{\vartheta}<\frac{\bar{\beta}}{2}$, we also have
$\lim\limits_{n\to\infty}\left[(1-\sigma_n)(\bar{\beta}-2\bar{\vartheta})\right]=\bar{\beta}-2\bar{\vartheta}>0$. Thus, there exists $n_2\geq n_1$ such that $(1-\sigma_n)(\bar{\beta}-2\bar{\vartheta})>0~\forall n\geq n_2.$ \\
Similarly, there exists $n_3\geq n_2$ such that $(1-\sigma_n)(1-\bar{\vartheta})-\frac{\delta_{n-1}}{\delta_n}{\bar{r}}(1-\sigma_n)-\frac{1}{2}>0$ for all $n\geq n_3$.\\
 Using these facts in \eqref{LL**2*}, we obtain
\begin{eqnarray*}
\hat{t}_{n+1}&\leq & (1-\sigma_n)\hat{t}_n +\sigma_n\|\hat{v}-\widehat{w}\|^2~\forall n\geq n_3,
\end{eqnarray*}
which implies that $\{\hat{t}_n\}$ is bounded. It then follows from \eqref{Pos2*} that the sequence $\{w_n\}$ is indeed bounded, as claimed.
\end{proof}

\hfill

\begin{theorem}\label{THM2} Let $\{w_n\}$ be generated by Algorithm \ref{AL2} when Assumption \ref{AS1} holds. If $0\leq \bar{\vartheta}<\min\left\lbrace\frac{\bar{\beta}}{2},~\frac{\frac{1}{2}-\bar{r}}{2}\right\rbrace$, $\lim\limits_{n\to\infty}\sigma_n=0$ and $\sum\limits_{n=1}^\infty\sigma_n=\infty$, then $\{w_n\}$ converges strongly to $P_{(S+T)^{-1}(0)}\hat{v}$.
\end{theorem}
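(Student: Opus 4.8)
\emph{Proof plan.} The plan is to follow the proof of Theorem \ref{THM1} almost verbatim, the only new ingredient being the bookkeeping forced by the inertial term $b_n=w_n+\bar{\vartheta}(w_n-w_{n-1})$. I would set $\widehat{w}=P_{(S+T)^{-1}(0)}\hat{v}$ and, writing $a_n=\sigma_n\hat{v}+(1-\sigma_n)b_n$, reproduce inequality \eqref{LL*} (which enters $a_n$ only through the symbol and is therefore unchanged). The goal is to reach a Halpern-type recursion $\hat{t}_{n+1}\leq(1-\sigma_n)\hat{t}_n+\sigma_n\hat{h}_n$ valid for large $n$, where $\hat{t}_n$ is exactly the quantity introduced in Lemma \ref{lem2} and $\hat{h}_n=\sigma_n\|\hat{v}-\widehat{w}\|^2+2(1-\sigma_n)\langle\hat{v}-\widehat{w},b_n-\widehat{w}\rangle+2(1-\sigma_n)M\|w_{n+1}-w_n\|$ with $M=\sup_n\|w_{n+1}-\hat{v}\|$ (finite by Lemma \ref{lem2}), so that Lemma \ref{lem6} applies once $\limsup_i\hat{h}_{n_i}\leq 0$ is established along the relevant subsequences.

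To obtain this recursion I would not use the crude bound \eqref{H**} (as was done for boundedness in Lemma \ref{lem2}), but instead expand $\|a_n-\widehat{w}\|^2$ and lower-bound $\|a_n-w_{n+1}\|^2$ exactly as in \eqref{T1}--\eqref{T2}, now with $b_n$ in place of $w_n$; this is what produces the cross term $\langle\hat{v}-\widehat{w},b_n-\widehat{w}\rangle$ driving the anchoring. Substituting into \eqref{LL*} and then inserting the identity \eqref{NL1} for $\|b_n-\widehat{w}\|^2$ and the estimate \eqref{NL2} for $\|w_{n+1}-b_n\|^2$ reassembles the telescoping quantity $\hat{t}_n$ and, as in \eqref{Pos2*}, leaves behind two spare negative terms whose coefficients of $\|w_{n+1}-w_n\|^2$ and $\|w_n-w_{n-1}\|^2$ tend (since $\sigma_n\to 0$) to $\frac{1}{2}-\bar{r}-\bar{\vartheta}>0$ and $\bar{\beta}-2\bar{\vartheta}>0$, respectively, under the standing hypotheses $\bar{\vartheta}<(\frac{1}{2}-\bar{r})/2$ and $\bar{\vartheta}<\bar{\beta}/2$. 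Discarding these nonnegative terms yields the desired recursion.

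For the Halpern step I would take a subsequence $\{\hat{t}_{n_i}\}$ with $\liminf_i(\hat{t}_{n_i+1}-\hat{t}_{n_i})\geq 0$ and feed the full (undiscarded) inequality into the telescoping estimate used for \eqref{ABC}. Because now \emph{both} spare coefficients have strictly positive limits, the same $\limsup$ computation forces simultaneously $\|w_{n_i+1}-w_{n_i}\|\to 0$ and $\|w_{n_i}-w_{n_i-1}\|\to 0$. From these I recover $\|a_{n_i}-w_{n_i}\|\to 0$, since $a_n-w_n=\sigma_n(\hat{v}-w_n)+(1-\sigma_n)\bar{\vartheta}(w_n-w_{n-1})$, hence $\|w_{n_i+1}-a_{n_i}\|\to 0$, and $\|Tw_{n_i+1}-Tw_{n_i}\|\to 0$ by Lipschitz continuity, exactly as in \eqref{Z1}--\eqref{Z4}. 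Passing to a weakly convergent subsequence $w_{n_{i_j}}\rightharpoonup w^*$ and using the resolvent inclusion together with the maximal monotonicity of $S+T$ (Lemma \ref{le5}) gives $w^*\in(S+T)^{-1}(0)$, whence the projection characterization yields $\limsup_i\langle\hat{v}-\widehat{w},w_{n_i}-\widehat{w}\rangle\leq 0$.

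The one genuinely new point, and the part I expect to be most delicate, is disposing of the inertial contribution inside $\hat{h}_n$. Splitting $\langle\hat{v}-\widehat{w},b_n-\widehat{w}\rangle=\langle\hat{v}-\widehat{w},w_n-\widehat{w}\rangle+\bar{\vartheta}\langle\hat{v}-\widehat{w},w_n-w_{n-1}\rangle$, the second summand is controlled by $\|w_n-w_{n-1}\|$, which is precisely why extracting $\|w_{n_i}-w_{n_i-1}\|\to 0$ above was essential; together with $\sigma_{n_i}\to 0$ and $\|w_{n_i+1}-w_{n_i}\|\to 0$ this gives $\limsup_i\hat{h}_{n_i}\leq 0$. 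Lemma \ref{lem6} then yields $\hat{t}_n\to 0$, and the lower bound \eqref{Pos2*}, whose constant $\frac{1}{2}-\bar{r}-2\bar{\vartheta}$ is positive, upgrades this to $\|w_n-\widehat{w}\|\to 0$, i.e.\ strong convergence to $P_{(S+T)^{-1}(0)}\hat{v}$. The main obstacle is thus not any single hard estimate but the careful coupling: the same smallness conditions on $\bar{\vartheta}$ must simultaneously keep both spare quadratic coefficients positive (to extract the two displacement limits) and keep the lower bound \eqref{Pos2*} coercive.
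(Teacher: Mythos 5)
Your proposal is correct and follows essentially the same route as the paper's proof of Theorem \ref{THM2}: the same Lyapunov quantity $\hat{t}_n$, the same Halpern recursion $\hat{t}_{n+1}\leq(1-\sigma_n)\hat{t}_n+\sigma_n\hat{h}_n$ obtained by redoing the expansions \eqref{T1}--\eqref{T2} with $b_n$ and inserting \eqref{NL1}--\eqref{NL2}, and the same subsequence argument via Lemma \ref{lem6} together with the coercive lower bound \eqref{Pos2*}. The only (harmless) discrepancy is that the expansion with $b_n$ in place of $w_n$ yields the term $M\|w_{n+1}-b_n\|$ in $\hat{h}_n$ (as in the paper's \eqref{ABC2a}) rather than your $M\|w_{n+1}-w_n\|$; since both displacements $\|w_{n_i+1}-w_{n_i}\|$ and $\|w_{n_i}-w_{n_i-1}\|$ vanish along the relevant subsequences --- a point you actually justify more explicitly than the paper, by invoking both spare negative coefficients in \eqref{ABC2} to extract the two displacement limits --- the conclusion is unaffected.
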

\begin{proof}
Let $\widehat{w}=P_{(S+T)^{-1}(0)}\hat{v}$. Then, using arguments similar to those used in the proof of Theorem \ref{THM1} up to \eqref{ABC}, we obtain
\begin{eqnarray}\label{ABC2a}
	&&\|w_{n+1}-\widehat{w}\|^2 +2\delta_{n}\langle Tw_{n+1}-Tw_n,\widehat{w}-w_{n+1}\rangle +\frac{1}{2}\|w_{n+1}-w_n\|^2 \nonumber\\
		&&\leq  (1-\sigma_n) \left(\|b_n-\widehat{w}\|^2 +2\delta_{n-1}\langle Tw_n-Tw_{n-1},\widehat{w}-w_{n}\rangle +\frac{1}{2}\|w_{n}-w_{n-1}\|^2\right) \nonumber\\
	&&+\sigma_n\left(\sigma_n\|\hat{v}-\widehat{w}\|^2+2(1-\sigma_n)\langle \hat{v}-\widehat{w}, b_n-\widehat{w}\rangle+2(1-\sigma_n)M \|w_{n+1}-b_n\|\right) \nonumber\\
	&&-(1-\sigma_n)^2\|w_{n+1}-b_n\|^2 +\left(\frac{\delta_{n-1}}{\delta_n}{\bar{r}}(1-\sigma_n)+\frac{1}{2}\right)\|w_{n+1}-w_n\|^2-\bar{\beta}(1-\sigma_n)\|w_n-w_{n-1}\|^2~\forall n\geq n_3.	
\end{eqnarray}	
Next, using \eqref{NL1} and \eqref{NL2} in \eqref{ABC2a}, we see that
\begin{eqnarray*}
	&&\|w_{n+1}-\widehat{w}\|^2 +2\delta_{n}\langle Tw_{n+1}-Tw_n,\widehat{w}-w_{n+1}\rangle +\frac{1}{2}\|w_{n+1}-w_n\|^2 \nonumber\\
		&&\leq  (1-\sigma_n) \left((1+\bar{\vartheta})\|w_n-\widehat{w}\|^2-\bar{\vartheta}\|w_{n-1}-\widehat{w}\|^2 +2\delta_{n-1}\langle Tw_n-Tw_{n-1},\widehat{w}-w_{n}\rangle +\frac{1}{2}\|w_{n}-w_{n-1}\|^2\right) \nonumber\\
	&&+(1-\sigma_n)\bar{\vartheta}(1+\bar{\vartheta})\|w_n-w_{n-1}\|^2+\sigma_n\left(\sigma_n\|\hat{v}-\widehat{w}\|^2+2(1-\sigma_n)\langle \hat{v}-\widehat{w}, b_n-\widehat{w}\rangle+2(1-\sigma_n)M \|b_n-w_{n+1}\|\right) \nonumber\\
	&&-(1-\sigma_n)^2\left[(1-\bar{\vartheta})\|w_{n+1}-w_n\|^2+(\bar{\vartheta}^2-\bar{\vartheta})\|w_n-w_{n-1}\|^2\right] \nonumber\\
	&&+\left(\frac{\delta_{n-1}}{\delta_n}{\bar{r}}(1-\sigma_n)+\frac{1}{2}\right)\|w_{n+1}-w_n\|^2-\bar{\beta}(1-\sigma_n)\|w_n-w_{n-1}\|^2~\forall n\geq n_3.	
\end{eqnarray*}	

\noindent This implies that
\begin{eqnarray}\label{ABC2}
	&&\|w_{n+1}-\widehat{w}\|^2-\bar{\vartheta}\|w_n-\widehat{w}\|^2 +2\delta_{n}\langle Tw_{n+1}-Tw_n,\widehat{w}-w_{n+1}\rangle +\frac{1}{2}\|w_{n+1}-w_n\|^2 \nonumber\\
		&&\leq  (1-\sigma_n) \left(\|w_n-\widehat{w}\|^2-\bar{\vartheta}\|w_{n-1}-\widehat{w}\|^2 +2\delta_{n-1}\langle Tw_n-Tw_{n-1},\widehat{w}-w_{n}\rangle +\frac{1}{2}\|w_{n}-w_{n-1}\|^2\right) \nonumber\\
	&&+\sigma_n\left(\sigma_n\|\hat{v}-\widehat{w}\|^2+2(1-\sigma_n)\langle \hat{v}-\widehat{w}, b_n-\widehat{w}\rangle+2(1-\sigma_n)M \|b_n-w_{n+1}\|\right) \nonumber\\
		&&-\left((1-\sigma_n)^2(1-\bar{\vartheta})-\frac{\delta_{n-1}}{\delta_n}{\bar{r}}(1-\sigma_n)-\frac{1}{2}\right)\|w_{n+1}-w_n\|^2\nonumber\\
	&&-(1-\sigma_n)\left(\bar{\beta}-2\bar{\vartheta}-\sigma_n(\bar{\vartheta}^2-\bar{\vartheta})\right)\|w_n-w_{n-1}\|^2~\forall n\geq n_3.	
\end{eqnarray}	

\noindent Therefore, for all $n\geq n_3$, we find that
\begin{eqnarray}\label{STR}
\hat{t}_{n+1} &\leq & (1-\sigma_n)\hat{t}_n+\sigma_n \hat{h}_n,
\end{eqnarray}
where $\hat{h}_n=\sigma_n\|\hat{v}-\widehat{w}\|^2+2(1-\sigma_n)\langle \hat{v}-\widehat{w}, b_n-\widehat{w}\rangle+2(1-\sigma_n)M \|w_{n+1}-b_n\|$.

\noindent As in the proof of Theorem \ref{THM1}, let $\{\hat{t}_{n_i}\}$ be a subsequence of $\{\hat{t}_n\}$
such that $\liminf\limits_{i\to \infty}\left(\hat{t}_{n_i+1}-\hat{t}_{n_i}\right)\geq 0$.
Then it follows from \eqref{ABC2} that
\begin{eqnarray*}
&&\limsup_{i\to\infty}\left[\left((1-\sigma_{n_i})^2(1-\bar{\vartheta})-\frac{\delta_{{n_i}-1}}{\delta_{n_i}}{\bar{r}}(1-\sigma_{n_i})-\frac{1}{2}\right)\|w_{{{n_i}}+1}-w_{{n_i}}\|^2\right]\\
&&\leq \limsup_{i\to \infty}\left[(\hat{t}_{n_i}-\hat{t}_{{n_i}+1})+\sigma_{n_i} (\hat{h}_{n_i}-\hat{t}_{n_i})\right]\\
&&\leq -\liminf_{i\to \infty}(\hat{t}_{{n_i}+1}-\hat{t}_{n_i})\leq 0.
\end{eqnarray*}
Since $\lim\limits_{i\to\infty}\left((1-\sigma_{n_i})^2(1-\bar{\vartheta})-\frac{\delta_{{n_i}-1}}{\delta_{n_i}}{\bar{r}}(1-\sigma_{n_i})-\frac{1}{2}\right)=\frac{1}{2}-\bar{\vartheta}-\bar{r}>0$,
we get \begin{eqnarray}\label{Z12}
\lim_{i\to\infty}\|w_{{n_i}+1}-w_{n_i}\|=0.
\end{eqnarray}
Thus,
\begin{eqnarray}\label{Z12*}
\lim_{i\to\infty}\|b_{{n_i}}-w_{n_i}\|=\lim_{i\to\infty}\bar{\vartheta}\|w_{n_i}-w_{n_i-1}\|=0.
\end{eqnarray}
Using \eqref{Z12} and \eqref{Z12*}, we obtain
\begin{eqnarray}\label{Z22*}
\lim_{i\to\infty} \|w_{n_i+1}-b_{n_i}\|=0.
\end{eqnarray}
As in the proof of Theorem \ref{THM1}, we can choose a subsequence $\{w_{n_{i_j}}\}$ of $\{w_{n_i}\}$ which converges weakly to some $w^*\in \mathcal{H}$ such that
\begin{eqnarray*}
\limsup_{i\to\infty}\langle \hat{v}-\widehat{w}, w_{n_i}-\widehat{w}\rangle=\lim_{j\to\infty}\langle \hat{v}-\widehat{w}, w_{{n_i}_j}-\widehat{w}\rangle=\langle \hat{v}-\widehat{w}, w^*-\widehat{w}\rangle,
\end{eqnarray*}
and we can show that $w^*\in (S+T)^{-1}(0)$. Since $\widehat{w}=P_{(S+T)^{-1}(0)}\hat{v}$, we have
\begin{eqnarray*}
\limsup_{i\to\infty}\langle \hat{v}-\widehat{w}, w_{n_i}-\widehat{w}\rangle=\langle \hat{v}-\widehat{w}, w^*-\widehat{w}\rangle\leq 0,
\end{eqnarray*}
which implies by \eqref{Z12*} that
\begin{eqnarray}\label{Z6}
\limsup_{i\to\infty}\langle \hat{v}-\widehat{w}, b_{n_i}-\widehat{w}\rangle\leq 0.
\end{eqnarray}
Using \eqref{Z22*}, \eqref{Z6} and the condition $\lim\limits_{i\to\infty}\sigma_{n_{i}}=0$, we get that $\limsup\limits_{i\to\infty}\hat{h}_{n_{i}}\leq0$. Thus, applying Lemma \ref{lem6} to \eqref{STR}, we obtain $\lim\limits_{n\to\infty}\hat{t}_n=0$. Hence, using \eqref{Pos2*}, we conclude that $\{w_n\}$ converges strongly to $\widehat{w}=P_{(S+T)^{-1}(0)}\hat{v}$,
as asserted.
\end{proof}

\hfill

\begin{remark}
Instead of using a constant inertial parameter $\bar{\vartheta}$ in Algorithm \ref{AL2}, we can use a variable inertial parameter $\vartheta_n$, where $0\leq \vartheta_n\leq \vartheta_{n+1}  \leq \bar{\vartheta}$, without imposing an additional assumption on the anchoring coefficient $\sigma_n$ as done in most related works \cite{Timi, AAMPrasit, YNUMA, YOMS}, where additional requirements are imposed on $\sigma_n$.

\noindent At this point we note that although the sequence $\{\vartheta_n\}$ is required to be increasing, unlike in most related papers \cite{Timi, FBFS4, AAMPrasit, Sahu, YOMS}, it does not depend on the iterates $\{w_n\}$ and $\{w_{n-1}\}$.

\hfill

\noindent We now present our algorithm with variable inertial parameter and the corresponding strong convergence theorem.
\end{remark}
\hrule \hrule
\begin{alg}\label{AL3} Let $\delta_0,\delta_1>0$, $\bar{r}\in \Big(\bar{\beta},~\frac{1-2\bar{\beta}}{2}\Big)$ with $\bar{\beta}\in (0, \frac{1}{4})$, and choose the sequences $\{\sigma_n\}$ in $(0, 1)$ and $\{c_n\}$ in $[0, \infty)$  such that $\sum_{n=1}^{\infty} c_n < \infty$. Given arbitrary $\hat{v}, w_0,w_1 \in \mathcal{H}$, let the sequence $\{w_n\}$ be generated by
	\hrule \hrule
	 \begin{eqnarray*}
w_{n+1}=J^{S}_{\delta_n}\left(\sigma_n \hat{v}+(1-\sigma_n)(w_n+\vartheta_n(w_n-w_{n-1}))-\delta_nTw_n-\delta_{n-1}(1-\sigma_n)(Tw_{n}-Tw_{n-1})\right), ~\forall n\geq 1,
\end{eqnarray*}
where the step size $\delta_n$ is defined by \eqref{a11}.
\hrule\hrule
\end{alg}

\hfill

\begin{theorem}\label{THM3} Let the sequence $\{w_n\}$ be generated by Algorithm \ref{AL3} when Assumption \ref{AS1} holds. If $0\leq  \vartheta_n\leq \vartheta_{n+1} \leq \bar{\vartheta}<\min\left\lbrace\frac{\bar{\beta}}{2},~\frac{\frac{1}{2}-\bar{r}}{2}\right\rbrace$, $\lim\limits_{n\to\infty}\sigma_n=0$ and $\sum\limits_{n=1}^\infty\sigma_n=\infty$, then $\{w_n\}$ converges strongly to $P_{(S+T)^{-1}(0)}\hat{v}$.
\end{theorem}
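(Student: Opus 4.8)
The plan is to adapt the proof of Theorem~\ref{THM2} to accommodate the variable inertial parameter $\vartheta_n$ in place of the constant $\bar{\vartheta}$. The key structural observation is that the sequences $\{\vartheta_n\}$ satisfy $0\leq \vartheta_n\leq \vartheta_{n+1}\leq \bar{\vartheta}$, so the monotonicity $\vartheta_n\leq\vartheta_{n+1}$ will let me control the telescoping terms exactly as the constant case did, while the uniform bound $\vartheta_n\leq\bar{\vartheta}$ preserves all the sign conditions that made the coefficients of $\|w_{n+1}-w_n\|^2$ and $\|w_n-w_{n-1}\|^2$ eventually positive. First I would set $b_n:=w_n+\vartheta_n(w_n-w_{n-1})$ and $a_n:=\sigma_n\hat{v}+(1-\sigma_n)b_n$, and rerun the computation of Lemma~\ref{lem2} through the analogue of \eqref{LL**2}. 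The identities \eqref{NL1} and \eqref{NL2} must be rewritten with $\vartheta_n$ replacing $\bar{\vartheta}$; this introduces a term $-\vartheta_n\|w_{n-1}-\widehat{w}\|^2$ on the left and $-(1-\sigma_n)\vartheta_{n-1}\|w_{n-1}-\widehat{w}\|^2$ (or rather the term coming from the previous index) on the right, and here the inequality $\vartheta_n\geq\vartheta_{n-1}$ is precisely what is needed so that the Lyapunov functional telescopes in the correct direction.

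Concretely, I would define $\hat{t}_n:=\|w_n-\widehat{w}\|^2-\vartheta_n\|w_{n-1}-\widehat{w}\|^2+2\delta_{n-1}\langle Tw_n-Tw_{n-1},\widehat{w}-w_n\rangle+\tfrac{1}{2}\|w_n-w_{n-1}\|^2$ and establish, as in \eqref{ABC2}, a recursion of the form $\hat{t}_{n+1}\leq(1-\sigma_n)\hat{t}_n+\sigma_n\hat{h}_n$ valid for all large $n$, with the coefficients of the difference terms strictly positive. The crucial point is that when I pass from $(1-\sigma_n)\bigl(\|w_n-\widehat{w}\|^2-\vartheta_n\|w_{n-1}-\widehat{w}\|^2+\cdots\bigr)$ back to $\hat{t}_n$, I need $(1-\sigma_n)\vartheta_n\leq\vartheta_n$, which holds trivially, and more importantly I must match the $-\vartheta_{n+1}\|w_n-\widehat{w}\|^2$ appearing in $\hat{t}_{n+1}$ against the $+\|w_n-\widehat{w}\|^2$ produced on the right; the monotonicity $\vartheta_{n+1}\geq\vartheta_n$ guarantees that the residual coefficient on $\|w_n-\widehat{w}\|^2$ can be absorbed. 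The positivity of $\hat{t}_n$ itself follows from the analogue of \eqref{Pos2*}, using $\bar{\vartheta}<\tfrac{1}{2}-\bar{r}$ together with Lemma~\ref{NL3} to bound $\vartheta_n\|w_{n-1}-\widehat{w}\|^2\leq\bar{\vartheta}\bigl(2\|w_n-w_{n-1}\|^2+2\|w_n-\widehat{w}\|^2\bigr)$.

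Once the recursion \eqref{STR} and the boundedness of $\{w_n\}$ are in hand, the remainder of the argument is verbatim that of Theorem~\ref{THM2}: I would invoke Lemma~\ref{lem6}, extract a subsequence $\{\hat{t}_{n_i}\}$ with $\liminf_i(\hat{t}_{n_i+1}-\hat{t}_{n_i})\geq 0$, deduce $\lim_i\|w_{n_i+1}-w_{n_i}\|=0$ from the strict positivity of the coefficient $(1-\sigma_{n_i})^2(1-\bar{\vartheta})-\tfrac{\delta_{n_i-1}}{\delta_{n_i}}\bar{r}(1-\sigma_{n_i})-\tfrac{1}{2}$ whose limit is $\tfrac{1}{2}-\bar{\vartheta}-\bar{r}>0$, and then obtain $\|b_{n_i}-w_{n_i}\|=\vartheta_{n_i}\|w_{n_i}-w_{n_i-1}\|\to 0$ exactly as in \eqref{Z12*} (here $\vartheta_{n_i}\leq\bar{\vartheta}$ suffices). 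The weak limit point argument using maximal monotonicity of $S+T$ (Lemma~\ref{le5}) and the projection characterization gives $\limsup_i\langle\hat{v}-\widehat{w},b_{n_i}-\widehat{w}\rangle\leq 0$, whence $\limsup_i\hat{h}_{n_i}\leq 0$ and Lemma~\ref{lem6} yields $\hat{t}_n\to 0$, so $w_n\to\widehat{w}=P_{(S+T)^{-1}(0)}\hat{v}$. The main obstacle I anticipate is the bookkeeping in the telescoping step: because $\vartheta_n$ now varies, I must verify that replacing $\bar{\vartheta}$ by $\vartheta_n$ in the definition of the Lyapunov functional does not break the one-step inequality, and this is exactly where the increasing-sequence hypothesis $\vartheta_n\leq\vartheta_{n+1}$ is indispensable rather than cosmetic.
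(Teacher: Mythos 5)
Your proposal is correct and follows essentially the same route as the paper: you rerun the Lemma \ref{lem2} computation with $\vartheta_n$ in place of $\bar{\vartheta}$, define the same Lyapunov functional $\hat{t}_n=\|w_n-\widehat{w}\|^2-\vartheta_n\|w_{n-1}-\widehat{w}\|^2+2\delta_{n-1}\langle Tw_n-Tw_{n-1},\widehat{w}-w_n\rangle+\tfrac{1}{2}\|w_n-w_{n-1}\|^2$, and use the monotonicity $\vartheta_n\leq\vartheta_{n+1}$ at exactly the same spot the paper does (to dominate the left-hand side by $\hat{t}_{n+1}$), with positivity via the same $\tfrac{1}{2}-\tfrac{\delta_{n-1}}{\delta_n}\bar{r}-2\vartheta_n$ bound and the tail argument inherited verbatim from Theorem \ref{THM2}. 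Your closing remark correctly identifies the increasing-sequence hypothesis as the essential (not cosmetic) ingredient, which is precisely how the paper's proof uses it.
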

\begin{proof}
Using $\vartheta_n$ instead of $\bar{\vartheta}$ in the proof of Lemma \ref{lem2} up to \eqref{LL**2*}, we have
\begin{eqnarray*}
	&&\|w_{n+1}-\widehat{w}\|^2-\vartheta_n\|w_n-\widehat{w}\|^2 +2\delta_{n}\langle Tw_{n+1}-Tw_n,\widehat{w}-w_{n+1}\rangle +\frac{1}{2}\|w_{n+1}-w_n\|^2 \nonumber\\
	&&\leq  (1-\sigma_n)\left[\|w_n-\widehat{w}\|^2-\vartheta_n\|w_{n-1}-\widehat{w}\|^2 +2\delta_{n-1}\langle Tw_n-Tw_{n-1},\widehat{w}-w_{n}\rangle +\frac{1}{2}\|w_n-w_{n-1}\|^2\right] \nonumber\\
&&+\sigma_n\|\hat{v}-\widehat{w}\|^2-(1-\sigma_n)(\bar{\beta}-2\vartheta_n)\|w_n-w_{n-1}\|^2\nonumber\\
&&-\left[(1-\sigma_n)(1-\vartheta_n)-\frac{\delta_{n-1}}{\delta_n}{\bar{r}}(1-\sigma_n)-\frac{1}{2}\right]\|w_{n+1}-w_n\|^2~\forall n\geq n_0.
\end{eqnarray*}

\noindent Since $\vartheta_n\leq \vartheta_{n+1}$, we obtain
\begin{eqnarray*}
	&&\|w_{n+1}-\widehat{w}\|^2-\vartheta_{n+1}\|w_n-\widehat{w}\|^2 +2\delta_{n}\langle Tw_{n+1}-Tw_n,\widehat{w}-w_{n+1}\rangle +\frac{1}{2}\|w_{n+1}-w_n\|^2 \nonumber\\
	&&\leq  (1-\sigma_n)\left[\|w_n-\widehat{w}\|^2-\vartheta_n\|w_{n-1}-\widehat{w}\|^2 +2\delta_{n-1}\langle Tw_n-Tw_{n-1},\widehat{w}-w_{n}\rangle +\frac{1}{2}\|w_n-w_{n-1}\|^2\right] \nonumber\\
&&+\sigma_n\|\hat{v}-\widehat{w}\|^2-(1-\sigma_n)(\bar{\beta}-2\vartheta_n)\|w_n-w_{n-1}\|^2\nonumber\\
&&-\left[(1-\sigma_n)(1-\vartheta_n)-\frac{\delta_{n-1}}{\delta_n}{\bar{r}}(1-\sigma_n)-\frac{1}{2}\right]\|w_{n+1}-w_n\|^2~\forall n\geq n_0.
\end{eqnarray*}
Thus, by setting $\hat{t}_n:=\|w_n-\widehat{w}\|^2-\vartheta_n\|w_{n-1}-\widehat{w}\|^2 +2\delta_{n-1}\langle Tw_n-Tw_{n-1},\widehat{w}-w_{n}\rangle +\frac{1}{2}\|w_n-w_{n-1}\|^2$, and following the arguments used in obtaining \eqref{Pos2*}, we obtain
\begin{eqnarray}\label{Pos2*VS}
\hat{t}_n&\geq  & \left(\frac{1}{2}-\frac{\delta_{n-1}}{\delta_{n}} \bar{r}-2\vartheta_n\right)\left[\|w_n-\widehat{w}\|^2+\|w_n-w_{n-1}\|^2\right].
\end{eqnarray}
Since $\vartheta_n\leq \bar{\vartheta}<\frac{\frac{1}{2}-\bar{r}}{2}$, we get that
$\lim\limits_{n\to\infty}\left(\frac{1}{2}-\frac{\delta_{n-1}}{\delta_{n}} \bar{r}-2\vartheta_n\right)\geq\lim\limits_{n\to\infty}\left(\frac{1}{2}-\frac{\delta_{n-1}}{\delta_{n}} \bar{r}-2\bar{\vartheta}\right)=\frac{1}{2}-\bar{r}-2\bar{\vartheta}>0$. Thus, there exists $n_1\in \mathbb{N}$, $n_1\geq n_0$ such that $\frac{1}{2}-\frac{\delta_{n-1}}{\delta_{n}} \bar{r}-2\vartheta_n>0~\forall n\geq n_1.$ Hence, we get from \eqref{Pos2*VS} that $\hat{t}_n\geq 0$ for all $n\geq n_1\geq n_0$, and by following the arguments from \eqref{Pos2*} up to the end of the proof of Lemma \ref{lem2},  we will see that $\{w_n\}$ is bounded.\\
 Now, using arguments similar to those used in the proof of Theorem \ref{THM2}, and  with the condition $\vartheta_n\leq \vartheta_{n+1} \leq \bar{\vartheta}<\min\left\lbrace\frac{\bar{\beta}}{2},~\frac{\frac{1}{2}-\bar{r}}{2}\right\rbrace$ in mind, we will get that $\{w_n\}$ converges strongly to $P_{(S+T)^{-1}(0)}\hat{v}$, as asserted.
\end{proof}

\section{Viscosity-type Forward-Reflected-Backward Splitting Method}\label{VIS}
\noindent Let $U$ be a contraction with constant $\bar{\kappa}\in [0, \frac{1}{2})$. We propose the following viscosity variant of Algorithm \ref{AL1}.

\hrule \hrule
\begin{alg}\label{AL1V} Let $\delta_0,\delta_1>0$, $\bar{r}\in \Big(\bar{\beta},~\frac{1-2\bar{\beta}}{2}\Big)$ with $\bar{\beta}\in (0, \frac{1}{4})$, and choose the sequences $\{\sigma_n\}$ in $(0, 1)$ and $\{c_n\}$ in $[0, \infty)$ such that $\sum_{n=1}^{\infty} c_n< \infty$. For arbitrary $w_0,w_1 \in \mathcal{H}$, let the sequence $\{w_n\}$ be generated by
	\hrule \hrule
	 \begin{eqnarray*}
w_{n+1}=J^{S}_{\delta_n}\big(\sigma_n Uw_n+(1-\sigma_n) w_n-\delta_nTw_n-\delta_{n-1}(1-\sigma_n(1-2\bar{\kappa}))(Tw_{n}-Tw_{n-1})\big),~ n\geq 1,
\end{eqnarray*}
where the step size $\delta_n$ is defined by \eqref{a11}.
\hrule\hrule
\end{alg}

\hfill

\begin{lemma}\label{lem2V} Let $\{w_n\}$ be generated by Algorithm \ref{AL1V} when Assumption \ref{AS1} holds. If $\lim\limits_{n\to\infty}\sigma_n=0$, then the sequence $\{w_n\}$ is bounded.
\end{lemma}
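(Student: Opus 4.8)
The plan is to follow the proof of Lemma \ref{lem1} almost verbatim, replacing the fixed anchor $\hat v$ by the viscosity point $Uw_n$ and replacing the reflected coefficient $(1-\sigma_n)$ by $(1-\sigma_n(1-2\bar\kappa))$ throughout. Fix $\widehat{w}\in(S+T)^{-1}(0)$, so that $-\delta_n T\widehat{w}\in\delta_n S\widehat{w}$, and set $a_n:=\sigma_n Uw_n+(1-\sigma_n)w_n$. From the defining resolvent step of Algorithm \ref{AL1V} one obtains the inclusion $a_n-\delta_n Tw_n-\delta_{n-1}(1-\sigma_n(1-2\bar\kappa))(Tw_n-Tw_{n-1})-w_{n+1}\in\delta_n Sw_{n+1}$, the exact analogue of \eqref{**}. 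Applying monotonicity of $S$ against $\widehat{w}$, expanding with Lemma \ref{NL}(i), and then using monotonicity of $T$ (as in \eqref{M2}) together with the step-size rule \eqref{a11} to control the reflected inner product (as in \eqref{a3A}), I would reproduce the inequality \eqref{LL*}, now carrying the coefficient $(1-\sigma_n(1-2\bar\kappa))$ in front of every reflected term.

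Next I would expand $\|a_n-\widehat{w}\|^2$. Here Lemma \ref{NL}(ii) gives the clean identity $\|a_n-\widehat{w}\|^2=\sigma_n\|Uw_n-\widehat{w}\|^2+(1-\sigma_n)\|w_n-\widehat{w}\|^2-\sigma_n(1-\sigma_n)\|Uw_n-w_n\|^2$, and the contraction property is invoked through $\|Uw_n-\widehat{w}\|\le\bar\kappa\|w_n-\widehat{w}\|+\|U\widehat{w}-\widehat{w}\|$. A Young-type estimate then yields $\|Uw_n-\widehat{w}\|^2\le 2\bar\kappa\|w_n-\widehat{w}\|^2+C$ for a constant $C$ depending only on $\bar\kappa$ and $\|U\widehat{w}-\widehat{w}\|$, so that $\sigma_n\|Uw_n-\widehat{w}\|^2$ combines with $(1-\sigma_n)\|w_n-\widehat{w}\|^2$ to produce the coefficient $\big(1-(1-2\bar\kappa)\sigma_n\big)\|w_n-\widehat{w}\|^2$ plus a term of size $C\sigma_n$. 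Setting the same Lyapunov quantity as in \eqref{Pos}, namely $t_n:=\|w_n-\widehat{w}\|^2+2\delta_{n-1}\langle Tw_n-Tw_{n-1},\widehat{w}-w_n\rangle+\tfrac12\|w_n-w_{n-1}\|^2$, the nonnegativity $t_n\ge(\tfrac12+\bar\beta)\|w_n-\widehat{w}\|^2+\bar\beta\|w_n-w_{n-1}\|^2\ge0$ for $n\ge n_0$ is inherited verbatim from \eqref{Pos}, since that estimate does not involve the anchor. Collecting everything, I expect to reach a recursion of the form $t_{n+1}\le\big(1-(1-2\bar\kappa)\sigma_n\big)t_n+C\sigma_n$ for all $n$ beyond some index.

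The delicate point, and the reason the coefficient $(1-2\bar\kappa)$ appears in the algorithm, is the bookkeeping that makes this recursion close. The reflected coefficient $(1-\sigma_n(1-2\bar\kappa))$ is tuned so that, after regrouping, the reflected inner-product term factors together with the $\|w_n-\widehat{w}\|^2$ and $\|w_n-w_{n-1}\|^2$ contributions into exactly $\big(1-(1-2\bar\kappa)\sigma_n\big)t_n$, while the squared-difference terms $\|w_{n+1}-w_n\|^2$ and $\|w_n-w_{n-1}\|^2$ must be shown to carry nonpositive net coefficients for large $n$ (using $\lim_{n\to\infty}\tfrac{\delta_{n-1}}{\delta_n}\bar r=\bar r<\tfrac12-\bar\beta$ from Remark \ref{VSS} and $\sigma_n\to0$). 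The constraint $\bar\kappa\in[0,\tfrac12)$ is precisely what guarantees $(1-2\bar\kappa)\sigma_n\in(0,1)$, so that the recursion is of the type required by Lemma \ref{l5}: writing it as $t_{n+1}\le(1-\sigma_n')t_n+s_n$ with $\sigma_n'=(1-2\bar\kappa)\sigma_n$ and $s_n=C\sigma_n\le\sigma_n' M$, Lemma \ref{l5} gives boundedness of $\{t_n\}$, and then the lower bound from \eqref{Pos} forces $\{w_n\}$ to be bounded. I expect the main obstacle to be exactly this sign bookkeeping — confirming that the net coefficients of the two difference terms are nonpositive and that the $Uw_n$ term contributes precisely the $2\bar\kappa$ needed — rather than any conceptually new ingredient beyond those already used in Lemma \ref{lem1}.
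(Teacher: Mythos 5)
Your proposal is correct and follows essentially the same route as the paper's proof: the same Lyapunov quantity $t_n$ with nonnegativity inherited from \eqref{Pos}, the same expansion of $\|a_n-\widehat{w}\|^2$ (your use of Lemma \ref{NL}(ii) is just a cosmetic variant of the paper's \eqref{H1V}--\eqref{H**V}), the same contraction estimate $\|Uw_n-\widehat{w}\|^2\le 2\bar\kappa^2\|w_n-\widehat{w}\|^2+2\|U\widehat{w}-\widehat{w}\|^2$ with $2\bar\kappa^2\le 2\bar\kappa$ producing the coefficient $\bigl(1-\sigma_n(1-2\bar\kappa)\bigr)$, and the same closure via Lemma \ref{l5} after checking that $\tfrac12-\sigma_n-\tfrac{\delta_{n-1}}{\delta_n}\bar r\bigl(1-\sigma_n(1-2\bar\kappa)\bigr)>\bar\beta$ for large $n$. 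The sign bookkeeping you flag as the only delicate point is exactly the verification the paper carries out, so no gap remains.
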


\begin{proof}
Let $\widehat{w}\in (S+T)^{-1}(0)$ and set $a_n:=\sigma_n Uw_n+(1-\sigma_n) w_n$. Then by similar arguments as in the proof of Lemma \ref{lem1} up to \eqref{LL*}, we get
\begin{eqnarray}\label{LL*V}
	&&\|w_{n+1}-\widehat{w}\|^2 +2\delta_{n}\langle Tw_{n+1}-Tw_n,\widehat{w}-w_{n+1}\rangle +\frac{1}{2}\|w_{n+1}-w_n\|^2 \nonumber\\
	&&\leq  \|a_n-\widehat{w}\|^2-\|w_{n+1}-a_n\|^2 +2\delta_{n-1}(1-\sigma_n(1-2\bar{\kappa}))\langle Tw_n-Tw_{n-1},\widehat{w}-w_{n}\rangle  \nonumber\\
&&+(1-\sigma_n(1-2\bar{\kappa}))\big(\frac{1}{2}-\bar{\beta}\big)\|w_n-w_{n-1}\|^2+\left[\frac{\delta_{n-1}}{\delta_n}{\bar{r}}(1-\sigma_n(1-2\bar{\kappa}))+\frac{1}{2}\right]\|w_{n+1}-w_n\|^2~\forall n\geq n_0.	
\end{eqnarray}	
Replacing $\hat{v}$ by $Uw_n$ in \eqref{H1} and \eqref{H2}, we get
\begin{eqnarray}\label{H1V}
\|a_n-\widehat{w}\|^2&=&\|w_n-\widehat{w}\|^2+\sigma_n^2\|w_n-Uw_n\|^2-\sigma_n\|w_n-Uw_n\|^2-\sigma_n\|w_n-\widehat{w}\|^2+\sigma_n\|Uw_n-\widehat{w}\|^2
\end{eqnarray}
and
\begin{eqnarray}\label{H2V}
\|a_n-w_{n+1}\|^2&=&\|w_n-w_{n+1}\|^2+\sigma_n^2\|w_n-Uw_n\|^2-\sigma_n\|w_n-Uw_n\|^2-\sigma_n\|w_n-w_{n+1}\|^2\nonumber \\
&&+\sigma_n\|Uw_n-w_{n+1}\|^2,
\end{eqnarray}
respectively.

\noindent Now, subtracting \eqref{H2V} from \eqref{H2V}, we obtain
\begin{eqnarray}\label{H**V}
&&\|a_n-\widehat{w}\|^2-\|a_n-w_{n+1}\|^2\nonumber\\
&=&(1-\sigma_n)\|w_n-\widehat{w}\|^2+\sigma_n\|Uw_n-\widehat{w}\|^2-(1-\sigma_n)\|w_{n+1}-w_n\|^2-\sigma_n\|w_{n+1}-Uw_n\|^2\nonumber\\
&\leq & (1-\sigma_n)\|w_n-\widehat{w}\|^2+2\sigma_n\|Uw_n-U\widehat{w}\|^2+2\sigma_n\|U\widehat{w}-\widehat{w}\|^2-(1-\sigma_n)\|w_{n+1}-w_n\|^2\nonumber\\
&\leq & (1-\sigma_n)\|w_n-\widehat{w}\|^2+2\sigma_n\bar{\kappa}^2\|w_n-\widehat{w}\|^2+2\sigma_n\|U\widehat{w}-\widehat{w}\|^2-(1-\sigma_n)\|w_{n+1}-w_n\|^2\nonumber\\
&\leq & (1-\sigma_n(1-2\bar{\kappa}))\|w_n-\widehat{w}\|^2+\sigma_n(1-2\bar{\kappa})\frac{2\|U\widehat{w}-\widehat{w}\|^2}{1-2\bar{\kappa}}-(1-\sigma_n)\|w_{n+1}-w_n\|^2.
\end{eqnarray}

\noindent Using \eqref{H**V} in \eqref{LL*V}, we get
\begin{eqnarray}\label{LL**V}
	&&\|w_{n+1}-\widehat{w}\|^2 +2\delta_{n}\langle Tw_{n+1}-Tw_n,\widehat{w}-w_{n+1}\rangle +\frac{1}{2}\|w_{n+1}-w_n\|^2 \nonumber\\
	&&\leq  (1-\sigma_n(1-2\bar{\kappa}))\|w_n-\widehat{w}\|^2+\sigma_n(1-2\bar{\kappa})\frac{2\|U\widehat{w}-\widehat{w}\|^2}{1-2\bar{\kappa}}-(1-\sigma_n)\|w_{n+1}-w_n\|^2 \nonumber\\
	&& +2\delta_{n-1}(1-\sigma_n(1-2\bar{\kappa}))\langle Tw_n-Tw_{n-1},\widehat{w}-w_{n}\rangle +(1-\sigma_n(1-2\bar{\kappa}))\big(\frac{1}{2}-\bar{\beta}\big)\|w_n-w_{n-1}\|^2 \nonumber\\
&&+\left[\frac{\delta_{n-1}}{\delta_n}{\bar{r}}(1-\sigma_n(1-2\bar{\kappa}))+\frac{1}{2}\right]\|w_{n+1}-w_n\|^2\nonumber\\
&&=  (1-\sigma_n(1-2\bar{\kappa}))\left[\|w_n-\widehat{w}\|^2 +2\delta_{n-1}\langle Tw_n-Tw_{n-1},\widehat{w}-w_{n}\rangle +\frac{1}{2}\|w_n-w_{n-1}\|^2\right] \nonumber\\
&&+\sigma_n(1-2\bar{\kappa})\frac{2\|U\widehat{w}-\widehat{w}\|^2}{1-2\bar{\kappa}} -\bar{\beta}(1-\sigma_n(1-2\bar{\kappa}))\|w_{n}-w_{n-1}\|^2-(1-\sigma_n)\|w_{n+1}-w_{n}\|^2\nonumber\\
&&+\left[\frac{\delta_{n-1}}{\delta_n}\bar{r}(1-\sigma_n(1-2\bar{\kappa}))+\frac{1}{2}\right]\|w_{n+1}-w_n\|^2 \nonumber\\
&&=  (1-\sigma_n(1-2\bar{\kappa}))\left[\|w_n-\widehat{w}\|^2 +2\delta_{n-1}\langle Tw_n-Tw_{n-1},\widehat{w}-w_{n}\rangle +\frac{1}{2}\|w_n-w_{n-1}\|^2\right] \nonumber\\
&&+\sigma_n(1-2\bar{\kappa})\frac{2\|U\widehat{w}-\widehat{w}\|^2}{1-2\bar{\kappa}}-\left[\frac{1}{2}-\sigma_n-\frac{\delta_{n-1}}{\delta_n}\bar{r}(1-\sigma_n(1-2\bar{\kappa}))\right]\|w_{n+1}-w_n\|^2\nonumber\\
&& -\bar{\beta}(1-\sigma_n(1-2\bar{\kappa}))\|w_{n}-w_{n-1}\|^2~\forall n\geq n_0.	
\end{eqnarray}	
\noindent Let $t_n:=\|w_n-\widehat{w}\|^2 +2\delta_{n-1}\langle Tw_n-Tw_{n-1},\widehat{w}-w_{n}\rangle +\frac{1}{2}\|w_n-w_{n-1}\|^2$. Then by \eqref{Pos}, $t_n\geq 0$ for all $n\geq n_0$. We also have that
$$\lim\limits_{n\to\infty}\left[\frac{1}{2}-\sigma_n-\frac{\delta_{n-1}}{\delta_n}{\bar{r}}(1-\sigma_n(1-2\bar{\kappa}))\right]=\frac{1}{2}-\bar{r}>\bar{\beta}.$$
Thus, there exists $n_1\in \mathbb{N}$, $n_1\geq n_0$, such that $\frac{1}{2}-\sigma_n-\frac{\delta_{n-1}}{\delta_n}{\bar{r}}(1-\sigma_n(1-2\bar{\kappa}))>\bar{\beta}~\forall n\geq n_1.$ Hence,
\begin{eqnarray*}
t_{n+1}&\leq & (1-\sigma_n(1-2\bar{\kappa}))t_n +\sigma_n(1-2\bar{\kappa})\frac{2\|U\widehat{w}-\widehat{w}\|^2}{1-2\bar{\kappa}}-\bar{\beta}\|w_{n+1}-w_n\|^2\nonumber\\
&&-\bar{\beta}(1-\sigma_n(1-2\bar{\kappa}))\|w_n-w_{n-1}\|^2 ~\forall n\geq n_1\geq n_0.
\end{eqnarray*}
Therefore, $\{t_n\}$ is bounded which further gives that $\{w_n\}$ is bounded too, as asserted.
\end{proof}

\hfill

\begin{theorem}\label{THM1V} Let $\{w_n\}$ be generated by Algorithm \ref{AL1V} when Assumption \ref{AS1} holds. If $U$ is a contraction with $\bar{\kappa}\in [0, \frac{1}{2})$, $\lim\limits_{n\to\infty}\sigma_n=0$ and $\sum\limits_{n=1}^\infty\sigma_n=\infty$, then $\{w_n\}$ converges strongly to $P_{(S+T)^{-1}(0)}U\widehat{w}$.
\end{theorem}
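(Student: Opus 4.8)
The plan is to first pin down the limit point $\widehat{w}$ and then mirror the proof of Theorem \ref{THM1}, the one essential modification being the treatment of the iterate-dependent term $Uw_n$ in place of the fixed anchor $\hat{v}$.

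First I would check that the point $\widehat{w}=P_{(S+T)^{-1}(0)}U\widehat{w}$ in the statement is well defined. Since $(S+T)^{-1}(0)$ is nonempty by Assumption \ref{AS1}(c), and is closed and convex as the zero set of the maximal monotone operator $S+T$ (Lemma \ref{le5}), the metric projection $P:=P_{(S+T)^{-1}(0)}$ is nonexpansive, so $P\circ U$ is a contraction with constant $\bar{\kappa}\in[0,\tfrac12)$. By the Banach fixed point theorem it has a unique fixed point $\widehat{w}$, which is precisely the asserted limit.

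Next I would reuse the computation of Lemma \ref{lem2V} up to \eqref{LL*V}, but now retaining the cross term instead of discarding it as in the boundedness proof. Writing $a_n=\sigma_nUw_n+(1-\sigma_n)w_n$ and expanding $\|a_n-\widehat{w}\|^2$ by Lemma \ref{NL}(i) exactly as in \eqref{T1}, the only genuinely new ingredient is the inner product $\langle Uw_n-\widehat{w},w_n-\widehat{w}\rangle$, which I would split, using the $\bar{\kappa}$-contractivity of $U$, as
\[
\langle Uw_n-\widehat{w},w_n-\widehat{w}\rangle\le\bar{\kappa}\|w_n-\widehat{w}\|^2+\langle U\widehat{w}-\widehat{w},w_n-\widehat{w}\rangle .
\]
The $\bar{\kappa}$-term combines with the $(1-\sigma_n)^2$ coefficient of $\|w_n-\widehat{w}\|^2$ to give the factor $(1-\sigma_n)\big(1-\sigma_n(1-2\bar{\kappa})\big)\le 1-\sigma_n(1-2\bar{\kappa})$, the discarded surplus being favorable. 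Bounding $\|a_n-w_{n+1}\|^2$ from below as in \eqref{T2} with $M:=\sup_n\|w_{n+1}-Uw_n\|$ (finite since $\{w_n\}$ is bounded by Lemma \ref{lem2V} and $U$ is continuous), and recalling the coercivity estimate \eqref{Pos} for $t_n:=\|w_n-\widehat{w}\|^2+2\delta_{n-1}\langle Tw_n-Tw_{n-1},\widehat{w}-w_n\rangle+\tfrac12\|w_n-w_{n-1}\|^2$, this yields a recursion
\[
t_{n+1}\le\big(1-\tilde{\sigma}_n\big)t_n+\tilde{\sigma}_n h_n,\qquad \tilde{\sigma}_n:=\sigma_n(1-2\bar{\kappa}),
\]
with $h_n=\tfrac{1}{1-2\bar{\kappa}}\big(\sigma_n\|Uw_n-\widehat{w}\|^2+2(1-\sigma_n)\langle U\widehat{w}-\widehat{w},w_n-\widehat{w}\rangle+2(1-\sigma_n)M\|w_{n+1}-w_n\|\big)$. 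Because $\bar{\kappa}<\tfrac12$ we have $\tilde{\sigma}_n\to0$ and $\sum_n\tilde{\sigma}_n=\infty$, so Lemma \ref{lem6} is applicable with $\tilde{\sigma}_n$.

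Finally I would verify the subsequential hypothesis of Lemma \ref{lem6}. For any subsequence with $\liminf_i(t_{n_i+1}-t_{n_i})\ge0$, the negative coefficient of $\|w_{n_i+1}-w_{n_i}\|^2$ in the estimate (whose limit is $\tfrac12-\bar{r}>0$) forces $\|w_{n_i+1}-w_{n_i}\|\to0$, and hence $\|a_{n_i}-w_{n_i+1}\|\to0$ and $\|Tw_{n_i+1}-Tw_{n_i}\|\to0$, exactly as in \eqref{Z1}--\eqref{Z4}. Passing to a weakly convergent subsequence $w_{n_{i_j}}\rightharpoonup w^*$ and repeating the maximal-monotonicity argument around \eqref{a13} (valid by Lemma \ref{le5}) gives $w^*\in(S+T)^{-1}(0)$; since $\widehat{w}=PU\widehat{w}$, the characterization of the metric projection yields $\langle U\widehat{w}-\widehat{w},w^*-\widehat{w}\rangle\le0$, whence $\limsup_i\langle U\widehat{w}-\widehat{w},w_{n_i}-\widehat{w}\rangle\le0$. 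Together with $\sigma_{n_i}\to0$ and $\|w_{n_i+1}-w_{n_i}\|\to0$ this gives $\limsup_i h_{n_i}\le0$, so Lemma \ref{lem6} forces $t_n\to0$ and \eqref{Pos} delivers $w_n\to\widehat{w}=P_{(S+T)^{-1}(0)}U\widehat{w}$. The main obstacle is the bookkeeping in the middle step: the contractive splitting of $\langle Uw_n-\widehat{w},w_n-\widehat{w}\rangle$ must be tracked so that the effective damping coefficient collapses to exactly $\tilde{\sigma}_n=\sigma_n(1-2\bar{\kappa})$, matching the $(1-\sigma_n(1-2\bar{\kappa}))$ weighting already built into the reflection term of Algorithm \ref{AL1V}; it is precisely the hypothesis $\bar{\kappa}<\tfrac12$ that keeps $\tilde{\sigma}_n$ positive with $\sum_n\tilde{\sigma}_n=\infty$, making the viscosity term behave like a Halpern anchor with a rescaled coefficient.
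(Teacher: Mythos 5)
Your proposal is correct and follows essentially the same route as the paper's proof: the same contraction splitting of $\langle Uw_n-\widehat{w},w_n-\widehat{w}\rangle$ leading to a recursion with effective coefficient $\sigma_n(1-2\bar{\kappa})$, followed by the identical subsequence argument via Lemma \ref{lem6}, the maximal-monotonicity step around \eqref{a13}, and the projection characterization. Your explicit Banach fixed-point justification that $\widehat{w}=P_{(S+T)^{-1}(0)}U\widehat{w}$ is well defined (left implicit in the paper) and your slightly tighter bound $(1-\sigma_n)\bigl(1-\sigma_n(1-2\bar{\kappa})\bigr)\le 1-\sigma_n(1-2\bar{\kappa})$, which removes the extra $\sigma_n\|w_n-\widehat{w}\|^2$ term the paper carries in $\widetilde{h}_n$, are harmless cosmetic improvements, not a different method.
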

\begin{proof}
Let $\widehat{w}=P_{(S+T)^{-1}(0)}U\widehat{w}$. Replacing $\hat{v}$ by $U w_n$ in \eqref{T1}, we obtain
 \begin{eqnarray}\label{T1V}
\|a_n-\widehat{w}\|^2&=&\sigma_n^2\|Uw_n-\widehat{w}\|^2+(1-\sigma_n)^2\|w_n-\widehat{w}\|^2+2\sigma_n(1-\sigma_n)\langle Uw_n-\widehat{w}, w_n-\widehat{w}\rangle\nonumber\\
&=&(1-\sigma_n)^2\|w_n-\widehat{w}\|^2+\sigma_n^2\|Uw_n-\widehat{w}\|^2+2\sigma_n(1-\sigma_n)\langle Uw_n-U\widehat{w}, w_n-\widehat{w}\rangle\nonumber\\
&&+2\sigma_n(1-\sigma_n)\langle U\widehat{w}-\widehat{w}, w_n-\widehat{w}\rangle\nonumber\\
&\leq&(1-\sigma_n)^2\|w_n-\widehat{w}\|^2+\sigma_n^2\|Uw_n-\widehat{w}\|^2+2\sigma_n(1-\sigma_n)\bar{\kappa}\|w_n-\widehat{w}\|^2\nonumber\\
&&+2\sigma_n(1-\sigma_n)\langle U\widehat{w}-\widehat{w}, w_n-\widehat{w}\rangle\nonumber\\
&\leq &(1-\sigma_n(1-2\bar{\kappa}))\|w_n-\widehat{w}\|^2+\sigma_n^2\|w_n-\widehat{w}\|^2+\sigma_n^2\|Uw_n-\widehat{w}\|^2\nonumber\\
&&+2\sigma_n(1-\sigma_n)\langle U\widehat{w}-\widehat{w}, w_n-\widehat{w}\rangle.
\end{eqnarray}
Also, replacing $\hat{v}$ by $Uw_n$ in \eqref{T2}, we obtain
\begin{eqnarray}\label{T2V}
\|a_n-w_{n+1}\|^2&\geq &\sigma_n^2\|w_{n+1}-Uw_n\|^2+(1-\sigma_n)^2\|w_{n+1}-w_n\|^2-2\sigma_n(1-\sigma_n)\widehat{M} \|w_{n+1}-w_n\|,
\end{eqnarray}
where $\widehat{M}:=\sup\limits_{n\geq 1}\|w_{n+1}-Uw_n\|$.

\noindent Now, using \eqref{T1V} and \eqref{T2V} in \eqref{LL*V}, we get
\begin{eqnarray*}
	&&\|w_{n+1}-\widehat{w}\|^2 +2\delta_{n}\langle Tw_{n+1}-Tw_n,\widehat{w}-w_{n+1}\rangle +\frac{1}{2}\|w_{n+1}-w_n\|^2 \nonumber\\
	&&\leq (1-\sigma_n(1-2\bar{\kappa})) \|w_n-\widehat{w}\|^2+\sigma_n^2\|w_n-\widehat{w}\|^2+\sigma_n^2\|Uw_n-\widehat{w}\|^2+2\sigma_n(1-\sigma_n)\langle U\widehat{w}-\widehat{w}, w_n-\widehat{w}\rangle\nonumber\\
	&&-\sigma_n^2\|w_{n+1}-Uw_n\|^2-(1-\sigma_n)^2\|w_{n+1}-w_n\|^2+2\sigma_n(1-\sigma_n)\widehat{M}\|w_{n+1}-w_n\|^2\nonumber\\
	&&+2\delta_{n-1}(1-\sigma_n(1-2\bar{\kappa}))\langle Tw_n-Tw_{n-1},\widehat{w}-w_{n}\rangle +(1-\sigma_n(1-2\bar{\kappa}))\big(\frac{1}{2}-\bar{\beta}\big)\|w_n-w_{n-1}\|^2\nonumber\\
	&&+\left[\frac{\delta_{n-1}}{\delta_n}{\bar{r}}(1-\sigma_n(1-2\bar{\kappa}))+\frac{1}{2}\right]\|w_{n+1}-w_n\|^2	\nonumber\\
	&&\leq (1-\sigma_n(1-2\bar{\kappa})) \left(\|w_n-\widehat{w}\|^2+2\delta_{n-1}\langle Tw_n-Tw_{n-1},\widehat{w}-w_{n}\rangle +\frac{1}{2}\|w_n-w_{n-1}\|^2\right)\nonumber\\
	&&+\sigma_n \left(\sigma_n\|w_n-\widehat{w}\|^2+\sigma_n\|Uw_n-\widehat{w}\|^2+2(1-\sigma_n)\langle U\widehat{w}-\widehat{w}, w_n-\widehat{w}\rangle+2(1-\sigma_n)\widehat{M}\|w_{n+1}-w_n\|^2\right)\nonumber\\
	&& -\bar{\beta}(1-\sigma_n(1-2\bar{\kappa}))\|w_n-w_{n-1}\|^2-\left[\frac{1}{2}+\sigma_n^2-2\sigma_n-\frac{\delta_{n-1}}{\delta_n}{\bar{r}}(1-\sigma_n(1-2\bar{\kappa}))\right]\|w_{n+1}-w_n\|^2~\forall n\geq n_1.	
\end{eqnarray*}	
Therefore, for all $n\geq n_1$, we have
$$t_{n+1}\leq (1-\sigma_n(1-2\bar{\kappa}))t_n+\sigma_n(1-2\bar{\kappa})\widetilde{h}_n,$$
where\\
 $\widetilde{h}_n=(1-2\bar{\kappa})^{-1}\left(\sigma_n\|w_n-\widehat{w}\|^2+\sigma_n\|Uw_n-\widehat{w}\|^2+2(1-\sigma_n)\langle U\widehat{w}-\widehat{w}, w_n-\widehat{w}\rangle+2(1-\sigma_n)\widehat{M}\|w_{n+1}-w_n\|^2\right)$.

 \noindent Thus, by arguments similar to those from \eqref{STRa} to the end of the proof of Theorem \ref{THM1}, we see that $\{w_n\}$ converges strongly to $\widehat{w}=P_{(S+T)^{-1}(0)}U\widehat{w}$, as asserted.
\end{proof}

\hfill

\noindent By incorporating the inertial extrapolation step into Algorithm \ref{AL1V}, we arrive at the following inertial variant of Algorithm \ref{AL1V} or the viscosity variant of Algorithm \ref{AL2}, namely the  {\it inertial viscosity-type forward-reflected-backward splitting method}.

\hrule \hrule
\begin{alg}\label{AL2V} Let $\delta_0,\delta_1>0$, $\bar{\vartheta}\in [0, 1)$, $\bar{r}\in \Big(\bar{\beta},~\frac{1-2\bar{\beta}}{2}\Big)$ with $\bar{\beta}\in (0, \frac{1}{4})$, and choose sequences $\{\sigma_n\}$ in $(0, 1)$ and $\{c_n\}$ in $[0, \infty)$ such that $\sum_{n=1}^{\infty} c_n< \infty$. For arbitrary $w_0,w_1 \in \mathcal{H}$, let the sequence $\{w_n\}$ be generated by
	\hrule \hrule
	 \begin{eqnarray*}
w_{n+1}=J^{S}_{\delta_n}\left(\sigma_n Uw_n +(1-\sigma_n)(w_n+\bar{\vartheta}(w_n-w_{n-1}))-\delta_nTw_n-\delta_{n-1}(1-\sigma_n(1-2\bar{\kappa}))(Tw_{n}-Tw_{n-1})\right),~\forall n\geq 1,
\end{eqnarray*}
where the step size $\delta_n$ is defined by \eqref{a11}.
\hrule\hrule
\end{alg}
\noindent Combining Theorem \ref{THM2} and Theorem \ref{THM1V}, we arrive at the following theorem for Algorithm \ref{AL2V}.
\begin{theorem}\label{THM2V} Let $\{w_n\}$ be generated by Algorithm \ref{AL2V} when Assumption \ref{AS1} holds. If $U$ is a contraction with constant $\bar{\kappa}\in [0, \frac{1}{2})$, $0\leq \bar{\vartheta}<\min\left\lbrace\frac{\bar{\beta}}{2},~\frac{\frac{1}{2}-\bar{r}}{2}\right\rbrace$, $\lim\limits_{n\to\infty}\sigma_n=0$ and $\sum\limits_{n=1}^\infty\sigma_n=\infty$, then $\{w_n\}$ converges strongly to $P_{(S+T)^{-1}(0)}U\widehat{w}$.
\end{theorem}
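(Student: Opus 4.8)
The plan is to follow the hint in the paper and fuse the two completed analyses: the viscosity bookkeeping of Theorem~\ref{THM1V} (which replaces the anchor $\hat v$ by the contraction value $Uw_n$ and, as a consequence, replaces the decay factor $(1-\sigma_n)$ by $(1-\sigma_n(1-2\bar\kappa))$) with the inertial bookkeeping of Theorem~\ref{THM2} (which augments the Lyapunov functional by the term $-\bar\vartheta\|w_{n-1}-\widehat w\|^2$ and exploits the condition $\bar\vartheta<\min\{\bar\beta/2,(\tfrac12-\bar r)/2\}$). First I would observe that, since $\bar\kappa\in[0,\tfrac12)$, the operator $P_{(S+T)^{-1}(0)}U$ is a $\bar\kappa$-contraction, so its fixed point $\widehat w=P_{(S+T)^{-1}(0)}U\widehat w$ exists and is unique by Banach's contraction principle; this is the candidate limit.

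Setting $b_n=w_n+\bar\vartheta(w_n-w_{n-1})$ and $a_n=\sigma_n Uw_n+(1-\sigma_n)b_n$, the argument up to the common inequality \eqref{LL*V} is unchanged (it uses only \eqref{**}, monotonicity of $S$ and $T$, and the step-size rule \eqref{a11}), and it already carries the correct reflection factor $(1-\sigma_n(1-2\bar\kappa))$ dictated by Algorithm~\ref{AL2V}. The next step is to estimate $\|a_n-\widehat w\|^2-\|a_n-w_{n+1}\|^2$: I would expand it as in \eqref{T1V}--\eqref{T2V}, applying the viscosity inequality $\langle Uw_n-U\widehat w,\cdot\rangle\le\bar\kappa\|w_n-\widehat w\|\,\|\cdot\|$ to the $Uw_n$ contribution, and then feeding in the inertial identities \eqref{NL1} and \eqref{NL2} for the $b_n$ contribution exactly as in the passage from \eqref{ABC2a} to \eqref{ABC2}. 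With the inertial Lyapunov functional $\hat t_n:=\|w_n-\widehat w\|^2-\bar\vartheta\|w_{n-1}-\widehat w\|^2+2\delta_{n-1}\langle Tw_n-Tw_{n-1},\widehat w-w_n\rangle+\tfrac12\|w_n-w_{n-1}\|^2$, this should collapse into
\[
\hat t_{n+1}\le\bigl(1-\sigma_n(1-2\bar\kappa)\bigr)\hat t_n+\sigma_n(1-2\bar\kappa)\,\hat h_n,
\]
where $\hat h_n$ collects $\langle U\widehat w-\widehat w,\,b_n-\widehat w\rangle$ together with the $\|w_{n+1}-w_n\|$- and $\sigma_n$-weighted remainders; the nonnegativity $\hat t_n\ge 0$ and boundedness of $\{w_n\}$ follow exactly as in \eqref{Pos2*} and Lemma~\ref{lem2V}, using $\bar\vartheta<(\tfrac12-\bar r)/2$ and Lemma~\ref{l5}.

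To finish, I would invoke Lemma~\ref{lem6} with the divergent null sequence $\sigma_n(1-2\bar\kappa)$ in place of $\sigma_n$ (divergence is preserved since $1-2\bar\kappa>0$). For an arbitrary subsequence $\{\hat t_{n_i}\}$ with $\liminf_i(\hat t_{n_i+1}-\hat t_{n_i})\ge0$, the coefficient of $\|w_{n_i+1}-w_{n_i}\|^2$ tends to $\tfrac12-\bar\vartheta-\bar r>0$, forcing $\|w_{n_i+1}-w_{n_i}\|\to0$ and hence, via \eqref{Z12*}--\eqref{Z22*}, $\|b_{n_i}-w_{n_i}\|\to0$. Passing to a weakly convergent subsequence and using $\delta_{n_{i_j}}(y-Tx)\in\delta_{n_{i_j}}Sx$ together with the maximal monotonicity of $S+T$ (Lemma~\ref{le5}) shows the weak limit $w^*\in(S+T)^{-1}(0)$, whence the projection characterization of $\widehat w=P_{(S+T)^{-1}(0)}U\widehat w$ yields $\limsup_i\langle U\widehat w-\widehat w,w_{n_i}-\widehat w\rangle\le0$ and therefore $\limsup_i\hat h_{n_i}\le0$. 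Lemma~\ref{lem6} then gives $\hat t_n\to0$, and the coercive lower bound \eqref{Pos2*} upgrades this to the asserted strong convergence $w_n\to\widehat w$.

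The main obstacle I anticipate is the genuinely new cross-term produced by running viscosity and inertia simultaneously: expanding $\|a_n-\widehat w\|^2$ now generates $2\sigma_n(1-\sigma_n)\langle Uw_n-\widehat w,\,b_n-\widehat w\rangle$ rather than the simpler $\langle Uw_n-\widehat w,\,w_n-\widehat w\rangle$ of Theorem~\ref{THM1V}. Splitting $b_n-\widehat w=(w_n-\widehat w)+\bar\vartheta(w_n-w_{n-1})$ leaves an extra term $\bar\vartheta\langle Uw_n-\widehat w,\,w_n-w_{n-1}\rangle$ that appears in neither parent theorem; I would absorb it using boundedness of $\{Uw_n-\widehat w\}$ and the fact that it is ultimately weighted by $\sigma_n\bar\vartheta\|w_n-w_{n-1}\|$, so that it feeds only the vanishing part of $\hat h_n$. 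Keeping this remainder inside the $\sigma_n(1-2\bar\kappa)\hat h_n$ term, rather than letting it degrade the sign of the $\|w_{n+1}-w_n\|^2$ and $\|w_n-w_{n-1}\|^2$ coefficients, is the delicate point, and it is precisely where the simultaneous smallness conditions $\bar\kappa<\tfrac12$ and $\bar\vartheta<\min\{\bar\beta/2,(\tfrac12-\bar r)/2\}$ are required.
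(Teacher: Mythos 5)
Your proposal follows exactly the paper's route: the paper gives no separate proof of Theorem~\ref{THM2V}, stating only that it follows by combining Theorem~\ref{THM2} (the inertial bookkeeping with $b_n=w_n+\bar{\vartheta}(w_n-w_{n-1})$ and the Lyapunov functional $\hat{t}_n$) with Theorem~\ref{THM1V} (the viscosity bookkeeping with decay factor $1-\sigma_n(1-2\bar{\kappa})$), and this fusion --- including your correct identification of the new cross term $2\sigma_n(1-\sigma_n)\langle Uw_n-\widehat{w},\,b_n-\widehat{w}\rangle$ and its absorption into the $\sigma_n$-weighted remainder $\hat{h}_n$ --- is precisely the intended argument. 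One small caution: appeals to boundedness of $\{Uw_n-\widehat{w}\}$ are legitimate only in the convergence phase (where the cross term actually arises, via the \eqref{T1V}--\eqref{T2V} expansion); in the boundedness lemma itself the genuine residual is the factor mismatch $2\sigma_n\bar{\kappa}\bar{\vartheta}\|w_{n-1}-\widehat{w}\|^2$ on the $-\bar{\vartheta}\|w_{n-1}-\widehat{w}\|^2$ term of $\hat{t}_n$, which should instead be absorbed via \eqref{NL3} into the coefficient slack on $\|w_n-\widehat{w}\|^2$ and the $(\bar{\beta}-2\bar{\vartheta})$-slack on $\|w_n-w_{n-1}\|^2$, both available under the stated hypotheses.
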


\hfill

\begin{remark} Like in Algorithm \ref{AL3}, we can replace the constant inertial parameter $\bar{\vartheta}$ in Algorithm \ref{AL2V} with an increasing sequence of variable inertial parameters $\{\vartheta_n\}$
without imposing an additional condition on the coefficient $\{\sigma_n\}$.\\
\end{remark}

\begin{remark}
In all the algorithms we proposed in this paper, we obtain strong convergence results for the monotone inclusion problem \eqref{MIP} without assuming that either the maximal monotone operator $S$ or the Lipschitz monotone operator $T$ are strongly monotone (a condition that is quite restrictive). Rather, we modify the forward-reflected-backward splitting method in \cite{MT} appropriately in order to obtain our results.
\end{remark}

\hfill

\section{Numerical Illustrations}\label{Se5}

\noindent In this section we provide computational experiments and compare Algorithm \ref{AL1} and Algorithm \ref{AL2} with the forward-reflected-backward splitting method (FRBSM) \cite[Algorithm (2.2)]{MT} and the reflected-forward-backward splitting method (RFBSM) \cite[Algorithm (1.6)]{MT3}. We use test examples which originate in image restoration and optimal control problems, as well as academic examples.

\hfill

\subsection{Image Restoration Problem}

\begin{example}\label{ex2*}
\begin{eqnarray}\label{IRP}
\min_{w\in \mathcal{R}^l} ~\{||\mathcal{P}w-e||^2_2 +\bar{\delta} ||w||_1\},
\end{eqnarray}
where $\bar{\delta}>0$ (in particular, we take $\bar{\delta}=1$), $w\in \mathcal{R}^l$ is the original image that we intend to recover, $e\in \mathcal{R}^d$ is the observed image and $\mathcal{P}: \mathcal{R}^l\to \mathcal{R}^d$ is the blurring operator. For the numerical computation, we use the $205 \times 232$ Tire Image found in the MATLAB Image Processing Toolbox. Also, we use the Gaussian blur of size $9\times 9$ and standard deviation $\sigma=4$ to create the blurred and noisy image (observed image). In order to measure the quality of the restored image, we use  the signal-to-noise ratio which is defined by
\begin{equation}
	\mbox{SNR} = 20 \times \log_{10}\left(\frac{\|w\|_2}{\|w-w^*\|_2}\right), \nonumber
\end{equation}
where $w^*$ is the restored image.
Note that the larger the SNR, the better the quality of the restored image. \\\ For this example, we choose $w_0 = \textbf{0} \in \mathcal{R}^{l\times l}$ and $w_1 = \textbf{1} \in \mathcal{R}^{l\times d}.$ \\
Furthermore, we choose $\delta_0 = 0.01, \delta_1 = 0.3, \bar{r}=0.3, \hat{v} = 2*\textbf{1}, \sigma_n=\frac{1}{n+250}, c_n = \frac{1}{(n+100)^2}, \bar{\vartheta} = 0.0000005$ for Algorithms \ref{AL1} and \ref{AL2} while we  take $\lambda_n = \frac{2n+1}{111n+100}$ and $\gamma = 0.01$ for FRBSM \cite[Algorithm (2.2)]{MT} and RFBSM \cite[Algorithm (1.6)]{MT3}, respectively. The computational results are shown in Table \ref{im1}, Figures \ref{figim} and \ref{im2}.

 \begin{figure}
	\begin{center}
		\includegraphics[width=12cm]{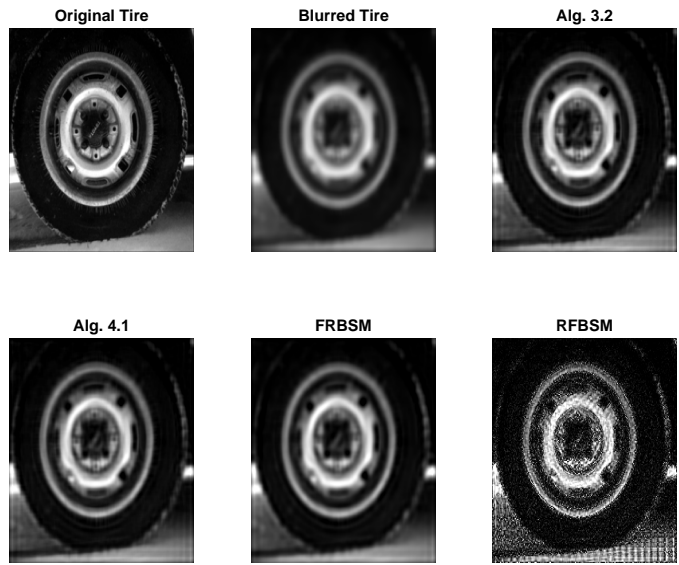}	
	\end{center}
	\caption{Numerical results for {\bf Example \ref{ex2*}:} Top Left: original image; Top Middle: blurred image; Top Right:  restored image by Algorithm \ref{AL1} with SNR = 24.7440; Bottom Left: restored image by Algorithm \ref{AL2} with SNR = 24.7440; Bottom Middle: restored image by FRBSM with SNR = 24.4121; Bottom Right: restored image by RFBSM with SNR = 16.6445.}\label{figim}
\end{figure}
\noindent

\begin{table}[h]
	\caption{{\bf Example \ref{ex2*}:} Numerical comparison of Algorithm \ref{AL1}, Algorithm \ref{AL2}, FRBSM and RFBSM using their SNR values.}
	\label{im1}
	\begin{tabular}{ p{2.5cm} p{2.5cm} p{2.5cm} p{2.5cm} p{2.5cm} p{2.5cm}}
		\hline
		Images & $n$ & Alg. \ref{AL1} & Alg. \ref{AL2} & FRBSM & RFBSM \\
		\hline
		Tire.tif   &   200 & 24.4142 & 24.4142 & 20.1744 & 13.8430 \\
		($205\times 232$)& 500 & 24.6659  & 24.6659 & 22.8918 & 16.5377\\
		& 1000 & 24.7202 & 24.7202 & 23.8513 & 16.7678	\\
		& 1500 & 24.7440 & 24.7440 & 24.4121 & 16.6445\\
		\hline
		\end{tabular}
\end{table}
\begin{figure}
	\begin{center}
		\includegraphics[width=8cm]{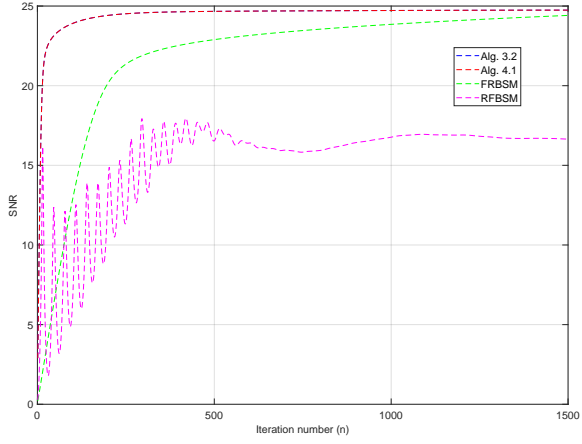}	
	\end{center}
	\caption{Numerical results for {\bf Example \ref{ex2*}:} Showing different SNR for each algorithms.}\label{im2}
\end{figure}
\end{example}

\hfill

\hfill

\subsection{Optimal Control Problem}${}$
\begin{example}\label{OCP}
\noindent  Let $\mathfrak{L}^2([0,\mathsf{T}],\mathcal{R}^l)$
be the Hilbert space of all square integrable, measurable vector functions
$z:[0,\mathsf{T}]\to\mathcal{R}^l$, where $0 < \mathsf{T} \in \mathcal{R}$.

\noindent We consider the optimal control problem
\begin{equation}\label{OCp}
    z^*(t)=\text{argmin}\{\phi(z):z\in \widetilde{Z}\}
\end{equation}
on the interval $[0,\mathsf{T}]$, where $\widetilde{Z}$ is the set of admissible controls and consists of continuous functions, that is,
$$\widetilde{Z}=\big\{z(t)\in \mathfrak{L}^2([0,\mathsf{T}],\mathcal{R}^l):z_j(t)\in[z_j^-,z_j^+],j=1,2,\ldots,l\big\},$$
 and the terminal objective has the form
$$\phi(z)=\varphi(w(\mathsf{T})),$$
where $\varphi$ is convex and differentiable on the attainability set and $w(t)$ is a trajectory in $\mathfrak{L}^2([0,\mathsf{T}])$.\\

\noindent
Assume that this trajectory satisfies the following constraints
$$\dot{w}(t)=\mathcal{P}(t)w(t)+\mathcal{Q}(t)z(t),\quad w(0)=w_0,\quad t\in[0,\mathsf{T}],$$
where $\mathcal{P}(t)\in\mathcal{R}^{d\times d}$ and $\mathcal{Q}(t)\in\mathcal{R}^{d \times l}$ are continuous matrices for $t\in[0,\mathsf{T}]$. \\
It follows from the Pontryagin Maximum Principle that there exists
$v^* \in \mathfrak{L}^2([0,\mathsf{T}])$ such that for almost all
$t\in[0,\mathsf{T}]$, $(w^*,v^*,z^*)$ solves the system
\begin{equation}\label{state}
\qquad~
\begin{cases}
\begin{aligned}
    \dot{w}^*(t)&=\mathcal{P}(t)w^*(t)+\mathcal{Q}(t)z^*(t)\\
          w^*(0)&=x_0,\\
\end{aligned}
\end{cases}
\end{equation}
\begin{equation}\label{costate}
\begin{cases}
\begin{aligned}
    \dot{v}^*(t)&=-\mathcal{P}(t)' v^*(t)\\
          v^*(\mathsf{T})&=\nabla \varphi(w(\mathsf{T})),\\
\end{aligned}
\end{cases}\quad~
\end{equation}
\begin{equation}\label{vip0}
    0\in \mathcal{Q}(t)' v^*(t)+\mathcal{N}_{\widetilde{Z}}(z^*(t)),
\end{equation}
where $\mathcal{Q}(t)'$ means the transpose of $\mathcal{Q}(t)$ and $\mathcal{N}_{\widetilde{Z}}(z)$ is the normal cone to $\widetilde{Z}$ at $z$, which is maximal monotone.
Let $Tz(t):=\mathcal{Q}(t)' v(t)$. Then $T$ is the gradient of  $\phi$ (see \cite{VuongShehu} and the references therein). Hence \eqref{vip0} reduces to the monotone inclusion problem \eqref{MIP}, where $S=\mathcal{N}_{\widetilde{Z}}$ and $T=\nabla \phi$.

\noindent For our computational experiments, we discretize the continuous functions and choose a natural number $K$ with the \emph{mesh size}
$h:=\mathsf{T}/K$. We identify any discretized control $z^K:=(z_0,z_1,\ldots,z_{K-1})$ with its piecewise constant extension
$$z^K(t)=z_j~\text{for}~t\in\left[t_j,t_{j+1}\right),~j=0,1,\ldots K.$$
\noindent Also, we  identify any discretized state
$w^K:=(w_0,w_1,\ldots,w_{K})$  with its piecewise linear interpolation
$$w^K(t)=w_j+\frac{t-t_j}{h}\left(w_{j+1}-w_j\right),~\text{for}~t\in\left[t_j,t_{j+1}\right),~j=0,1,\ldots,K-1$$
\noindent  and  identify any discretized co-state variable $v^K:=(v_0,v_1,\ldots,v_{K})$ in a similar manner.\\
Then we adopt the Euler method for the  discretization (see \cite{OC2,OC1,VuongShehu} for more details).

\noindent Now, consider the following example where the terminal function is
nonlinear \cite[Example~6.3]{BressanPiccoli}:
\begin{equation}\label{P5.7}
    \begin{array}{ll}
        \mbox{minimize }    & -w_1(2) +\left(w_2(2)\right) ^2 \\
        \mbox{subject to }  & \dot{w_1}(t)=w_2(t), \\
                            & \dot{w_2}(t)=z(t),~\forall t\in[0,2], \\
                            & w_1(0)=0,~ w_2(0)=0,\\
                            & z(t) \in [-1,1].
    \end{array}
\end{equation}

\noindent The exact optimal control for Problem \eqref{P5.7} is
$$z^*=\begin{cases}
\begin{aligned}
    &1\ && \mbox{if }\, t\in [0, 1.2)\\
   -&1  && \mbox{if }\, t\in (1.2,2].
\end{aligned}
\end{cases}$$
\\ For this example, we take $K=100$ and randomly choose the initial controls  in $[-1, 1]$. We also take  $\delta_0 = 0.1, \delta_1 = 0.3, \bar{r}=0.3,  \sigma_n=\frac{0.005}{3n+25000}, c_n = \frac{1}{(n^2+1)}, \bar{\vartheta} = 0.04$ for Algorithms \ref{AL1} and \ref{AL2} while we take $\lambda_n = \frac{n+1}{15n+10}$ and $\gamma = 0.075$ for FRBSM \cite[Algorithm (2.2)]{MT} and RFBSM \cite[Algorithm (1.6)]{MT3}, respectively. The stopping criterion for this experiment is $\text{Tol}_n < 10^{-4}$, where $\text{Tol}_n=0.5\|w_n - J^S(w_n - Tw_n)\|^2$. Note that ${\text Tol}_n=0$ implies that $w_n\in (T+S)^{-1}(0)$. The numerical results  for the experiment are shown in Figure \ref{Fig2**}.

\begin{figure}
			\begin{center}
				\includegraphics[height=5cm]{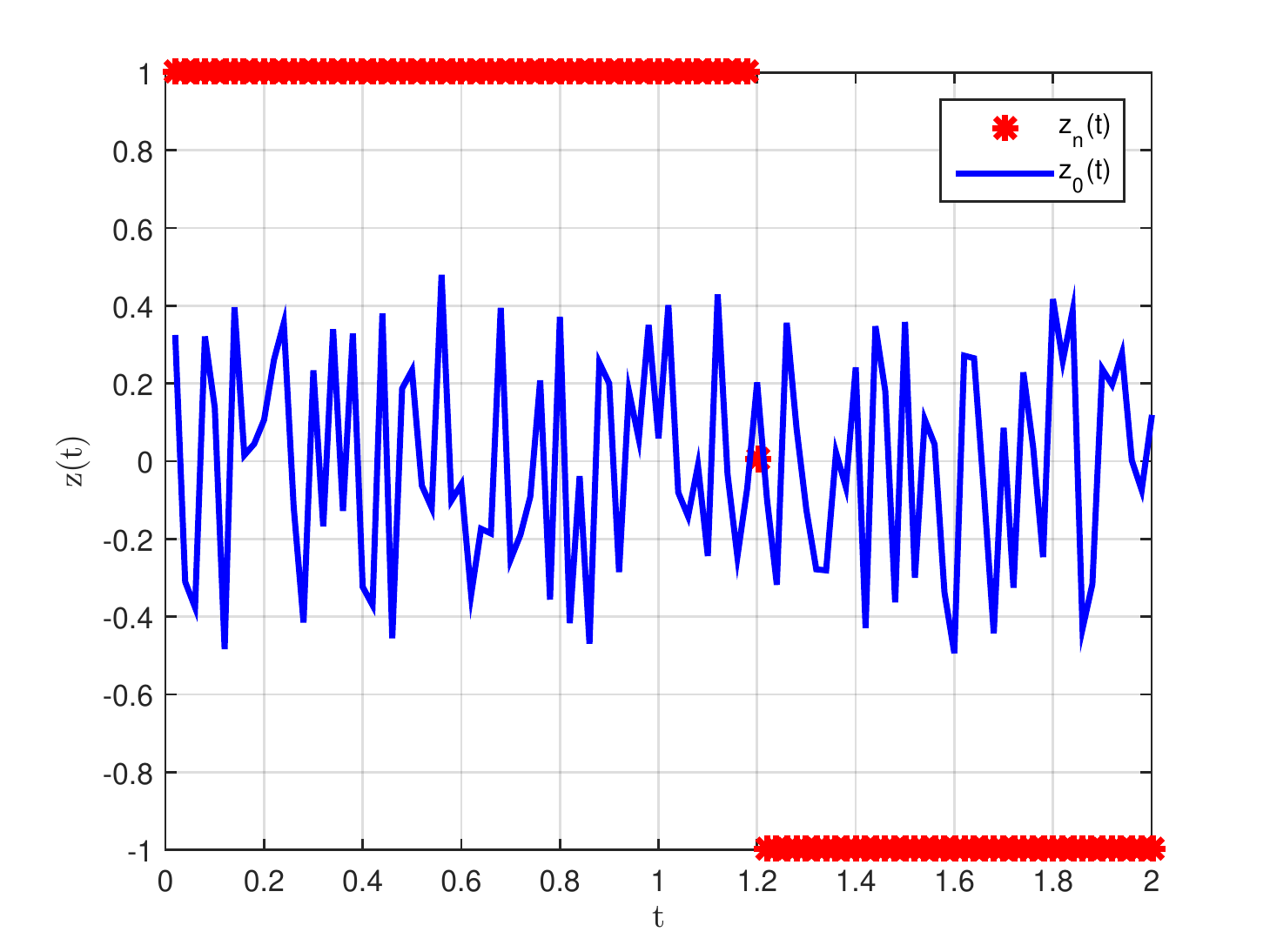}	
				\includegraphics[height=5cm]{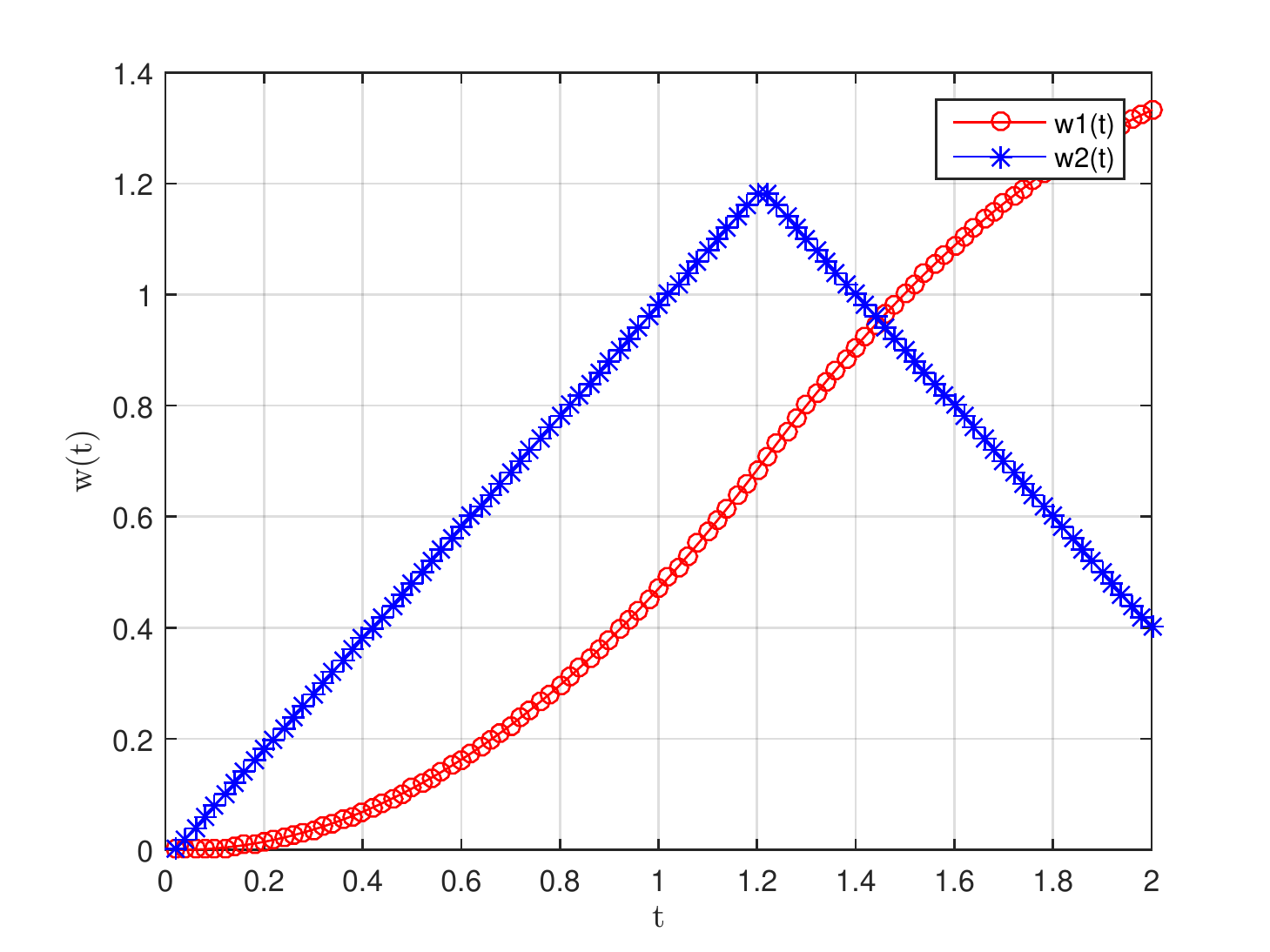}\\
							\includegraphics[height=5cm]{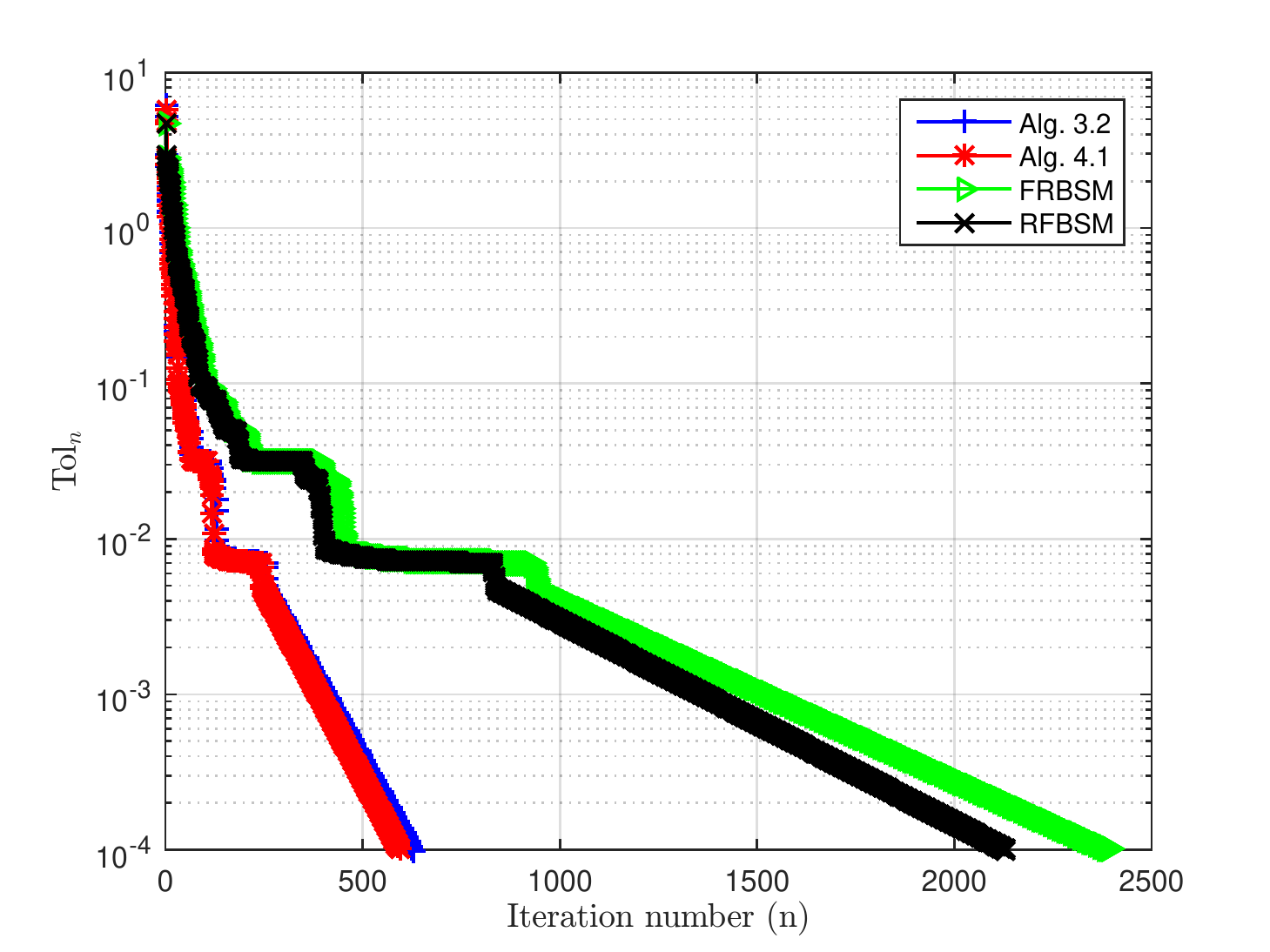}
			\end{center}
			\caption{Numerical results for {\bf Example \ref{OCP}:} Top Left: random initial control (blue) and optimal control (red); Top Right:  optimal trajectories (Top Left and Top Right computed by Algorithm \ref{AL1});
			Bottom: comparison of Algorithm \ref{AL1}, Algorithm \ref{AL2}, FRBSM and RFBSM with CPU time (sec), 1.5656, 1.3799, 4.1210 and 3.6200, respectively.}\label{Fig2**}
\end{figure}

\hfill

\end{example}

\subsection{Academic Examples}
\begin{example}\label{AE1}
Consider the following convex minimization problem:
\begin{equation}\label{CMP}
\min_{w\in \mathcal{R}^3}\|w\|^2_2 + (-7,1,-3)w + 4w_1 +\|w\|_1,
\end{equation}
where $w = (w_1, w_2, w_3)\in \mathcal{R}^3$.

\noindent
Set $\mathcal{P}w = \|w\|^2_2 + (-7,1,-3)w + 4w_1$ and $\mathcal{D}w = \|w\|_1$. Then $\mathcal{P}$ is convex and continuously differentiable on $\mathcal{R}^3$ with $\nabla \mathcal{P}w = 2w + (-3,1,-3)$, and $\mathcal{D}$ is subdifferentiable.\\
 Note that $\nabla \mathcal{P}$ is monotone and $2$-Lipschitz continuous, and $\partial \mathcal{D}$ is maximal monotone, where $\nabla \mathcal{P}$ and $\partial \mathcal{D}$ are the gradient and subdifferential of $\mathcal{P}$ and $\mathcal{D}$, respectively.\\
  Note also that this problem is equivalent to the following inclusion problem:
\begin{equation*}
0\in \left(\nabla \mathcal{P} + \partial \mathcal{D}\right)w.
\end{equation*}
It is known that
\begin{equation*}
J_{\bar{\delta}}^{S}(w)=(I_{\mathcal{R}^3}+ \bar{\delta}\partial \mathcal{D})^{-1}(w) = (\mu_1, \mu_2, \mu_3),
\end{equation*}
where
$$\mu_i = \text{sign}(w_i)\max\{|w_i|-\bar{\delta}, 0\}, i = 1, 2, 3.$$
Thus, setting $\nabla \mathcal{P} = T$ and $\partial \mathcal{D} = S$, we can employ our methods to solve the above convex minimization problem.\\
For the experiment of this example, we choose the same parameters as in Example \ref{OCP}. Furthermore, we choose  $\hat{v} = (2, 1, -6)$ and the initial values as follows:\\
\noindent {\it Case Ia:} $w_0 = (5, 1, -4), w_1 = (20, 4, 7)$;\\
\noindent {\it Case Ib:} $w_0 = (-7, 10, 3), w_1 = (90, -3, -4)$.

\noindent
The values of the $n$-th iterates $w_n$ for each choice of initial values is given in Figure \ref{Fig3a} and Table \ref{Tab3a} up to $100$ iterations with $D_n=0.5\|w_n - J^S(w_n - Tw_n)\|^2$.\\
Actually, the minimizer of the convex minimization problem \eqref{CMP} is $(1,0,1)$ (see Table \ref{Tab3a}). Thus, we set $\text{Tol}_n = 0.5\|w_n - (1,0,1)\|^2 $ and use the stopping criterion $\text{Tol}_n< 10^{-10}$. Furthermore, we plot the graph of $\text{Tol}_n$ against number of iterations in Figure \ref{Fig3b} with corresponding numerical reports in Table \ref{Tab3b}.

\begin{figure}
			\begin{center}
				\includegraphics[height=5cm]{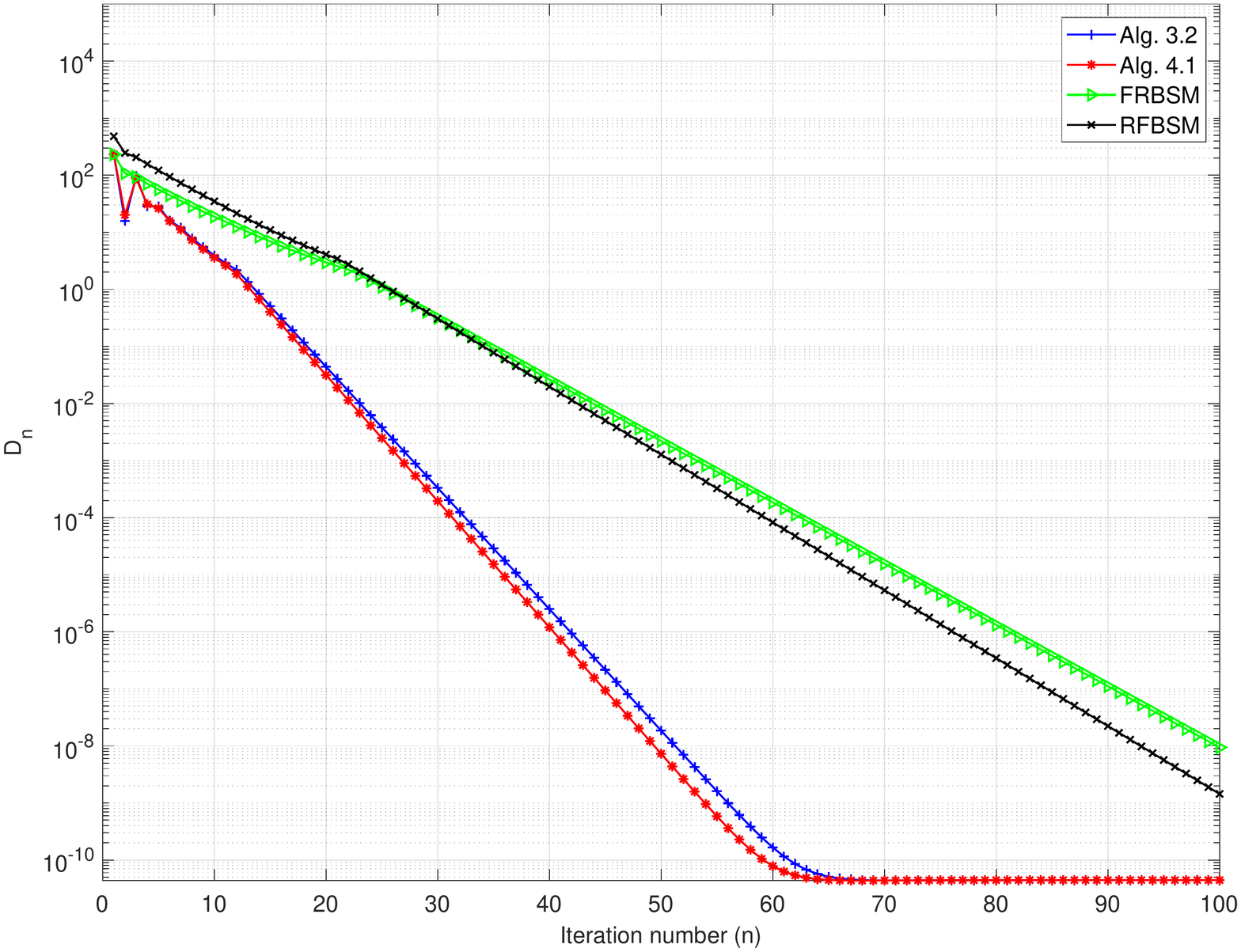}	
				\includegraphics[height=5cm]{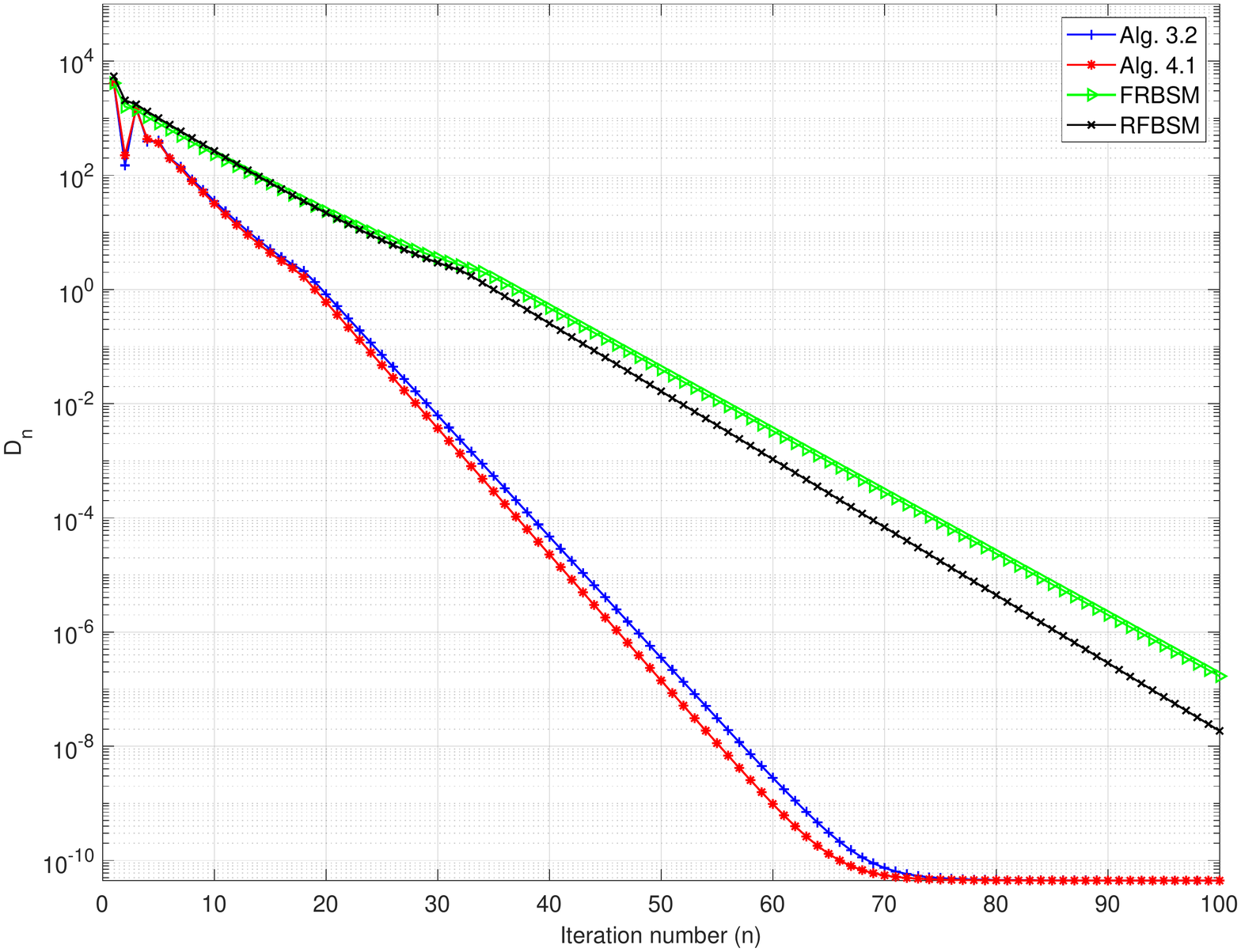}
			\end{center}
			\caption{The behavior of $D_n$ for {\bf Example \ref{AE1}:} Top Left: Case Ia; Top Right: Case Ib.}\label{Fig3a}
		\end{figure}

\begin{table}[h]
			\caption{Numerical results for {\bf Example \ref{AE1}:} Showing different values of the $n$-th iterates $w_n$.}
			\label{Tab3a}
			\begin{tabular}{ |p{1cm} |p{0.8cm}| p{3cm}| p{3cm}| p{3.6cm}|p{3.6cm}|}
			\hline
			\noindent & $n$ & Alg. \ref{AL1} &   Alg. \ref{AL2}& FRBSM & RFBSM \\
			\hline
			Ia  &   10 & (2.2949, 0, 1.3516) & (2.2850, 0, 1.3499) & (5.1660, 0.1442, 1.9988) & (5.1808, 0.1348, 2.0915) \\
			& 20& (1.1122, 0, 1.0304) & (1.1103, 0, 1.0300) & (2.1861, 0, 1.1284) & (2.0629, 0, 1.2775)\\
			& 50 & (1.0001, 0, 1.0000) & (1.0001, 0, 1.0000) & (1.0285, 0, 1.0068) & (1.0175, 0, 1.0046)\\
			& 70 & (1.0000, 0, 1.0000) & (1.0000, 0, 1.0000) & (1.0024, 0, 1.0006) & (1.0011, 0, 1.0003)\\
			& 100 & (1.0000, 0, 1.0000) & (1.0000, 0, 1.0000) & (1.0001, 0, 1.0000) & (1.0000, 0, 1.0000)\\
			\hline
			Ib  &   10 & (6.8208, 0, 0.8831) & (6.7805, 0, 0.8829) & (19.1621, 0, 0.6536) & (19.6069, 0, 0.6727) \\
			& 20 & (1.5042, 0, 0.9899) & (1.4961, 0, 0.9899) & (6.1708, 0, 0.9014) & (5.7306, 0, 0.9168)\\
			& 50 & (1.0003, 0, 1.0000) & (1.0003, 0, 1.0000) & (1.1243, 0, 0.9976) & (1.0777, 0, 0.9986)\\
			& 70 & (1.0000, 0, 1.0000) & (1.0000, 0, 1.0000) & (1.0105, 0, 0.9998) & (1.0050, 0, 0.9999)\\
			& 100 & (1.0000, 0, 1.0000) & (1.0000, 0, 1.0000) & (1.0003, 0, 1.0000) & (1.0000, 0, 1.0000)\\
			\hline
			\end{tabular}
		\end{table}

\begin{figure}
			\begin{center}
				\includegraphics[height=5cm]{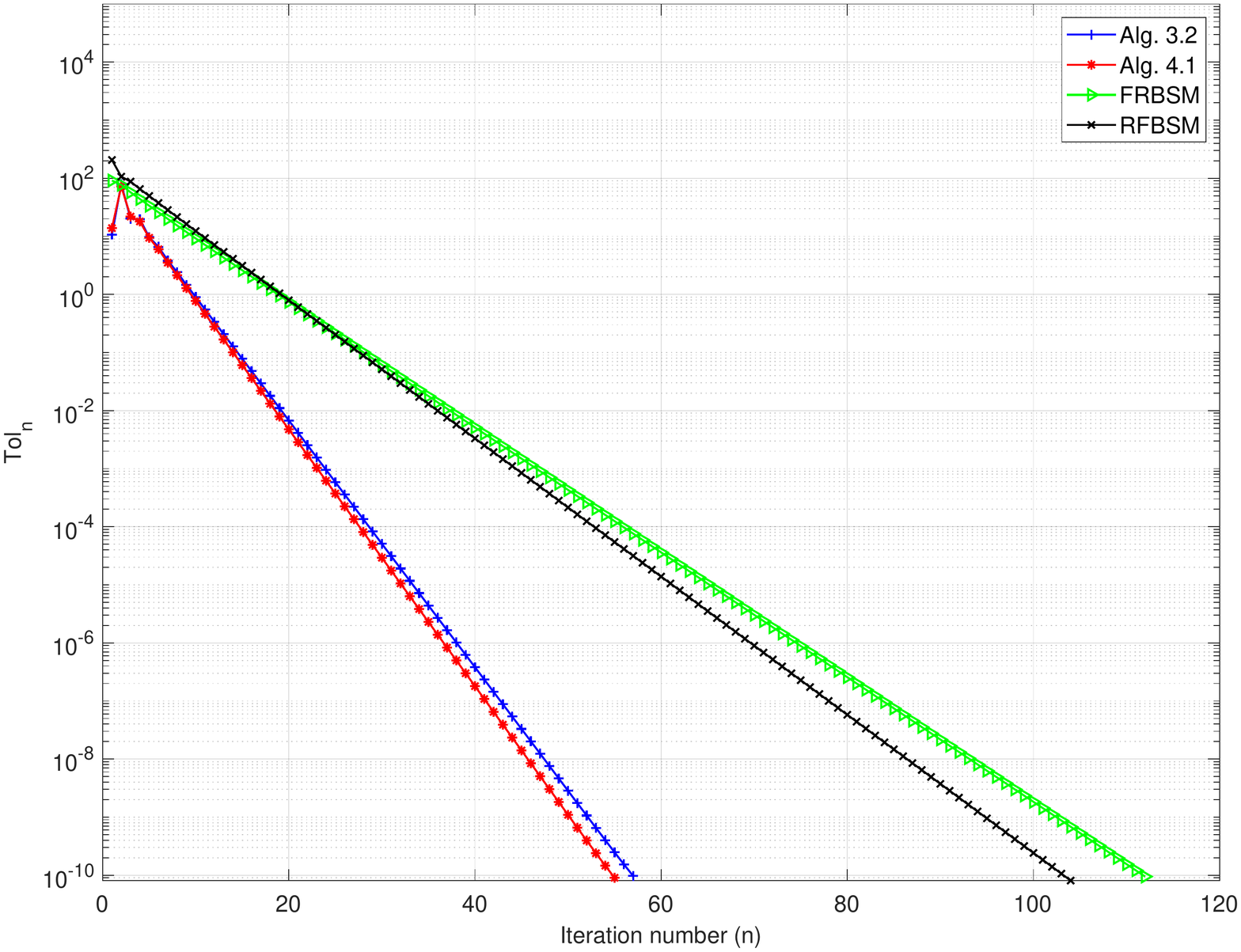}	
				\includegraphics[height=5cm]{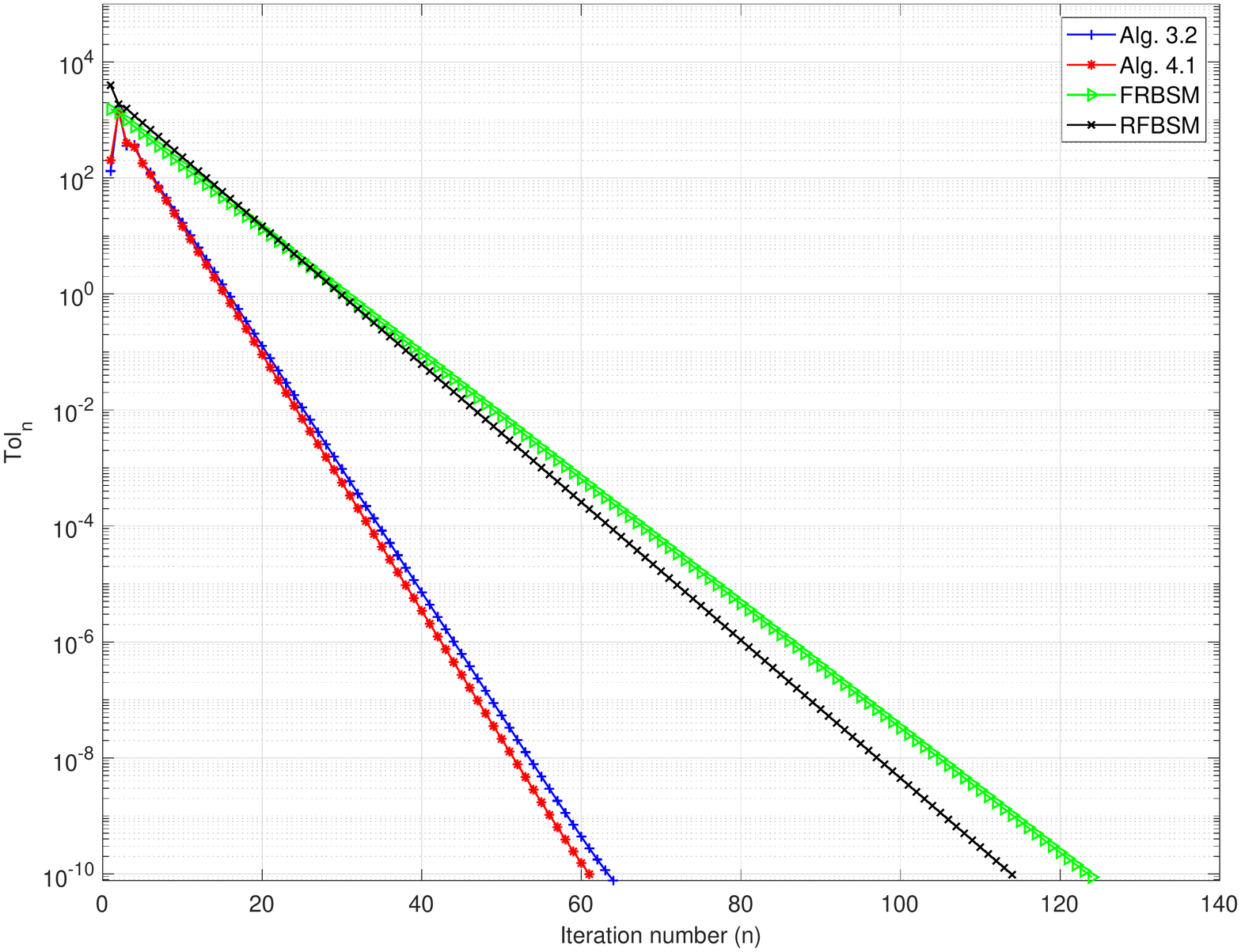}
			\end{center}
			\caption{The behavior of $\text{Tol}_n$ for {\bf Example \ref{AE1}:} Top Left: Case Ia; Top Right: Case Ib.}\label{Fig3b}
\end{figure}

\hfill

\begin{table}[h]
			\caption{Numerical results for {\bf Example \ref{AE1}} with $\text{Tol}_n< 10^{-10}$.}
			\label{Tab3b}
			\begin{tabular}{ |p{1.5cm} |p{2.5cm}| p{1.6cm}| p{1.8cm}| p{1.6cm}|p{1.6cm}|}
				\hline
				\noindent & \noindent & Alg. \ref{AL1} &   Alg. \ref{AL2} & FRBSM & RFBSM \\
				\hline
				Case Ia  &   CPU time (sec)  & 0.0024 & 0.0018 & 0.0024 & 0.0036 \\
				& No of Iter.  & 57 & 55 & 112 & 104 \\
				\hline
				Case Ib &   CPU time (sec)  & 0.0033 & 0.0024 & 0.0033 & 0.0790\\
				& No. of Iter. & 64 & 61 & 124 & 114\\
				\hline
	\end{tabular}
\end{table}
\end{example}

\hfill

 \begin{example}\label{EX2}
 Let $\mathcal{H}=\left(\mathfrak{l}_2(\mathbb{R}), ~\|. \|_{\mathfrak{l}_2}\right)$,
  where  $\mathfrak{l}_2(\mathbb{R}):=\{a=(a_1, a_2,...,a_i,...),~~a_i \in \mathbb{R} :\sum\limits_{i=1}^{\infty}|a_i|^2<\infty\}$   and   $||a||_{\mathfrak{l}_2} := \left( \sum\limits_{i=1}^\infty |a_i|^2 \right)^{\frac{1}{2}} \; ~~\forall a \in \mathfrak{l}_2(\mathbb{R}).$\\
    Let $S,T:\mathfrak{l}_2\to \mathfrak{l}_2$ be defined by
  $$Sa := \left(2a_1, 2a_2,...,2a_i,...\right), ~\forall a\in \mathfrak{l}_2$$
and
$$Ta := \left(\frac{a_1+|a_1|}{2}, \frac{a_2+|a_2|}{2},...,\frac{a_i+|a_i|}{2},...\right), ~\forall  a\in \mathfrak{l}_2.$$
Then $S$ is maximal monotone and $T$ is Lipschitz continuous and monotone with Lipschitz constant $\mathsf{L}=1$.

\noindent  We choose the following initial values:\\
 \noindent {\it Case IIa:} $w_0 = (2, -1, \frac{1}{2}, -\frac{1}{4}, \dots), w_1 = (\frac{2}{3}, \frac{1}{9}, \frac{1}{54}, \frac{1}{324},\dots)$;\\
 \noindent {\it Case IIb:} $w_0 = (4, 1, \frac{1}{4}, \frac{1}{16}, \dots), w_1 = (9, 3\sqrt{3}, 3, \sqrt{3}, \dots)$;\\
 \noindent {\it Case IIc:} $w_0 = (\frac{4}{3}, \frac{4}{9}, \frac{4}{27}, \frac{4}{81}, \dots), w_1 = (-2, 1, -\frac{1}{2}, \frac{1}{4}, \dots)$;\\
\noindent {\it Case IId:} $w_0 = (-4, 1, -\frac{1}{4}, \frac{1}{16}, \dots), w_1 = (20, -4, \frac{4}{5}, -\frac{4}{25}, \dots)$.

\noindent
Also we choose $\delta_0 = \frac{1}{101}, \delta_1 = \frac{2}{201}, \bar{r}=0.15, \hat{v} = (\frac{3}{2}, -\frac{3}{4}, \frac{3}{8}, -\frac{3}{16},\dots), \sigma_n=\frac{0.005}{3n+25000},  c_n = \frac{1}{(10n+77)^2}, \bar{\vartheta} = 0.04$ for Algorithms \ref{AL1} and \ref{AL2} while we  take $\lambda_n = \frac{n+1}{100n+101}$ and $\gamma = \frac{2}{201}$ for FRBSM \cite[Algorithm (2.2)]{MT} and RFBSM \cite[Algorithm (1.6)]{MT3}, respectively. The stopping criterion for this example is $\text{Tol}_n < 10^{-8}$, where $\text{Tol}_n=0.5\|w_n - J^S(w_n - Tw_n)\|^2$.  The numerical results  are shown in Figure \ref{Fig2} and Table \ref{Tab2}.
\begin{figure}
			\begin{center}
				\includegraphics[height=5cm]{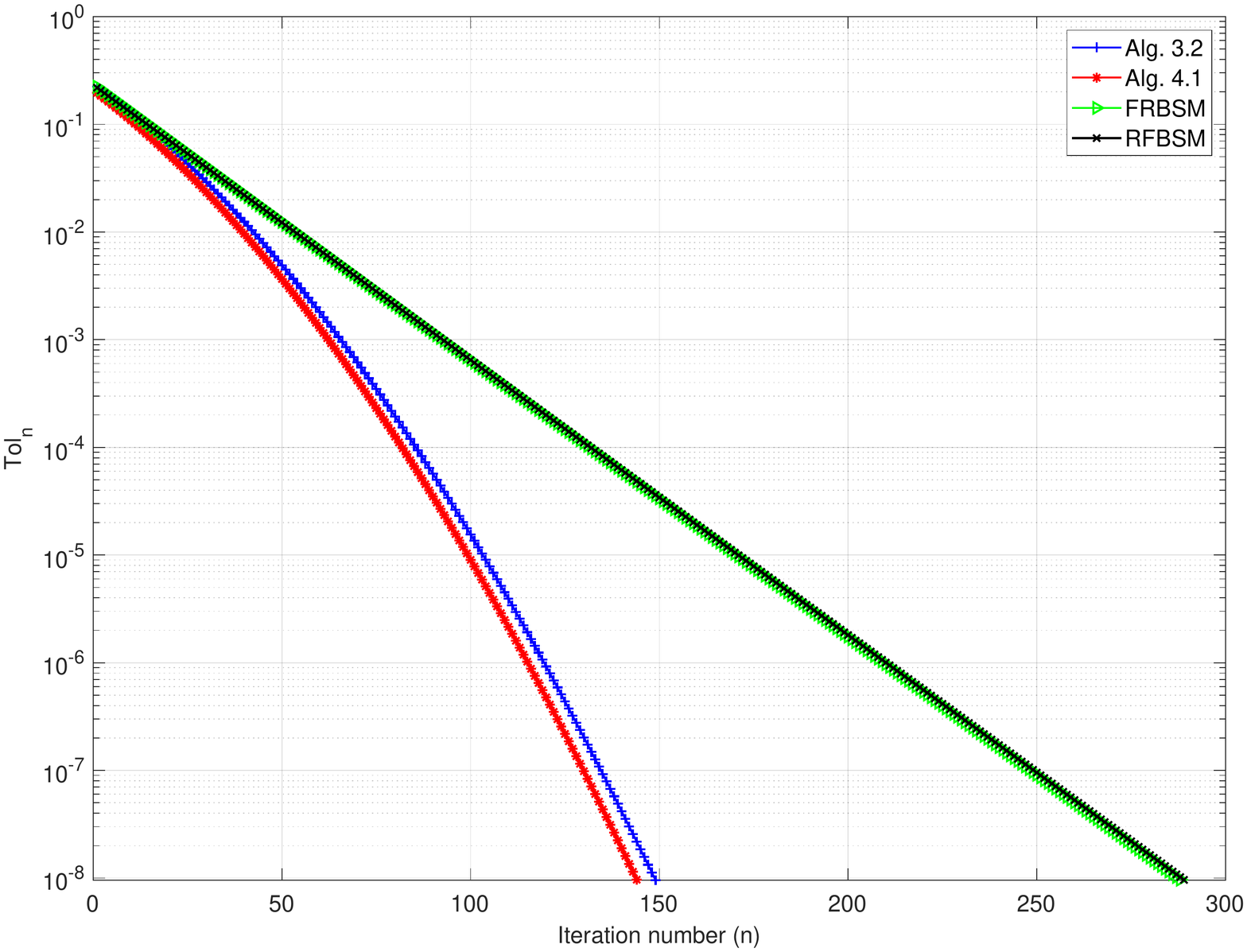}	
				\includegraphics[height=5cm]{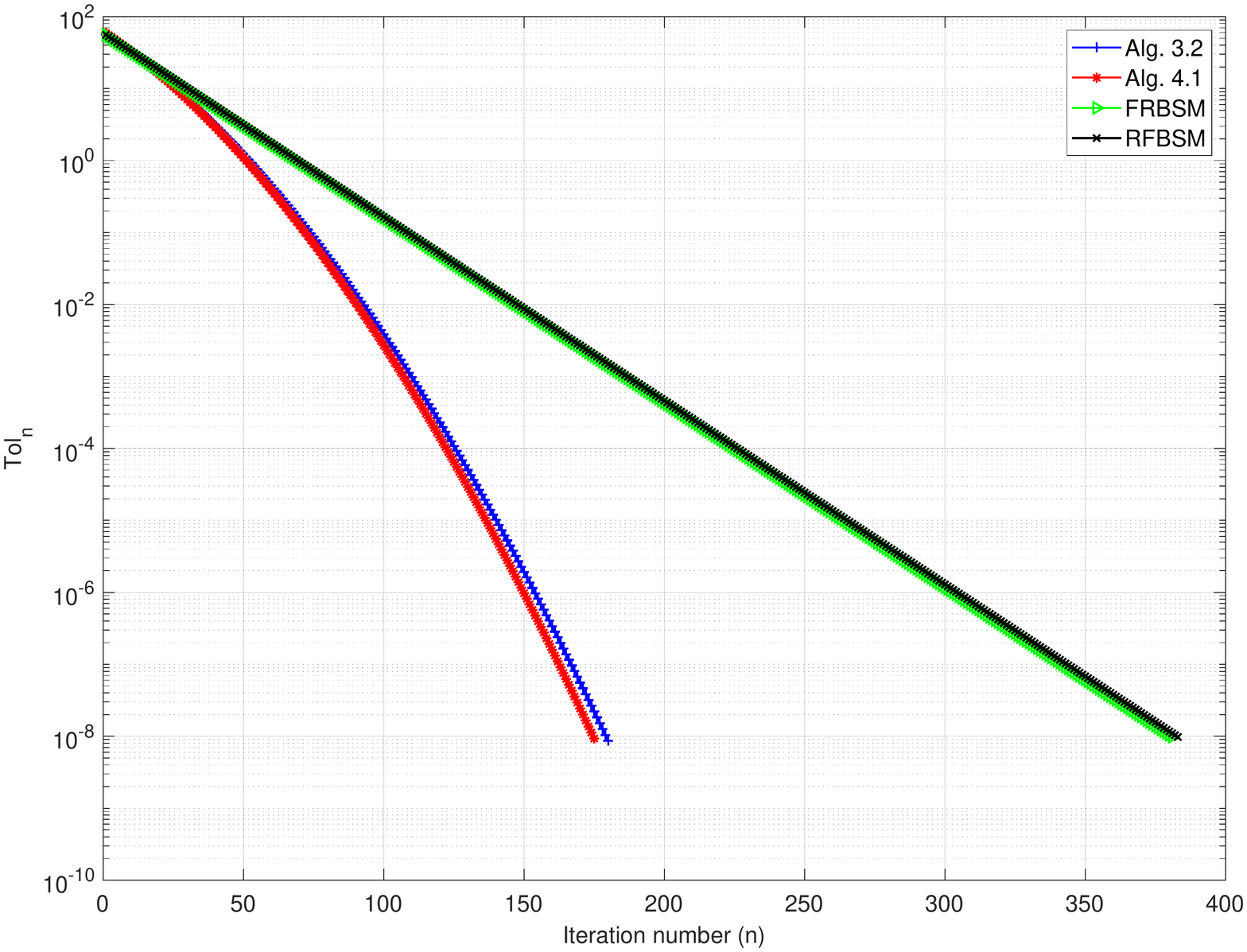}\\
				\includegraphics[height=5cm]{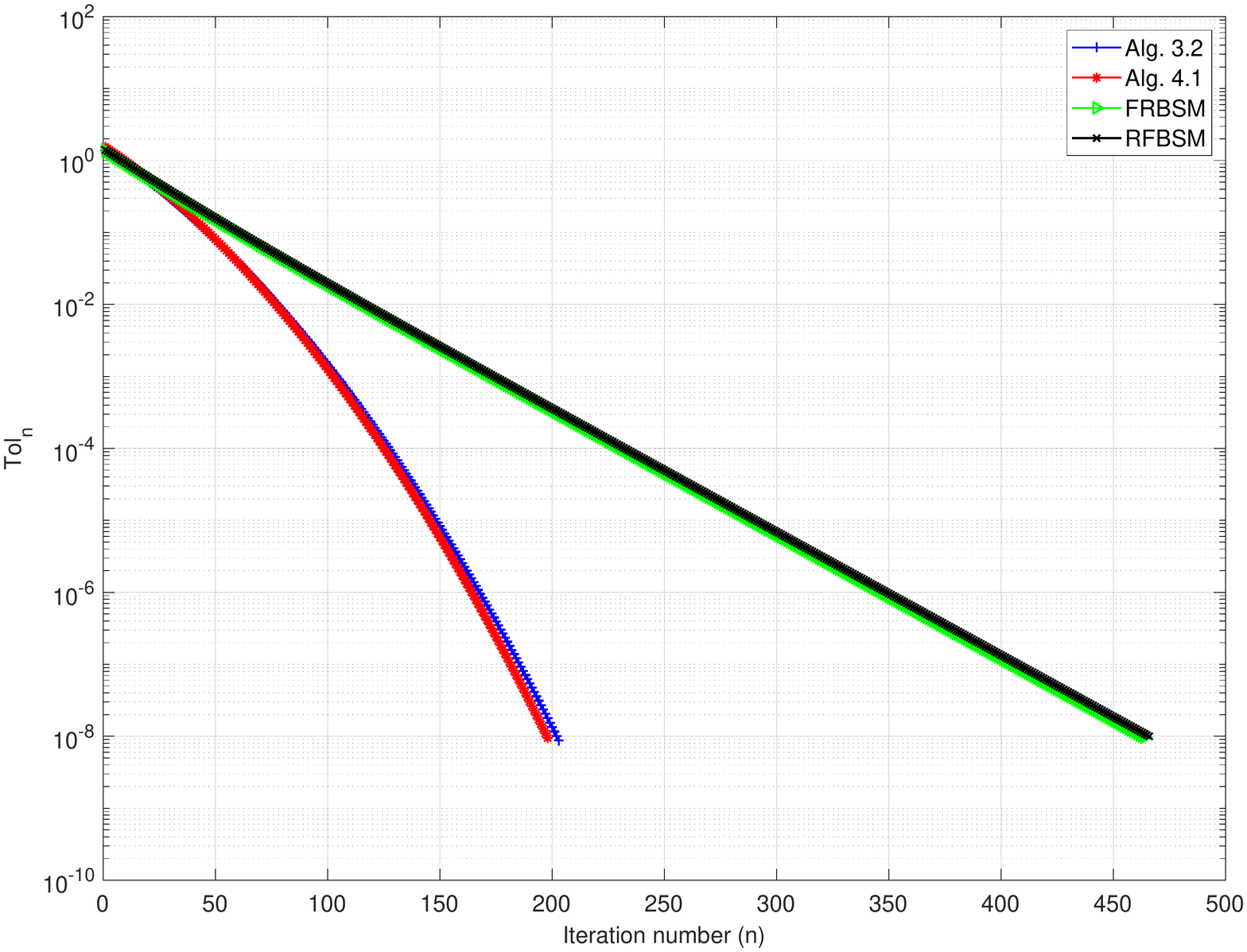}	
				\includegraphics[height=5cm]{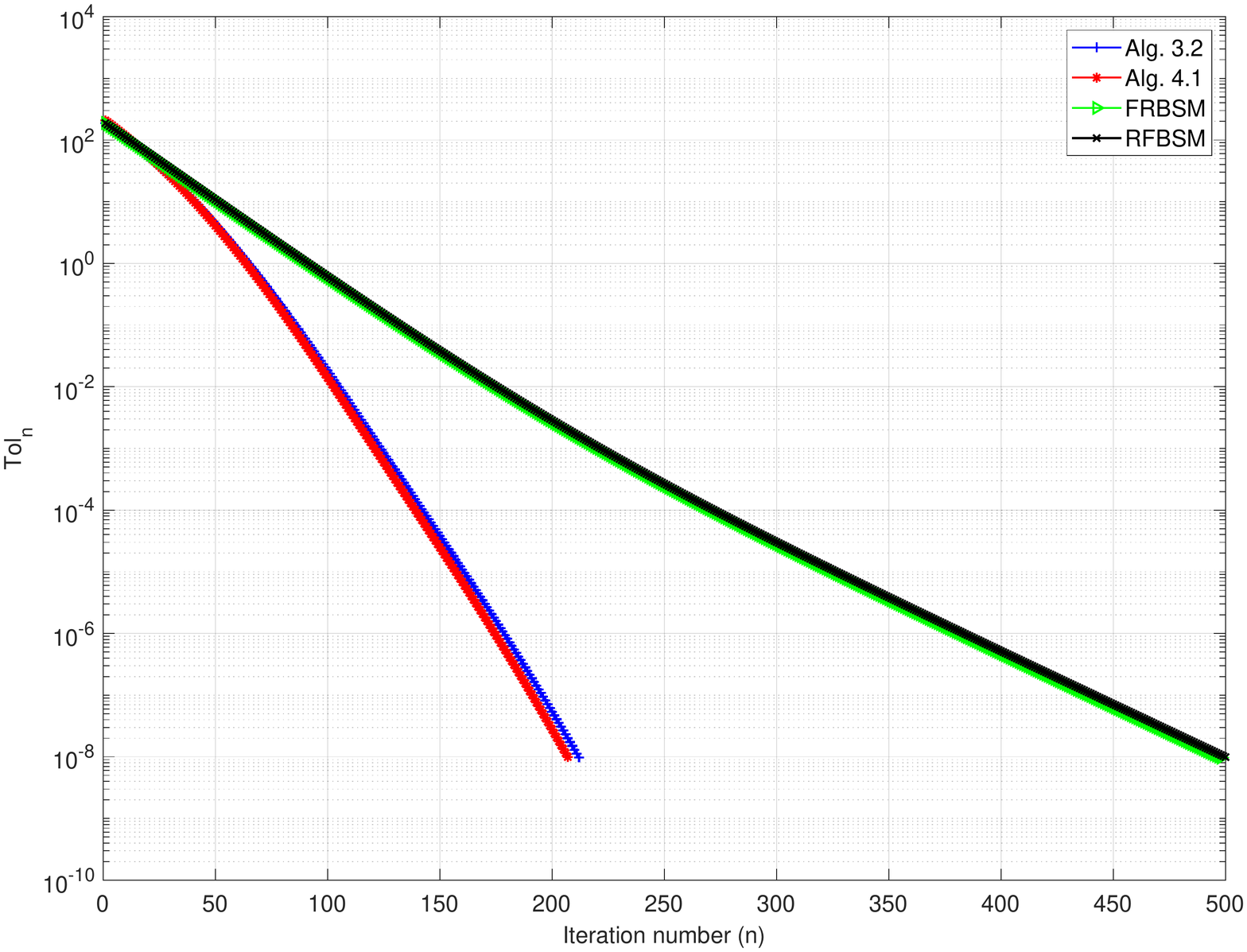}
			\end{center}
			\caption{The behavior of $\text{Tol}_n$ for {\bf Example \ref{EX2}:} Top Left: Case IIa; Top Right: Case IIb; Bottom Left:  Case IIc; Bottom Right: Case IId.}\label{Fig2}
\end{figure}

\begin{table}[h]
			\caption{Numerical results for {\bf Example \ref{EX2}} with $\text{Tol}_n< 10^{-8}$.}
			\label{Tab2}
			\begin{tabular}{ |p{1.5cm} |p{2.5cm}| p{1.6cm}| p{1.8cm}| p{1.6cm}|p{1.6cm}|}
				\hline
				\noindent & \noindent & Alg. \ref{AL1} &   Alg. \ref{AL2}& FRBSM & RFBSM \\
				\hline
				Case IIa  &   CPU time (sec)  & 0.0029 & 0.0027 & 0.0032 & 0.0035 \\
				& No of Iter.  & 149 & 149 & 288 & 289\\
				\hline
				Case IIb &   CPU time (sec)  & 0.0039 & 0.0034 & 0.0048 & 0.0041 \\
				& No. of Iter. & 180 & 175 & 381 & 383\\
				\hline
				Case IIc & CPU time (sec)& 0.0062 & 0.0059 & 0.0071 & 0.0076 \\
				& No of Iter. & 203 & 198 & 464 & 466\\
				\hline
				Case IId & CPU time (sec)  & 0.0021 & 0.0019 & 0.0025 & 0.0586\\
				& No of Iter. & 212 & 207 & 498 & 500\\
				\hline
			\end{tabular}
\end{table}
\end{example}

\section{Conclusion and future research} \label{Se6}
\noindent
We have proposed several new methods for solving the monotone inclusion problem \eqref{MIP} in a real Hilbert space. The first method{\color{blue},} which we called
a {\it forward-reflected-anchored-backward splitting method}{\color{blue},} inherits the attractive features of the forward-reflected-backward splitting method \eqref{FRB}, namely, it only involves one forward evaluation of the single-valued operator and one backward evaluation of the set-valued operator, and does not require the cocoercivity of the single-valued operator, but still converges strongly rather than weakly. The other methods of this paper are the inertial, viscosity and inertial viscosity variants of the first one. These variants share the same attractive features of the first method, and they also converge strongly.\\
Part of our future research is to study the rate of convergence of the proposed methods of this paper.  \\
It would be of interest to incorporate perturbations and error terms to these  methods because computing the resolvent of the set-valued operator may be difficult in some applications.\\
Finally, it would also be of interest to develop anchored (Halpern-type) and viscosity-type variants of the {\it \underline{G}olden \underline{RA}tio \underline{AL}gorithm} (GRAAL \cite{GRAAL}) for solving the monotone inclusion problem \eqref{MIP} and establish their strong convergence.

\hfill

\noindent
\textbf{Declarations}
 \vskip 0.2in

\noindent \textbf{Funding}: The second author was partially supported by the Israel Science Foundation (Grant 820/17), by the Fund for the Promotion of Research at the Technion and by the Technion General Research Fund.\\

\noindent
\textbf{Availability of data and material}: Not applicable.\\

\noindent
\textbf{Code availability}: The Matlab codes employed to run the numerical experiments are available upon request to
the authors.\\

\noindent
\textbf{Conflict of interest}: The authors declare that they have no known competing financial interests or personal relationships that could have appeared to influence the work reported in this paper.

	\end{document}